\newcommand{\tikzAngleOfLine}{\tikz@AngleOfLine}
\def\tikz@AngleOfLine(#1)(#2)#3{%
\pgfmathanglebetweenpoints{%
\pgfpointanchor{#1}{center}}{%
\pgfpointanchor{#2}{center}}
\pgfmathsetmacro{#3}{\pgfmathresult}%
}
\newcommand{\gap}{\hspace{1pt}}
\newcommand{\NN}{\mathbb{N}}
\renewcommand{\mod}{\text{\normalfont mod-}}
\DeclareMathOperator{\GKdim}{GKdim}
\DeclareMathOperator{\idim}{i.\hspace{-1pt}dim}
\DeclareMathOperator{\clKdim}{cl.\hspace{-1pt}Kdim}
\DeclareMathOperator{\height}{ht}
\DeclareMathOperator{\End}{End}
\DeclareMathOperator{\Spec}{Spec}
\DeclareMathOperator{\MaxSpec}{MaxSpec}
\DeclareMathOperator{\Hom}{Hom}
\DeclareMathOperator{\Frac}{Frac}
\DeclareMathOperator{\Ext}{Ext}
\DeclareMathOperator{\Stab}{Stab}
\DeclareMathOperator{\hilb}{hilb}
\DeclareMathOperator{\GL}{GL}
\DeclareMathOperator{\SL}{SL}
\DeclareMathOperator{\lcm}{lcm}
\DeclareMathOperator{\PIdeg}{PIdeg}
\DeclareMathOperator{\rank}{rank}
\DeclareMathOperator{\finsubgroup}{\hspace{1pt}\stackunder[0pt]{$\leqslant$}{$\scriptscriptstyle{\text{\normalfont{fin}}}$}\hspace{1pt}}
\newcommand{\hash}{\hspace{1pt} \# \hspace{1pt}}
\numberwithin{equation}{section}
\theoremstyle{definition}
\theoremstyle{plain}
\newtheorem{thm}[equation]{Theorem}
\newtheorem{prop}[equation]{Proposition}
\newtheorem{lem}[equation]{Lemma}
\newtheorem{cor}[equation]{Corollary}
\newtheorem*{thm*}{Theorem}
\theoremstyle{remark}
\newtheorem{rem}[equation]{Remark}
\newtheorem{example}[equation]{Example}
\definecolor{mygray}{gray}{0.8}
\newcommand\restr[2]{{
  \left.\kern-\nulldelimiterspace 
  #1 
  \right|_{#2} 
  }}
\newcounter{sarrow}
\newcounter{darrow}
\def\maketag@@@#1{\hbox{\m@th\normalfont\normalsize#1}}
\title[Azumaya Skew Group Algebras]{Azumaya Skew Group Algebras and an Application to Quantum Kleinian Singularities}
\author{Simon Crawford}
\address{School of Mathematics, University of Edinburgh, Edinburgh, EH9 3FD, UK}
\email{S.P.Crawford@sms.ed.ac.uk}
\date{\today}
\subjclass[2010]{14J17, 16H05, 16S35}
\begin{document}

\begin{abstract}
We provide easily-verified necessary and sufficient conditions for a skew group ring, or more generally, a crossed product ring, to be an Azumaya algebra. We use our results to show that (suitable localisations of) skew group rings associated to the quantum Kleinian singularities introduced in \cite{ckwz} are Azumaya, and use this to show that these algebras are maximal orders. We also give a new proof of Auslander's Theorem for quantum Kleinian singularities.
\end{abstract}

\maketitle

\section{Introduction}
Throughout, let $\Bbbk$ be an algebraically closed field of characteristic zero. In spite of the vast literature on Azumaya algebras and skew group algebras, given a specific Azumaya $\Bbbk$-algebra $A$ and an action by a finite group $G$, there is no known simple geometric criterion for the action of $G$ on $A$ which ensures that $A \hash G$ is Azumaya. While necessary and sufficient conditions for a skew group ring, or more generally a crossed product ring, to be Azumaya are given in \cite{alfaro, carvalho, paques}, these are often quite difficult to apply even in relatively basic examples. Additionally, some of the existing results in the literature require ambient hypotheses on both $A$ and the action of $G$ which are frequently not satisfied for specific skew group algebras which may be of interest. In particular, these hypotheses do not hold for a number of the \emph{quantum Kleinian singularities} (defined in Section \ref{preliminaries}) that were the original motivation for our work. One of the main purposes of this article is to provide easily-verified necessary and sufficient conditions for a skew group ring or crossed product ring to be Azumaya. Our first result considers crossed product rings where the action is $X$-outer:

\begin{thm}[Theorem \ref{outerazumaya}] \label{outerintro}
Consider a crossed product $T \coloneqq A*G$, where $A$ is a prime noetherian $\Bbbk$-algebra and where $G$ is a finite group. Suppose that the action of $G$ on $A$ is $X$-outer, and that $G$ acts as a group of $\Bbbk$-linear automorphisms on $Z(A)$. Then $T$ is prime noetherian. Moreover, $T$ is Azumaya if and only if
\begin{enumerate}[{\normalfont (1)},leftmargin=30pt,topsep=0pt,itemsep=0pt]
\item $A$ is Azumaya; and
\item $G$ acts freely on $Z(A)$; that is, the stabiliser of every maximal ideal of $Z(A)$ is trivial.
\end{enumerate}
If $T$ is Azumaya, then the rank of $T$ over its centre satisfies $\rank T = |G|^2 \rank A$.
\end{thm}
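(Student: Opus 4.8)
The plan is to pass from $T$ to its fibres over $\maxspec Z(T)$ and to control these through the $G$-action on $\maxspec Z(A)$. I first dispose of the structural claims. As $T$ is free of rank $|G|$ as a left $A$-module it is module-finite over the noetherian ring $A$, hence noetherian, and primeness is exactly the Montgomery--Passman criterion for a crossed product of a prime ring under an $X$-outer action. To compute the centre, write $\bar g\in T$ for the element implementing $g$ and $g\cdot a$ for the action, so $\bar g\,a=(g\cdot a)\bar g$. If $\sum_g a_g\bar g$ centralises $A$ then $b\,a_g=a_g\,(g\cdot b)$ for all $b$; for $g=1$ this forces $a_1\in Z(A)$, while for $g\neq1$ a nonzero $a_g$ would implement $g\cdot(-)$ as an $X$-inner automorphism of the prime ring $A$, contradicting $X$-outerness. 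Hence the centraliser of $A$ is $Z(A)\bar 1$, and intersecting with the centraliser of $G$ yields $Z(T)=Z(A)^G$. Write $Z\coloneqq Z(A)$ and $R\coloneqq Z^G$; the fibres of $\maxspec Z\to\maxspec R$ are the $G$-orbits.

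For the implication $(1),(2)\Rightarrow$ Azumaya I would use the standard fibre criterion: a prime noetherian PI algebra that is module-finite over its centre $R$ is Azumaya precisely when $T/\mathfrak M T$ is central simple over the residue field of $\mathfrak M$, of dimension $(\PIdeg T)^2$, for every maximal ideal $\mathfrak M$ of $R$. Since $\mathfrak M\subseteq R\subseteq Z\subseteq A$ one has $T/\mathfrak M T\cong(A/\mathfrak M A)*G$ and $A/\mathfrak M A\cong A\otimes_Z(Z/\mathfrak M Z)$. A free action makes $Z$ unramified over $R$ (indeed $Z/R$ is Galois with group $G$), so $Z/\mathfrak M Z\cong\Bbbk^{|G|}$ with $G$ permuting the factors regularly, and Azumaya-ness of $A$ gives $A/\mathfrak m_iA\cong M_n(\Bbbk)$, where $n\coloneqq\PIdeg A$ and $\rank A=n^2$. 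Hence $T/\mathfrak M T\cong M_n(\Bbbk)^{|G|}*G$ with $G$ regularly permuting the $|G|$ central idempotents; the identity-component idempotent $e$ satisfies $e(T/\mathfrak M T)e\cong M_n(\Bbbk)$ and is full because its $G$-translates sum to $1$, so Morita theory and a dimension count identify $T/\mathfrak M T\cong M_{|G|n}(\Bbbk)$. Thus every fibre is central simple of constant dimension $(|G|n)^2$, so $T$ is Azumaya with $\PIdeg T=|G|n$, whence $\rank T=(\PIdeg T)^2=|G|^2n^2=|G|^2\rank A$, giving the final formula.

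For the converse I would first descend the Azumaya property to $A$ using separability: if $T$ is Azumaya then it is separable over $R$, and because $T=\bigoplus_g A\bar g$ exhibits $A=A\bar 1$ as an $(A,A)$-bimodule direct summand, the standard summand result makes $A$ separable over $R$; separability then passes to the intermediate subring $Z$, so $A$ is separable over its own centre, i.e.\ Azumaya. Once $A$ is Azumaya I would invoke the key fact that every automorphism of $A$ fixing $Z$ pointwise extends to the central localisation $A\otimes_Z\Frac(Z)$, a central simple algebra over $\Frac(Z)$, where Skolem--Noether makes it inner; such an automorphism is therefore $X$-inner. Combined with $X$-outerness this forces the kernel of the $G$-action on $Z$ to be trivial, so the free locus of the $G$-action is dense and $\PIdeg T=|G|n$ by the previous paragraph. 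Finally, for an arbitrary $\mathfrak m\in\maxspec Z$ with stabiliser $H\coloneqq G_{\mathfrak m}$ (which is the inertia group of the unique irreducible $A$-module at $\mathfrak m$), Clifford theory gives irreducible $T$-modules over the orbit of $\mathfrak m$ of dimension $[G:H]\,n\,\delta$, where $\delta$ is the dimension of an irreducible module of a twisted group algebra $\Bbbk_\tau[H]$ and hence $\delta\le\sqrt{|H|}$; equality with $\PIdeg T=|G|n=[G:H]\,|H|\,n$ would force $\delta=|H|$, which is impossible unless $|H|=1$. Thus $G$ acts freely, establishing $(2)$.

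The step I expect to be the real obstacle is the converse, and specifically reconciling the global $X$-outerness with the fact that on each residue algebra $A/\mathfrak m A\cong M_n(\Bbbk)$ every automorphism is inner by Skolem--Noether: the obstruction to being Azumaya cannot be detected from outerness fibrewise and must instead be extracted, on the one hand, from the separability descent that yields $A$ Azumaya, and on the other from the representation theory of the stabiliser via the bound $\delta\le\sqrt{|H|}$. Making these precise---verifying the bimodule-summand descent of separability, the passage from $X$-outerness to a free action through the central-localisation argument, and the exact Clifford dimension formula (including how the crossed-product cocycle restricts to $H$)---is where the substance of the proof lies; the forward direction and the rank formula then reduce to the explicit matrix computation above.
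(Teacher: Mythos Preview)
Your proposal is correct and shares the same architecture as the paper's proof: compute $Z(T)=Z(A)^G$ from $X$-outerness, use separability descent to get $A$ Azumaya in the forward direction, and control fibres $T/\mathfrak M T$ via the Artin--Procesi criterion. The paper likewise deduces faithfulness of the $G$-action on $Z(A)$ from the Skolem--Noether/$X$-inner observation you describe, and then argues by dimension.

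The execution differs in two places. For $(\Leftarrow)$, the paper does not compute $T/\mathfrak M T$ explicitly; instead it invokes Passman's prime correspondence for crossed products (Corollary~14.8 and Theorem~14.7 of \cite{passman}) to show directly that $\mathfrak M T$ is a \emph{maximal} ideal of $T$, after first proving a lemma (the content of your ``$Z/R$ is Galois'' step) that $\mathfrak M Z(A)$ is $G$-maximal. Your explicit identification $M_n(\Bbbk)^{|G|}*G\cong M_{|G|n}(\Bbbk)$ via the full idempotent is a cleaner and more self-contained route to the same conclusion. For $(\Rightarrow)(2)$, the paper avoids Clifford theory: having shown $T$ Azumaya of rank $r^2$ and $A$ Azumaya, it observes that for any $\mathfrak n\in\MaxSpec Z(A)$ the quotient $A/\bigl(\bigcap_g g\cdot\mathfrak n\bigr)$ has $\Bbbk$-dimension $r^2/|G|$, and on the other hand is a product of $|G|/|\Stab_G\mathfrak n|$ copies of $M_\ell(\Bbbk)$ where $\ell^2=\rank A$; equating these forces $|\Stab_G\mathfrak n|=1$ once a single free point pins down $\ell$. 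Your Clifford bound $\delta\le\sqrt{|H|}$ reaches the same contradiction by bounding irreducible $T$-dimensions instead. Both arguments are short; the paper's has the slight advantage of simultaneously delivering the rank formula $\rank T=|G|^2\rank A$ without a separate generic-fibre computation.
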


\indent If $A$ is commutative and $T$ is a skew group ring, this can be stated more concisely as follows:

\begin{cor} \label{corollaryintro}
Consider a skew group ring $T \coloneqq A \hash G$, where $A$ is a commutative noetherian $\Bbbk$-algebra which is a domain and where $G$ is a finite group. Then $T$ is prime noetherian. Moreover, $T$ is Azumaya if and only if $G$ acts freely on $A$, and in this case, $\rank T = |G|^2$.
\end{cor}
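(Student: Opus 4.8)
The plan is to deduce this corollary directly from Theorem~\ref{outerintro} by specialising to the case where $A$ is commutative. A skew group ring $A \hash G$ is precisely a crossed product $A * G$ with trivial twisting, so Theorem~\ref{outerintro} will apply once I verify its standing hypotheses: that $A$ is prime noetherian, that $G$ acts by $\Bbbk$-linear automorphisms on $Z(A)$, and that the action of $G$ on $A$ is $X$-outer. The first is immediate, since a commutative domain is prime and $A$ is noetherian by assumption. The second is automatic: because $A$ is commutative we have $Z(A) = A$, and $G$ already acts on $A$ by $\Bbbk$-algebra automorphisms.

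The one hypothesis requiring genuine verification, and the step I expect to be the main obstacle, is that the action is $X$-outer. Here I would use that the symmetric Martindale quotient ring of the commutative domain $A$ is simply its field of fractions $K = \Frac(A)$. Since $K$ is commutative, its only inner automorphism is the identity, so an automorphism of $A$ is $X$-inner precisely when the induced automorphism of $K$ is inner, i.e.\ the identity; hence precisely when it is the identity on $A$. Granting that the action of $G$ is faithful, it follows that no nontrivial element of $G$ induces an $X$-inner automorphism, so the action is $X$-outer. (Faithfulness is in any case forced by the primeness claim for $T$: a nontrivial kernel $N$ would make $A \hash G$ contain a copy of $A \otimes_\Bbbk \Bbbk N$, which is not prime, so a skew group ring satisfying the conclusion must have faithful action.)

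With the hypotheses in place, Theorem~\ref{outerintro} yields that $T$ is prime noetherian and that $T$ is Azumaya if and only if conditions (1) and (2) hold; it then remains to simplify these. Condition (2), that $G$ act freely on $Z(A)$, reads directly as $G$ acting freely on $A$ since $Z(A) = A$. Condition (1), that $A$ be Azumaya, is automatic, as any commutative ring is an Azumaya algebra over itself of rank $1$. Finally, the rank formula $\rank T = |G|^2 \rank A$ collapses to $\rank T = |G|^2$, again because $\rank A = 1$ for the commutative ring $A$. This gives exactly the statement of the corollary.
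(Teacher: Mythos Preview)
Your proof is correct and follows exactly the approach the paper takes: the remark after Theorem~\ref{outerazumaya} simply observes that for a skew group ring the condition that $G$ act as a group on $Z(A)$ is automatic, and that commutativity of $A$ forces the action to be $X$-outer, whence Corollary~\ref{corollaryintro} follows from Theorem~\ref{outerintro} with $\rank A = 1$. Your handling of the implicit faithfulness assumption is in fact more careful than the paper's one-line deduction; the only imprecision is in the parenthetical, where containing a non-prime subring does not by itself prevent primeness---the cleaner argument is that $\tfrac{1}{|N|}\sum_{n\in N} n$ is a nontrivial central idempotent of $A \hash G$ whenever the kernel $N$ is nontrivial.
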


In our second main result, we consider the case of a skew group ring $A \hash G$ where a cyclic group $G$ acts by inner automorphisms:

\begin{thm}[Theorem \ref{innerazumaya}] \label{innerintro}
Let $A$ be a prime $\Bbbk$-algebra and let $G$ be a cyclic group acting inner on $A$. Suppose also that $T = A \hash G$ is prime. Then $A$ is Azumaya if and only if $T$ is Azumaya, and in this case they have the same rank over their centres.
\end{thm}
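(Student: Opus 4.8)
The plan is to exploit the cyclicity of $G$ to rewrite $T$ as a tensor product over centres, and then reduce to a question of faithfully flat base change. Write $G = \langle g \rangle$ with $|g| = n$, and suppose the inner action is conjugation by a unit $u \in A$, so that $gag^{-1} = uau^{-1}$ in $T$. The first step is to introduce $w \coloneqq u^{-1}g \in T$ and check that $w$ centralises $A$, since $w a w^{-1} = u^{-1}(g a g^{-1})u = a$ for all $a \in A$. As $g = uw$ and $w$ commutes with $u \in A$, one gets $g^i = u^i w^i$, whence $T = \bigoplus_{i=0}^{n-1} A g^i = \bigoplus_{i=0}^{n-1} A w^i$ is free over $A$ with central basis $1, w, \dots, w^{n-1}$. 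Because the action has order $n$, the automorphism $a \mapsto u^n a u^{-n}$ is trivial, so $u^n \in Z(A)$ and hence $z \coloneqq w^n = u^{-n}$ is a central unit of $A$. I would then record that $w$ is central in $T$ and that $Z(T) = Z(A)[w]$, so that, setting $R \coloneqq Z(A)$ and $S \coloneqq Z(T)$, there is an isomorphism of $R$-algebras
\[
T \;\cong\; A[x]/(x^n - z) \;\cong\; A \otimes_R S, \qquad S \cong R[x]/(x^n - z),
\]
with $S$ free of rank $n$ over $R$. (Here cyclicity of $G$ is essential, as it produces a single generator and a single conjugating unit; primeness of $T$ guarantees that $S$ is a domain, so $\Frac(S)$ is an $n$-dimensional field extension of $\Frac(R)$.)

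Granting this reduction, the theorem becomes a statement about the finite free, hence faithfully flat, extension $R \hookrightarrow S$. For the forward direction, if $A$ is Azumaya over $R$ then its base change $A \otimes_R S = T$ is Azumaya over $S$, and $Z(A \otimes_R S) = Z(A) \otimes_R S = S = Z(T)$, so $T$ is Azumaya over its centre. The converse is the crux: I need to \emph{descend} the Azumaya property along the faithfully flat map $R \to S$. Concretely, from $T = A \otimes_R S$ being Azumaya over $S$ I would deduce, first, that $A$ is finitely generated projective over $R$ using faithfully flat descent of finite projectivity, and second, that the canonical map $A \otimes_R A^{\op} \to \End_R(A)$ is an isomorphism, by checking it after the faithfully flat base change $- \otimes_R S$, where it becomes the corresponding bijective map for the Azumaya algebra $T$ over $S$. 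This descent step is where I expect the main difficulty to lie, and I would either carry it out directly or invoke the standard faithfully flat descent theory for Azumaya algebras (as developed by Knus--Ojanguren and DeMeyer--Ingraham).

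Finally, the equality of ranks is immediate from $T \cong A \otimes_R S$, since the rank of a finitely generated projective module is preserved under base change, giving $\rank_S T = \rank_R A$. Alternatively, passing to the generic fibre, $T \otimes_S \Frac(S) \cong \bigl(A \otimes_R \Frac(R)\bigr) \otimes_{\Frac(R)} \Frac(S)$ is the scalar extension of the central simple algebra $A \otimes_R \Frac(R)$, hence has the same dimension over $\Frac(S)$ as the latter has over $\Frac(R)$; thus $\PIdeg T = \PIdeg A$ and the two ranks coincide.
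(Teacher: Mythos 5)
Your proof is correct, but it takes a genuinely different route from the paper's. You establish the structural isomorphism $T \cong A \otimes_{Z(A)} Z(T)$, with $Z(T) = Z(A)[w] \cong Z(A)[x]/\langle x^n - u^{-n} \rangle$ for the central unit $w = u^{-1}g$, and then settle both directions at once by ascent and faithfully flat descent of the Azumaya property along the finite free extension $Z(A) \hookrightarrow Z(T)$. The paper instead works fibrewise: it computes $Z(T) = Z(A)[(\eta^{-1}g)^{\pm 1}]$ (Lemma \ref{innercentre} --- the same element $\eta^{-1}g$ as your $w$), and then applies the Artin--Procesi criterion in the form of Lemma \ref{azumayalem}, constructing at each maximal ideal an explicit isomorphism $T/\mathfrak{m}T \cong A/(\mathfrak{m}\cap Z(A))A \cong M_d(\Bbbk)$ in the forward direction (by rewriting $g = (g-\lambda\eta) + \lambda\eta$, where $\eta^{-1}g - \lambda \in \mathfrak{m}$) and a surjection $T/\mathfrak{m}'T \to A/\mathfrak{m}A$, $ag^i \mapsto a(\eta\beta)^i + \mathfrak{m}A$, in the converse; those residue computations are precisely the fibrewise shadows of your tensor decomposition. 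What each approach buys: the paper's argument is self-contained modulo standard PI theory, but it needs primeness (to invoke Artin--Procesi), $\Bbbk$ algebraically closed, and implicitly that the algebras are finitely generated over $\Bbbk$ (this is used when asserting that $\mathfrak{m}$ contains an element $\eta^{-1}g - \lambda$ with $\lambda \in \Bbbk$, a hypothesis not actually in the theorem statement); your descent argument dispenses with all of these, using primeness only so that ``the rank'' is a single well-defined number, at the cost of importing descent theory for Azumaya algebras (Knus--Ojanguren, DeMeyer--Ingraham), exactly as you flag. Two small points you should make explicit in a final write-up: the equality $Z(A \otimes_R S) = Z(A) \otimes_R S$ needs justification, and follows by comparing coefficients against the $A$-basis $1, w, \dots, w^{n-1}$ --- this is in substance the computation of the paper's Lemma \ref{innercentre} --- and in the converse you must descend finite projectivity of $A$ over $R$ \emph{before} identifying $\End_R(A) \otimes_R S$ with $\End_S(T)$, which is indeed the order of operations you state.
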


Our main application of these results is to \emph{quantum Kleinian singularities}, which were introduced in \cite{ckwz} and which we now briefly describe. \\
\indent In \cite{ckwz14}, the authors classified all pairs $(A,G)$ where $A$ is an AS-regular algebra of global dimension 2, and $G$ is a finite group acting faithfully on $A$ and with trivial homological determinant. These conditions ensure that these actions can be thought of as noncommutative analogues of the action of a finite subgroup of $\SL(2,\Bbbk)$ on $\Bbbk[u,v]$. Table 1 in Section \ref{preliminaries} lists all of the possibilities for the pairs $(A,G)$. Following \cite{ckwz}, we refer to the invariant rings $A^G$ as \emph{quantum Kleinian singularities}. It is natural to ask to what extent the results of the classical McKay correspondence for Kleinian singularities hold in the quantum setting. One such classical result is Auslander's Theorem, which says that if $G$ is a finite subgroup of $\SL(2,\Bbbk)$ acting on $R = \Bbbk[u,v]$, then there is a natural graded isomorphism $\End_{R^G}(R) \cong R \hash G$. An analogue of Auslander's Theorem was shown to hold for quantum Kleinian singularities in \cite{ckwz}, and quantum analogues of other results in the classical McKay correspondence have been established in \cite{ckwz2}. \\
\indent Our original motivation was to generalise other results for Kleinian singularities to the quantum setting, namely those found in \cite{cbh}.  A relatively innocuous result in [loc. cit.] is that (a deformation of) the skew group ring $\Bbbk[u,v] \hash G$, where $G$ is a finite subgroup of $\SL(2,\Bbbk)$, is a maximal order. In the classical setting considered in [loc. cit.], this result follows quickly from results in the literature, but the analogous result in the quantum setting is more difficult to establish. Our approach involves showing that suitable localisations of the skew group ring $A \hash G$ are Azumaya, and this necessitated proving Theorems \ref{outerintro} and \ref{innerintro}. Using this fact, we prove the following:

\begin{thm}[Theorem \ref{maxorderthm}, Corollary \ref{invmaxorder}] \label{maxintro}
Suppose that $A^G$ is a quantum Kleinian singularity. Then $A \hash G$ and $A^G$ are maximal orders.
\end{thm}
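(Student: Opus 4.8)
The plan is to deduce the maximal order property from the standard local--global principle for maximal orders: a prime noetherian algebra $\Lambda$ that is module-finite over a normal (integrally closed) noetherian centre, that is reflexive as a module over that centre, and whose localisation at every height-one prime of the centre is a maximal order, is itself a maximal order. First I would record that each algebra $A$ appearing in Table 1 is module-finite over its centre $Z(A)$ (this is forced anyway, since the whole Azumaya strategy requires $A\hash G$ to be module-finite over its centre), so that $T \coloneqq A\hash G$ is module-finite over $Z \coloneqq Z(T)$, and that $T$ is prime noetherian by Theorem \ref{outerintro} (or Theorem \ref{innerintro} for the components on which $G$ acts by inner automorphisms). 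I would then identify $Z$ and check that it is a normal domain, using that $A$ is AS-regular, so that $Z(A)$ is a normal domain, together with the description of $Z$ as an invariant-type subring of $Z(A)$.

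The key input is that $T$ is Azumaya away from a closed subset of $\Spec Z$ of codimension at least two, so that $T_{\mathfrak{p}}$ is Azumaya, and hence, over the normal one-dimensional local ring $Z_{\mathfrak{p}}$, a maximal order, for every height-one prime $\mathfrak{p}$. By Theorem \ref{outerintro} (and Theorem \ref{innerintro} in the inner case) the non-Azumaya locus of $T$ is the union of the non-Azumaya locus of $A$ and the locus on which $G$ fails to act freely on $Z(A)$. Bounding the codimension of these two pieces is the main obstacle. For the first, the ramification (non-Azumaya) locus of an AS-regular algebra of global dimension two has codimension at least two in its centre. For the second, and this is the crucial point, the hypothesis that $G$ acts with \emph{trivial homological determinant} is the exact quantum analogue of a finite subgroup of $\SL(2,\Bbbk)$ containing no quasi-reflections: it forces the non-free locus of the action on $Z(A)$ to have codimension at least two, just as a subgroup of $\SL(2,\Bbbk)$ acts freely on $\Bbbk^2\setminus\{0\}$.

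It then remains to check reflexivity of $T$ over $Z$. Since $A$ is AS-regular of global dimension two it is Cohen--Macaulay and reflexive over $Z(A)$, and $T$ is free of rank $|G|$ as an $A$-module; transporting this along the finite extension $Z(A)/Z$ of normal domains yields reflexivity of $T$ over $Z$. Combining reflexivity with the codimension-one Azumaya property and the local--global criterion shows that $T = A\hash G$ is a maximal order, giving Theorem \ref{maxorderthm}.

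Finally, for Corollary \ref{invmaxorder} I would pass to the invariant ring via the idempotent $e = \tfrac{1}{|G|}\sum_{g\in G} g$, which gives $A^G \cong eTe$. On the free locus one has $TeT = T$, so Auslander's Theorem for quantum Kleinian singularities \cite{ckwz}, namely $A\hash G \cong \End_{A^G}(A)$, shows that $A^G$ and $T$ are Morita equivalent there; since the Azumaya property is Morita invariant, $A^G$ is Azumaya in codimension one. As $A^G = eTe$ is a $Z$-module direct summand of the reflexive module $T$, it is itself reflexive, and module-finite over its centre. Applying the same local--global criterion, now over $Z(A^G)$, shows that $A^G$ is a maximal order, completing the proof.
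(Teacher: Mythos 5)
Your overall strategy (a reflexivity-plus-codimension-one local--global criterion for maximal orders) is reasonable in principle, but two of your key claims fail for the algebras actually in Table 1, and they fail precisely where the paper has to work hardest. First, the claim that each $A$ (hence $T = A\hash G$) is module-finite over its centre is false in case (iv) and in case (i) when $q$ is not a root of unity: there $Z(\Bbbk_J[u,v]) = \Bbbk = Z(\Bbbk_q[u,v])$, so $T$ is not a finite module over $Z(T)$, is not PI, and your local--global framework over the centre cannot even be set up. The paper handles exactly these cases by a completely different mechanism, namely Martin's theorem for $X$-outer actions (Theorem \ref{martinthm}, used in Theorems \ref{maxordercaseh} and \ref{maxordercaseb}), which requires checking that the extensions of the reflexive height-one primes ($\langle u \rangle$, $\langle v \rangle$) remain prime in $T$.

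Second, and more fundamentally, your codimension-two claim for the non-Azumaya locus is false in the PI cases. For $\Bbbk_q[u,v]$ with $q$ a primitive $k$th root of unity, the algebra fails to be Azumaya along the divisors $\{u^k = 0\}$ and $\{v^k = 0\}$ of $\Spec \Bbbk[u^k,v^k]$ --- codimension one. Likewise, trivial homological determinant constrains the action on $A$, not on $Z(A)$: in case (ii) the reflection $h$ fixes the divisor $u^2 = v^2$ in $\Spec Z(A')$ pointwise, and in case (iii) the non-free locus contains the divisors $u^2 = \omega^i v^2$; this is exactly why the paper must invert $u^2 - v^2$ (Proposition \ref{iiiazumaya}) and $u^{2n} - v^{2n}$ (Propositions \ref{disazumayaodd} and \ref{disazumayaeven}) before the Azumaya property holds. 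So the quantum analogue of ``$\SL(2,\Bbbk)$ acts freely off the origin'' breaks on the centre, $T_\mathfrak{p}$ is \emph{not} Azumaya at the height-one central primes under $\langle u \rangle$, $\langle v \rangle$, $\langle uv \rangle$, $\langle u^2 - v^2\rangle$, $\langle u^{2n} - v^{2n}\rangle$, and your criterion's hypothesis fails at precisely the primes where the paper's proof does its real work: proving these ideals are prime in $T$ (Lemmas \ref{primesforc} and \ref{primesford}), showing the centres of the localised rings are UFDs (Propositions \ref{propcentreodd} and \ref{propcentreeven}), and running the denominator-clearing computation of $\End_T(P)$ in Theorems \ref{bmaxorder}, \ref{maxordercasec} and \ref{maxordercased}. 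A repaired version of your argument would have to supply a separate maximal-order proof at each bad height-one prime, which is essentially the paper's proof. Finally, for the corollary your route through Auslander's Theorem of \cite{ckwz} is not logically circular but is against the grain of the paper (which reproves that theorem \emph{from} the maximal order property); the paper's own deduction is the one-line corner-ring argument $A^G \cong e(A\hash G)e$ together with \cite[Corollary 1.7]{marubayshi}.
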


Finally, as a corollary we give a new proof of Auslander's Theorem for quantum Kleinian singularities, which was established in \cite{ckwz} using very different methods. We remark that our approach also allows one to establish a similar result for deformations of $A \hash G$ and $A^G$, which is not the case using the techniques of \cite{ckwz}.

\begin{thm}[Theorem \ref{auslandercor}] \label{auslanderintro}
Suppose that $A^G$ is a quantum Kleinian singularity. Then
\begin{align*}
\End_{A^G}(A) \cong A \hash G.
\end{align*}
\end{thm}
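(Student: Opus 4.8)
The plan is to construct the standard comparison homomorphism and prove that it is an isomorphism by a maximal-order argument, using the Azumaya property of $A \hash G$ over the generic point of its centre. Define $\Phi \colon A \hash G \to \End_{A^G}(A)$ by $\Phi(a \hash g)(x) = a\, g(x)$ for $a, x \in A$ and $g \in G$. Since each $r \in A^G$ is fixed by $G$, every $\Phi(a \hash g)$ is right $A^G$-linear, and a direct computation shows that $\Phi$ is a homomorphism of $\Bbbk$-algebras; as the $G$-action is graded and $A$ is graded, $\Phi$ is moreover a homomorphism of graded algebras. It therefore suffices to show that $\Phi$ is bijective.

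I would first analyse $\Phi$ generically. Write $C$ for the centre of $A \hash G$ and $Q = (A \hash G) \otimes_C \Frac(C)$ for the associated simple artinian quotient ring. The results behind Theorem \ref{maxintro} (namely Theorems \ref{outerintro} and \ref{innerintro}, applied to the entries of Table 1) show that $A \hash G$ is prime noetherian PI and that a suitable central localisation is Azumaya; being prime PI, it is then Azumaya on a dense open subset of $\MaxSpec C$ and, by Posner's theorem, at the generic point. On this Azumaya locus $A$ becomes a progenerator over the corresponding localisation of $A^G$, so the Morita/double-centralizer theorem for Azumaya algebras identifies the localisation of $\End_{A^G}(A)$ with that of $A \hash G$ compatibly with $\Phi$, while the rank identity $\rank(A \hash G) = |G|^2 \rank A$ from Theorem \ref{outerintro} confirms that the ranks match. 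Consequently $\Phi \otimes_C \Frac(C)$ is an isomorphism onto $Q$. Since $A \hash G$ is a prime order and hence $C$-torsion-free, it embeds into $Q$, so this generic isomorphism forces $\Phi$ itself to be injective; at the same time $\End_{A^G}(A)$ is thereby realised as an order in $Q$ containing $\Phi(A \hash G)$.

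To finish I would clear denominators and invoke maximality. The cokernel $\End_{A^G}(A) / \Phi(A \hash G)$ is a finitely generated $C$-module whose support misses the generic point, so it is killed by some nonzero $c \in C$; thus $c \cdot \End_{A^G}(A) \subseteq \Phi(A \hash G)$, and $\End_{A^G}(A)$ is an order in $Q$ equivalent to $\Phi(A \hash G) \cong A \hash G$. Because $A \hash G$ is a maximal order by Theorem \ref{maxintro}, no order properly containing $\Phi(A \hash G)$ can be equivalent to it, whence $\Phi(A \hash G) = \End_{A^G}(A)$ and $\Phi$ is an isomorphism; the graded statement follows since $\Phi$ is graded. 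The main obstacle is the generic analysis of the second paragraph: one must verify, case by case against Table 1 and using both the freeness criterion of Corollary \ref{corollaryintro} and the inner-automorphism result Theorem \ref{innerintro}, that $A \hash G$ really is Azumaya at the generic point and that the Morita identification of $\End_{A^G}(A)$ with $A \hash G$ holds there with matching rank. Once this is in place, the passage from a generic isomorphism to a global one is the routine maximal-order argument above.
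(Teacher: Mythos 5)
Your overall skeleton --- realise $\End_{A^G}(A)$ and $\Phi(A \hash G)$ as equivalent orders in a common quotient ring and let maximality of $A \hash G$ (Theorem \ref{maxorderthm}) force equality --- is sound, and the final step coincides with the paper's. But the engine of your argument, the generic Azumaya analysis, fails outright for two of the four cases in Table 1. Your assertion that ``$A \hash G$ is prime noetherian PI and a suitable central localisation is Azumaya'' is false in case (iv) and in case (i) when $q$ is not a root of unity: there $Z(A) = \Bbbk$, the action is $X$-outer (proof of Lemma \ref{AGprime}), so Lemma \ref{outercentre} gives $C = Z(A \hash G) = Z(A)^G = \Bbbk$. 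Consequently $(A \hash G) \otimes_C \Frac(C) = A \hash G$ is not the simple artinian quotient ring, $A \hash G$ is not PI (the paper notes this explicitly in the discussion after Lemma \ref{height2}), Posner's theorem does not apply, and there is no Azumaya locus at all. This is precisely why the paper's Azumaya propositions (\ref{bisazumaya}, \ref{iiiazumaya}, \ref{disazumayaodd}, \ref{disazumayaeven}) cover only the finite-over-centre cases, and why the maximal-order property in the remaining cases is proved instead via Theorem \ref{martinthm}. As written, your proof therefore establishes nothing in case (iv) or in case (i) with generic $q$.

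The paper sidesteps this by making the ``equivalent orders'' step completely general: Lemma \ref{endoringlem} shows that for \emph{any} prime Goldie maximal order $R$ and idempotent $e$ one has $\End_{eRe}(Re) \cong R$, via Small's theorem ($eQe = Q(eRe)$), the identification of $Qe$ as the $eRe$-injective hull of $Re$, and unique lifting of endomorphisms to $Qe$, yielding $R \subseteq \End_{eRe}(Re) \subseteq Q$ together with $\End_{eRe}(Re) \cdot e \subseteq R$ --- no PI, Azumaya, or rank input whatsoever. The theorem then follows by taking $e = \tfrac{1}{|G|} \sum_{g \in G} g$ and invoking $eRe \cong A^G$, $Re \cong A$ from \cite[Lemma 3.1]{bhz}; your map $\Phi$ is exactly the left-multiplication embedding appearing there. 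In the PI cases your localisation route could likely be completed (modulo the progenerator/fullness of $e$ over the Azumaya locus, which you rightly flag as unverified), but to cover all of Table 1 you need a case-free replacement for the generic analysis, such as the paper's injective-hull lemma.
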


\subsection{Organisation of this paper} In Section 2, we recall some basic definitions and facts, and we also fix our notation. In Sections \ref{outersec} and \ref{innersec}, we prove Theorems \ref{outerintro} and \ref{innerintro}, and provide some examples demonstrating the necessity of our hypotheses. Section 5 shows that, for particular quantum Kleinain singularities $A^G$, suitable localisations of the skew group rings $A \hash G$ are Azumaya. Finally, in Sections 6 and 7 we prove Theorems \ref{maxintro} and \ref{auslanderintro}.

\subsection{Acknowledgements} The author is an EPSRC-funded student at the University of Edinburgh, and material contained in this paper will form part of his PhD thesis. The author would like to thank his supervisor Susan J. Sierra for her guidance, James Zhang, Chelsea Walton and Ken Brown for helpful discussions, and the EPSRC.

\section{Preliminaries} \label{preliminaries}
\indent We now recall some definitions and results that will be used throughout this paper, and we provide the classification of quantum Kleinian singularities outlined in the introduction. In this section, $R$ will denote an arbitrary ring.

\subsection{Definitions, notation, and basic results}
We fix an algebraically closed field $\Bbbk$ of characteristic $0$ throughout this paper. We write $\mod R$ (respectively, $R\text{-mod}$) for the category of finitely generated right (respectively, left) $R$-modules; we shall work with right $R$-modules unless stated otherwise. When we speak of a noetherian ring, we mean a ring which is both right and left noetherian. \\
\indent Let $A$ be a ring with centre $Z$. There is a natural map
\begin{align*}
\theta : A \otimes_Z A^{\text{op}} \to \End_Z(A), \qquad a \otimes b \mapsto (r \mapsto arb).
\end{align*}
We say that $A$ is an \emph{Azumaya algebra (over its centre $Z$)} if $A$ is a finitely generated projective $Z$-module and $\theta : A \otimes_Z A^{\text{op}} \to \End_Z(A)$ is an isomorphism. \\
\indent Closely related is the notion of a \emph{separable extension}. Given an extension of rings $R \subseteq S$, $S$ is said to be a separable extension of $R$ if there exists an element $\sum_{i=1}^m s_i \otimes s_i'$ of $S \otimes_R S$ such that $\sum_{i=1}^m s_i s_i' = 1$ and $\sum_{i=1}^m x s_i \otimes s_i' = \sum_{i=1}^m s_i \otimes s_i' x$ for all $x \in S$. We can then characterise an Azumaya algebra as being a ring which is separable over its centre. \\
\indent Clearly any commutative ring is Azumaya, and the same is true of any matrix ring over a commutative ring \cite[Proposition 13.7.7]{mandr}. For another example, let $q \in \Bbbk^\times$ and consider the quantum torus $\Bbbk_q[u^{\pm 1},v^{\pm 1}]$, which is the algebra with generators $u^{\pm 1}$ and $v^{\pm 1}$ subject to the relation $vu = quv$. If $q$ is a root of unity, then this algebra is Azumaya \cite[Example III.1.4]{browngoodearl}. We will frequently make use of the fact that the localisation of an Azumaya algebra at an Ore set consisting of central elements is again Azumaya. \\
\indent By \cite[III.1.4]{browngoodearl}, if $A$ is a prime Azumaya algebra, then $n^2 \coloneqq \dim_{Z/\mathfrak{m}} (A/\mathfrak{m}A)$ is constant as $\mathfrak{m}$ varies over maximal ideals of $Z$, and this value is necessarily square. In this case, we say that $A$ has \emph{rank} $n^2$ (over its centre). \\
\indent An important result in Azumaya theory is the Artin-Procesi Theorem, which characterises prime Azumaya algebras in terms of polynomial identities, and in particular in terms of the PI (polynomial identity) degree of factors of the algebra. Any undefined terms in what follows can be found in \cite[Appendix I.13]{browngoodearl}.

\begin{thm}[Artin-Procesi]
Suppose that $R$ is a prime ring with centre $Z$. Then the following are equivalent:
\begin{enumerate}[{\normalfont (1)},leftmargin=30pt,topsep=0pt,itemsep=0pt]
\item $R$ is an Azumaya algebra of rank $d^2$;
\item $R$ is a PI ring of PI degree $d$ and $\PIdeg R/P = d$ for all $P \in \Spec R$; and
\item $R$ is a PI ring of PI degree $d$ and $\PIdeg R/M = d$ for all $M \in \MaxSpec R$.
\end{enumerate}
\end{thm}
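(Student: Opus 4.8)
The plan is to prove the cycle $(1) \Rightarrow (2) \Rightarrow (3) \Rightarrow (1)$, the implication $(2) \Rightarrow (3)$ being immediate since maximal ideals are prime. Throughout I would lean on three standard facts about prime PI rings collected in \cite[Appendix I.13]{browngoodearl}: Posner's theorem, that the central localisation $Q = (Z \setminus \{0\})^{-1} R$ is a central simple algebra of dimension $(\PIdeg R)^2$ over $\Frac(Z)$; Kaplansky's theorem, that a simple PI ring is simple Artinian and finite over its centre; and the existence of a Formanek--Razmyslov central polynomial $c$ for $d \times d$ matrices, namely a polynomial that is an identity of $(d-1)\times(d-1)$ matrices, is \emph{not} an identity of $M_d$, and whose values on any $M_d$ are scalar. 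I would also invoke the local characterisation of Azumaya algebras: a prime PI ring satisfying all identities of $M_d$ in which the evaluations of $c$ generate the unit ideal is Azumaya of rank $d^2$.

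For $(1) \Rightarrow (2)$: if $R$ is Azumaya of rank $d^2$, then $R$ is prime PI and embeds in the central simple algebra $Q$ of dimension $d^2$ over $\Frac(Z)$, so it satisfies exactly the identities of $M_d$ and $\PIdeg R = d$. For a prime $P$, Azumaya theory supplies the correspondence $P = (P \cap Z)R$ with $\mathfrak{p} \coloneqq P \cap Z \in \Spec Z$, whence $R/P = R \otimes_Z (Z/\mathfrak{p})$ is again Azumaya of rank $d^2$ over the domain $Z/\mathfrak{p}$, hence prime of PI degree $d$. Thus $\PIdeg R/P = d$ for every $P \in \Spec R$.

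The substance is $(3) \Rightarrow (1)$, which I would run through $c$. Since $R$ is prime PI of PI degree $d$ it sits inside the central simple algebra $Q$ of dimension $d^2$, so it satisfies every identity of $M_d$; in particular $[c(\underline{x}),y]$ is an identity of $R$, so every evaluation of $c$ on $R$ is central. Let $I \trianglelefteq R$ be the ideal generated by these (central) evaluations. For each $M \in \MaxSpec R$, the quotient $R/M$ is simple Artinian by Kaplansky, of PI degree $d$, hence a central simple algebra satisfying \emph{exactly} the identities of $M_d$; as $c$ is not among these, $c$ does not vanish identically on $R/M$, so the image of $I$ is a nonzero central ideal of the simple ring $R/M$ and therefore $I \not\subseteq M$. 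Since every proper ideal of $R$ lies in a maximal ideal, $I = R$, i.e.\ the evaluations of $c$ generate the unit ideal. The local criterion then gives that $R$ is Azumaya, and comparison with $Q$ shows the rank is $d^2$.

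The main obstacle is the $(3) \Rightarrow (1)$ direction, and within it the two facts doing the real work: the existence of the central polynomial $c$ with the prescribed vanishing behaviour, and the local characterisation of Azumaya algebras in terms of its evaluations. Once these are granted, the role of hypothesis (3) is exactly to guarantee that $c$ survives in every simple quotient $R/M$, and this is the delicate point, since a priori $R/M \cong M_n(D)$ for a division ring $D$ with $n \cdot \PIdeg D = d$, and one must verify that the central polynomial for $M_d$ is genuinely nonzero on such a ring. The PI-degree hypothesis handles this uniformly: $\PIdeg M_n(D) = d$ forces its identities to be precisely those of $M_d$, so $c$ cannot vanish on it, and it is this uniformity over $\MaxSpec R$ that upgrades the pointwise non-vanishing into the global statement $I = R$.
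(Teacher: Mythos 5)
The paper offers no proof of this statement to compare against: the Artin--Procesi theorem is quoted there as a classical result, with \cite[Appendix I.13]{browngoodearl} cited for the surrounding PI theory. Judged on its own, your argument follows the standard literature route (cf.\ \cite[13.7.14]{mandr}) and its structure is sound: the cycle $(1)\Rightarrow(2)\Rightarrow(3)\Rightarrow(1)$ is the right decomposition, the step $(1)\Rightarrow(2)$ via the ideal correspondence $P = (P\cap Z)R$ for Azumaya algebras \cite[Proposition 13.7.9]{mandr} is correct, and the use of hypothesis (3) to force the Formanek--Razmyslov evaluations to generate the unit ideal is exactly how the prime-ring formulation is derived. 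You should be aware, though, that the ``local characterisation'' you grant as a black box --- a ring satisfying all identities of $M_d$ in which the evaluations of $c$ generate the unit ideal is Azumaya of rank $d^2$ --- is not a peripheral lemma: it \emph{is} the Artin--Procesi theorem in its classical formulation, and its proof is where all the hard content lives. So what you have written is a correct reduction of the stated version to the classical version, not an independent proof; this is acceptable at the level at which the paper itself treats the result, but you should say so explicitly rather than present it as a mere ``local criterion.''

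One concrete slip needs repair. Your claim that $\PIdeg R/M = d$ forces the identities of $R/M$ to be \emph{precisely} those of $M_d$ is false when the centre of $R/M$ is a finite field: $M_d(\mathbb{F}_q)$ satisfies identities that $M_d(\mathbb{Q})$ does not, and nothing in the hypotheses rules out simple quotients with finite centre. The conclusion you actually need --- that $c$ does not vanish identically on $R/M$ --- survives, but for a different reason: the central polynomial may be taken multilinear, and a multilinear non-identity of $M_d(\overline{\Bbbk'})$ cannot vanish on any degree-$d$ central simple algebra over a subfield $\Bbbk'$, since by multilinearity its values on $M_d(K)$ for $K \supseteq \Bbbk'$ are $K$-spanned by its values on tuples from a $\Bbbk'$-basis of the algebra. (In the finite-centre case Wedderburn's little theorem even gives $R/M \cong M_d(\Bbbk')$ outright, since the division ring $D$ is then finite.) With that substitution for your justification, the argument goes through.
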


By combining the Artin-Procesi Theorem with results in the literature, one obtains the following characterisation of an Azumaya algebra which we will use throughout this paper.

\begin{lem} \label{azumayalem}
Suppose that $R$ is a prime $\Bbbk$-algebra which is finite over its centre $Z$ (hence PI) and of PI degree $d$. Then $R$ is an Azumaya algebra of rank $d^2$ if and only if $R/\mathfrak{m}R \cong M_d(\Bbbk)$ for all $\mathfrak{m} \in \MaxSpec Z$.
\end{lem}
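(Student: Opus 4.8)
The plan is to prove Lemma \ref{azumayalem} by combining the Artin–Procesi Theorem with standard facts about PI factor rings, so that the abstract condition ``$\PIdeg R/M = d$ for all $M \in \MaxSpec Z$'' is translated into the concrete matrix condition ``$R/\mathfrak{m}R \cong M_d(\Bbbk)$''. Since $R$ is prime, finite over its centre $Z$, and of PI degree $d$, the Artin–Procesi Theorem tells us that $R$ is Azumaya of rank $d^2$ if and only if $\PIdeg R/M = d$ for every maximal ideal $M$ of $R$. My strategy is to show that this statement is equivalent to the displayed matrix condition over the maximal ideals of the centre.

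First I would record the relationship between maximal ideals of $Z$ and the factor rings $R/\mathfrak{m}R$. Because $R$ is finite over the affine $\Bbbk$-algebra $Z$, for each $\mathfrak{m} \in \MaxSpec Z$ the factor $R/\mathfrak{m}R$ is a finite-dimensional $\Bbbk$-algebra (as $Z/\mathfrak{m} \cong \Bbbk$ by the Nullstellensatz, $\Bbbk$ being algebraically closed). I would then invoke lying over / the standard correspondence for PI rings finite over their centres to see that the maximal ideals $M$ of $R$ are precisely those lying over some $\mathfrak{m} \in \MaxSpec Z$, and that every such $M$ contains $\mathfrak{m}R$ for $\mathfrak{m} = M \cap Z$. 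Thus controlling the PI degree of $R/M$ for all maximal $M$ amounts to understanding the structure of the finite-dimensional algebras $R/\mathfrak{m}R$.

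Next I would carry out the key computation at a fixed $\mathfrak{m} \in \MaxSpec Z$. The claim is that $\PIdeg R/M = d$ for every maximal ideal $M$ lying over $\mathfrak{m}$ if and only if $R/\mathfrak{m}R \cong M_d(\Bbbk)$. For the reverse direction, if $R/\mathfrak{m}R \cong M_d(\Bbbk)$ then this ring is simple with unique maximal (two-sided) ideal $0$, so the only $M$ over $\mathfrak{m}$ gives $R/M \cong M_d(\Bbbk)$, which has PI degree $d$. For the forward direction, the general bound $\PIdeg R/M \le \PIdeg R = d$ holds, and I would use that $R/\mathfrak{m}R$ is a finite-dimensional $\Bbbk$-algebra whose simple factors $R/M$ each have PI degree $d$, forcing each simple factor to be $M_d(\Bbbk)$; I then need semisimplicity of $R/\mathfrak{m}R$ to conclude it is a product of such matrix rings, and a dimension or rank count (using that $R$ has \emph{constant} rank $d^2$ as in the preamble) to force a single factor. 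This is where I would lean on the fact, recalled before the lemma, that $\dim_{Z/\mathfrak{m}}(R/\mathfrak{m}R) = d^2$ is constant and square for a prime Azumaya algebra, together with the Azumaya-theoretic characterisation that the fibres of an Azumaya algebra are central simple (hence full matrix) algebras.

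The main obstacle I anticipate is the forward implication, specifically ruling out that $R/\mathfrak{m}R$ could be a nontrivial product or could fail to be semisimple (i.e.\ have nonzero Jacobson radical) while still having all its simple factors of PI degree $d$. Handling this cleanly requires knowing that the Azumaya condition forces $R/\mathfrak{m}R$ to be a \emph{single} central simple algebra over $Z/\mathfrak{m} \cong \Bbbk$, which is exactly the content one extracts from the equivalence of Azumaya with separability and from the constancy of the fibre dimension $d^2$. I expect the proof to be short once the correspondence between $\MaxSpec Z$ and $\MaxSpec R$ is in place, with the bulk of the argument being the reconciliation of the ``all PI degrees equal $d$'' condition with the matrix-ring condition via dimension counting over each residue field $Z/\mathfrak{m} \cong \Bbbk$.
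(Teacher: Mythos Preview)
Your proposal is correct and follows the same overall strategy as the paper: reduce to the Artin--Procesi criterion and relate maximal ideals of $R$ to those of $Z$. Your backward direction is essentially identical to the paper's.

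The forward direction, however, is more circuitous than necessary. You set out to prove the intermediate claim ``$\PIdeg R/M = d$ for all $M$ over $\mathfrak{m}$ $\Leftrightarrow$ $R/\mathfrak{m}R \cong M_d(\Bbbk)$'', worry about semisimplicity and multiple factors, and then resolve this by invoking the Azumaya-theoretic fact that fibres are central simple. But once you invoke that fact, the Artin--Procesi detour is superfluous: central simple of dimension $d^2$ over $\Bbbk$ already gives $M_d(\Bbbk)$. The paper avoids this tangle by citing directly that for an Azumaya algebra $\mathfrak{m}R$ is a \emph{maximal} ideal of $R$ (so $R/\mathfrak{m}R$ is simple from the start), then applying Artin--Procesi to the single maximal ideal $\mathfrak{m}R$ to get $\PIdeg R/\mathfrak{m}R = d$, and concluding $R/\mathfrak{m}R \cong M_d(\Bbbk)$. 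Your anticipated ``main obstacle'' simply does not arise if you use this formulation of the Azumaya fibre fact up front.
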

\begin{proof}
($\Rightarrow$) First suppose that $R$ is Azumaya of rank $d^2$ and consider $\mathfrak{m} \in \MaxSpec Z$. By \cite[Proposition 13.7.9]{mandr}, $\mathfrak{m}R$ is a maximal ideal of $R$, and so $\PIdeg R/\mathfrak{m}R = d$ by the Artin-Procesi Theorem. By \cite[Theorem III.1.6]{browngoodearl}, it follows that $R/\mathfrak{m}R \cong M_d(\Bbbk)$. \\
\indent ($\Leftarrow$) Now let $M$ be a maximal ideal of $R$, and set $\mathfrak{m} = M \cap Z$, which is a maximal ideal of $Z$. Then $M/\mathfrak{m}R$ is a proper ideal of the ring $R/\mathfrak{m}R$, which is isomorphic to $M_d(\Bbbk)$ by hypothesis. Since this is a simple ring we must have $M/\mathfrak{m}R = 0$, and so $M = \mathfrak{m}R$. In particular, $\PIdeg R/M = d$, and so by the Artin-Procesi Theorem, $R$ is an Azumaya algebra of rank $d^2$.
\end{proof}

\indent Let $G$ be a group acting (on the left) on a ring $R$. The \emph{skew group ring} $R \hash G$ is then the free left $R$-module with the elements of $G$ as a basis, with multiplication extended linearly from the rule $(rg)(sh) = r(g \cdot s) \gap gh$ for $r,s \in R$, $g,h, \in G$, where $g \cdot s$ is the image of $s$ under the action of $g$. \\
\indent The notion of a crossed product is a generalisation of the skew group ring construction. Suppose that $R$ is a ring and $G$ is a group. Then a \emph{crossed product} $R*G$ of $R$ by $G$ is a ring containing a copy of $R$ and a set of units $\overline{G} = \{\overline{g} \mid g \in G \}$ which is in bijection with $G$, such that:
\begin{enumerate}[{\normalfont (1)},topsep=0pt,itemsep=0pt,leftmargin=30pt]
\item $R*G$ is a free left $R$-module with basis $\overline{G}$, and where $\overline{e} = 1$;
\item If $g \in G$ then $\overline{g}R = R \overline{g}$; and 
\item If $g,h \in G$ then $R \overline{g}\overline{h} = R \overline{gh}$.
\end{enumerate}
The second condition implies that each element $\overline{g}$ induces an automorphism $\alpha_g$ of $R$ via $\overline{g}r = \alpha_g(r)\overline{g}$; however, in general, the set $\{ \alpha_g \mid g \in G \}$ is not a group. Despite this, it will still be convenient to say that $G$ acts on $R$, and to use standard group-theoretic terminology. The third condition implies that there exists a map $\tau : G \times G \to U(R)$ such that $\overline{g}\overline{h} = \tau(g,h) \overline{gh}$. A skew group ring is the special case where $\tau(g,h) = 1$ for all $g,h \in G$. \\
\indent Later we will wish to restrict attention to so-called $X$-outer automorphisms, and to define this we need to recall another definition. Let $R$ be a prime ring. The \emph{symmetric Martindale ring of quotients} of $R$ is the ring extension $Q_s(R)$ uniquely determined by the following properties:

\begin{enumerate}[{\normalfont (1)},leftmargin=30pt,topsep=0pt,itemsep=0pt]
\item If $q \in Q_s(R)$ then there exist nonzero ideals $I,J$ of $R$ such that $Iq, qJ \subseteq R$;
\item Let $q \in Q_s(R)$ and let $I$ be a nonzero ideal of $R$. If either $Iq = 0$ or $qI = 0$ then $q = 0$;
\item Let $I,J$ be nonzero ideals of $R$ and let $f : {}_R I \to {}_R R$ and $g : J_R \to R_R$ be module homomorphisms satisfying $f(a)b = a g(b)$ for all $a \in I$ and $b \in J$. Then there exists $q \in Q_s(R)$ such that $f(a) = aq$ and $g(b) = qb$ for all $a \in I$ and $b \in J$.
\end{enumerate}

\indent We remark that, in the case of a prime PI ring, $Q_s(R)$ is obtained by inverting the nonzero central elements of $R$, see \cite[Theorem 23.4]{passman}. \\
\indent We can now define what we mean by $X$-inner and $X$-outer automorphisms. An automorphism of a prime ring $R$ is said to be \emph{$X$-inner} if it is equal to conjugation by an element of $Q_s(R)$, and it is called \emph{$X$-outer} otherwise. If a group $G$ acts on a prime ring $R$, we write $G_{X\text{-inn}}$ for the subgroup of $G$ consisting of elements which act as $X$-inner automorphisms. The action is said to be \emph{$X$-outer} if $G_{X\text{-inn}}$ is trivial. \\
\indent Given a semiprime noetherian ring $R$, write $Q(R)$ for its Goldie quotient ring. Then $R$ is said to be a \emph{maximal order} if there exists no order $R'$ with $R \subset R' \subseteq Q(R)$ and with $aR'b \subseteq R$ for some nonzero $a,b \in R$. This should be viewed as a noncommutative analogue of a noetherian domain being integrally closed, see \cite[Proposition 5.1.3]{mandr}. \\
\indent We have the following equivalent characterisations of the property of being a maximal order. One of these characterisations appears in \cite[Lemma 2.1]{martin} but without proof, so we provide one. We recall that for a nonzero ideal $I$ of $R$, we write
\begin{align*}
O_\ell(I) \coloneqq \{ q \in Q(R) \mid qI \subseteq I \}, \qquad O_r(I) \coloneqq \{ q \in Q(R) \mid Iq \subseteq I \}.
\end{align*}

\begin{lem}[Martin] \label{maxorderlem}
Let $R$ be a prime Noetherian ring. Then the following are equivalent:
\begin{enumerate}[{\normalfont (1)},leftmargin=*,topsep=0pt,itemsep=0pt]
\item $R$ is a maximal order;
\item $\End_R(I) = R$ for all nonzero ideals $I$ of $R$; and
\item $\End_R(\mathfrak{p}) = R$ for all nonzero prime ideals $\mathfrak{p}$ of $R$.
\end{enumerate}
\end{lem}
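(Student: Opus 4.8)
The plan is to translate every condition into a statement about the left order $O_\ell(I)=\{q\in Q(R):qI\subseteq I\}$, and then run a cycle of implications. The key preliminary observation is the standard identification $\End_R(I)\cong O_\ell(I)$ for every nonzero ideal $I$: sending $q$ to left multiplication $\lambda_q\colon a\mapsto qa$ gives a ring homomorphism $O_\ell(I)\to\End_R(I)$ which is injective because $R$ is prime. For surjectivity I would use that $I$, being a nonzero ideal of a prime Goldie ring, is essential as a right ideal and hence contains a regular element, so $IQ(R)=Q(R)$; any $f\in\End_R(I)$ then extends to a right $Q(R)$-linear endomorphism of $Q(R)$, which is necessarily left multiplication by some $q\in Q(R)$, and $f=\lambda_q$ with $qI=f(I)\subseteq I$. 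Since $RI\subseteq I$ we always have $R\subseteq O_\ell(I)$, so condition (2) reads ``$O_\ell(I)=R$ for all nonzero ideals $I$'' and condition (3) reads ``$O_\ell(\mathfrak p)=R$ for all nonzero primes $\mathfrak p$''; in particular (2)$\Rightarrow$(3) is immediate.

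For (1)$\Rightarrow$(2), note that $O_\ell(I)$ is a subring of $Q(R)$ containing $R$, hence an order in $Q(R)$. Choosing a regular element $b\in I$, we get $O_\ell(I)\,b\subseteq O_\ell(I)\,I\subseteq I\subseteq R$, that is, $1\cdot O_\ell(I)\cdot b\subseteq R$ with $1,b$ nonzero; maximality of $R$ then forces $O_\ell(I)=R$. For (2)$\Rightarrow$(1) I would argue contrapositively: if $R$ is not a maximal order, there is an order $S$ with $R\subsetneq S\subseteq Q(R)$ and $aSb\subseteq R$ for some nonzero $a,b$. Setting $\mathfrak d\coloneqq\{r\in R:aSr\subseteq R\}$, one checks directly that $\mathfrak d$ is a two-sided ideal (using $Ss\subseteq S$ for $s\in R$), that it is nonzero since $b\in\mathfrak d$, and that $s\mathfrak d\subseteq\mathfrak d$ for every $s\in S$ because $aS(sr)=a(Ss)r\subseteq aSr\subseteq R$. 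Hence $S\subseteq O_\ell(\mathfrak d)$, so $O_\ell(\mathfrak d)\supsetneq R$, contradicting (2).

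The implication (3)$\Rightarrow$(2) is the step I expect to be the \textbf{main obstacle}, since it requires passing from prime ideals to arbitrary ideals. I would argue by Noetherian induction: if (2) fails, choose an ideal $I$ maximal among the nonzero ideals with $O_\ell(I)\neq R$. By (3) the ideal $I$ cannot be prime, so we may pick ideals $B,C\supsetneq I$ with $BC\subseteq I$, and by maximality $O_\ell(B)=O_\ell(C)=R$. The crux is to leverage these to control $O_\ell(I)$: for any $x\in O_\ell(I)$ we have $xBC\subseteq xI\subseteq I\subseteq C$, whence $xB\subseteq O_\ell(C)=R$. Thus $D\coloneqq O_\ell(I)\,B$ is a two-sided ideal of $R$ satisfying $B\subseteq D\subseteq R$, so in particular $D\supsetneq I$. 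Because $O_\ell(I)$ is a ring we have $O_\ell(I)\,D=D$, giving $O_\ell(I)\subseteq O_\ell(D)$; but $D\supsetneq I$ forces $O_\ell(D)=R$ by the maximality of $I$, so $O_\ell(I)\subseteq R$ --- contradicting the choice of $I$. This establishes (2), and together with (1)$\Rightarrow$(2)$\Rightarrow$(1) and (2)$\Leftrightarrow$(3) all three conditions are equivalent. The delicate points to get right are the identification $\End_R(I)\cong O_\ell(I)$ and, above all, the recognition of the auxiliary ideal $D=O_\ell(I)B$ that makes the reduction to primes go through.
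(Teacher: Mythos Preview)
Your argument is correct and follows the same overall strategy as the paper: the equivalence (1)$\Leftrightarrow$(2) is standard (the paper simply cites \cite[Proposition 5.1.4]{mandr}, whereas you spell it out via $O_\ell$ and a conductor ideal), and for (3)$\Rightarrow$(2) both you and the paper choose an ideal $I$ maximal among counterexamples and derive a contradiction by producing a strictly larger ideal whose left order still contains $\End_R(I)$. The only real difference is the choice of auxiliary ideal: with $B,C\supsetneq I$ and $BC\subseteq I$, the paper takes the ideal quotient $H=\{r\in R: rC\subseteq I\}$ and shows $\End_R(I)\cdot H\subseteq H$, while you take the product $D=O_\ell(I)\,B$ and show it is an ideal of $R$ with $O_\ell(I)\,D=D$. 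Both constructions work and the proofs are of the same length and difficulty; your version has the minor advantage of making the identification $\End_R(I)\cong O_\ell(I)$ explicit, which is used tacitly elsewhere in the paper.
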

\begin{proof}
Throughout write $Q = Q(R)$. The equivalence of (1) and (2) is well-known and true under weaker hypotheses, see \cite[Proposition 5.1.4]{mandr}. It is clear that (2) implies (3), so it remains to show the reverse implication. \\
\indent Suppose that (2) does not hold. By the noetherian hypothesis, choose an ideal $I$ maximal among those ideals satisfying $R \subsetneq \End_R(I) \subseteq Q$. We claim that $I$ is prime. Seeking a contradiction, suppose this is not the case, so there exist ideals $J,K \nsubseteq I$ with $JK \subseteq I$; without loss of generality, we may assume that $J,K \supsetneq I$. Set $H = \{r \in R \mid rK \subseteq I \}$, so that $H$ is an ideal of $R$ with $H \supseteq J \supsetneq I$. Note that if $h \in H$ and $q \in \End_R(I)$ then
\begin{align}
qhK \subseteq qI \subseteq I \subsetneq K, \label{martinlemeqn}
\end{align}
so that $qh \in \End_R(K)$. By the maximality hypothesis on $I$, we have $\End_R(K) = R$, so $qh \in R$. But (\ref{martinlemeqn}) also shows that $qhK \subseteq I$, so that $qh \in H$ by the  definition of $H$ and hence $q \in \End_R(H)$. Therefore $H$ is an ideal of $R$ with $H \supsetneq I$ and $R \subset \End_R(I) \subseteq \End_R(H) \subseteq Q$, contradicting our choice of $I$, and so $I$ must be prime. Thus conditions (2) and (3) are equivalent.
\end{proof}

Finally, we define some ring-theoretic and homological notions. A ring $R$ is said to be \emph{Gorenstein} if it is noetherian and both $\idim R_R$ and $\idim {}_R R$ are finite. By \cite[Lemma A]{zaks}, our hypotheses imply that these two values coincide, and we call this common value the \emph{(injective) dimension} of $R$. For an $R$-module $M$, the \emph{grade} of $M$ is defined to be $j(M) = \inf \{ i \mid \Ext_R^i (M,R) \neq 0\} \in \NN \cup \{ \infty \}$. If $R$ is a Gorenstein ring of finite GK dimension, then it is said to be \emph{(GK-)Cohen-Macaulay} (CM) if $\GKdim M + j(M) = \GKdim R$ for all finitely generated left and right modules $M$.

\subsection{Quantum Kleinian singularities} 
In \cite{ckwz14}, the authors classified all pairs $(A,G)$, where $A$ is an AS-regular algebra of global dimension 2 generated in degree 1, and $G$ is a finite group acting faithfully on $A$ and with \emph{trivial homological determinant}. (In fact, the authors proved a slightly stronger result concerning Hopf actions, but in the present context we focus only on the group case.) While we do not give a precise definition of this last term, we remark that if $G$ is a finite subgroup of $\GL(n,\mathbb{C})$ acting on $\mathbb{C}[x_1, \dots , x_n]$ with trivial homological determinant, then $G$ is in fact a finite subgroup of $\SL(n,\mathbb{C})$; see \cite[pp. 6]{ckwz}. The classification is as follows:

\begin{table}[h]
\begin{center}
{\renewcommand{\arraystretch}{1.3}%
\begin{tabular}{ c | c | c } 
Case & $A$ & $G$\\
\hline
(0) & \hfill $\Bbbk[u,v]$ & $ G \finsubgroup \SL(2,\mathbb{\Bbbk})$ \\ 
(i) & \hfill $\Bbbk_q[u,v]$ & $C_{n+1}$ \\ 
(ii) & \hfill $\Bbbk_{-1}[u,v]$ & $ S_2$ \\ 
(iii) & \hfill $\Bbbk_{-1}[u,v]$ & $D_{n}$ \\ 
(iv) & \hfill $\Bbbk_J[u,v]$ & $C_2$\\ 
\end{tabular}}
\\[10pt]
\caption*{Table 1: The pairs $(A,G)$.}
\end{center}
\end{table}
\vspace{-10pt}
\noindent Observe that this classification includes the case of finite subgroups of $\SL(2,\Bbbk)$ acting on a polynomial ring in two variables. We will often refer to case (0) as the classical case, and the remaining cases as the quantum cases. For case (0), we refer to the rings $A^G$ as classical Kleinian singularities, and following \cite{ckwz}, for the remaining cases we call the rings $A^G$ \emph{quantum Kleinian singularities}. Henceforth, when we say that $A^G$ is a quantum Kleinian singularity, we mean that the pair $(A,G)$ is a pair from cases (i)-(iv) of Table 1. We will not consider case $(0)$ for the remainder of this paper, other than to draw comparisons between it and the quantum cases. \\
\indent We briefly explain the notation and how the groups act. The algebras $\Bbbk_q[u,v]$ (where $q \in \Bbbk^\times$) and $\Bbbk_J[u,v]$ are, respectively, the \emph{quantum plane} and \emph{Jordan plane}, and have presentations
\begin{align*}
\Bbbk_q[u,v] = \frac{\Bbbk \langle u,v \rangle}{\langle vu - quv \rangle} \quad \text{and} \quad \Bbbk_J[u,v] = \frac{\Bbbk \langle u,v \rangle}{\langle vu - uv - u^2 \rangle}.
\end{align*}
These algebras are both noetherian domains of global dimension $2$, and may be thought of as noncommutative analogues of $\Bbbk[u,v]$. \\
\indent The groups $C_n, S_2$, and $D_n$ are, respectively, the cyclic group of order $n$, the symmetric group on two letters, and the dihedral group of order $2n$ (obviously $S_2 \cong C_2$, but our choice of notation will become clear soon). We will frequently make use of the abstract presentations
\begin{align*}
C_n = \langle g \mid g^n \rangle, \quad S_2 = \langle h \mid h^2 \rangle \quad \text{and} \quad D_n = \langle g,h \mid g^n, h^2, (hg)^2 \rangle.
\end{align*}
Throughout the remainder of this paper, given a quantum Kleinian singularity $A^G$, $g$ will denote an element of order $n$ (where $n$ is to determined from context) and $h$ will denote an element of order $2$. These elements will act on $u,v \in A$ via
\begin{align*}
g \cdot u = \omega u, \quad g \cdot v = \omega^{-1} v, \quad h \cdot u = v, \quad h \cdot v = u,
\end{align*}
where $\omega$ is a primitive $n$th root of unity. One can verify that these give rise to well-defined actions of the groups from Table 1 on the corresponding algebras.


\section{Azumaya crossed products with an $X$-outer action} \label{outersec}

In this section, we prove Theorem \ref{outerintro} from the introduction, and provide an example which demonstrates an approach we will use in later sections. Suppose that $A*G$ is a crossed product where $G$ is a finite group acting $X$-outer on $A$. Unsurprisingly, a necessary condition to ensure that $A*G$ is Azumaya is that $A$ is itself Azumaya. Additionally, since properties of an Azumaya algebra $A$ are closely related to the geometry of $\Spec Z(A)$, one should expect the action of $G$ on the points of $\Spec Z(A)$ to influence whether $A*G$ is Azumaya. This is the case, as we will see in Theorem \ref{outerazumaya}. \\
\indent We first determine the centre $Z(A * G)$ of $A*G$, which is relatively straightforward.

\begin{lem} \label{outercentre}
Consider a crossed product $T \coloneqq A * G$, where $A$ is prime and $G$ is a finite group. Suppose that the action of $G$ on $A$ is $X$-outer. 
Then $Z(A * G) = Z(A)^G$.
\end{lem}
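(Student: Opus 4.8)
The plan is to prove the two inclusions $Z(A)^G \subseteq Z(T)$ and $Z(T) \subseteq Z(A)^G$ separately; the first is routine, and the second is where the $X$-outer hypothesis does all of the work. For the easy inclusion, I would take $z \in Z(A)^G$. Since $z \in Z(A)$ it commutes with every element of $A$, and since $\alpha_g(z) = z$ for all $g$ we get $\overline{g}\, z = \alpha_g(z)\overline{g} = z\overline{g}$, so $z$ also commutes with each basis element $\overline{g}$. As $A$ together with $\overline{G}$ generates $T$ as a ring, this gives $z \in Z(T)$. Note that the cocycle $\tau$ never enters, since at no point do I need to multiply two of the $\overline{g}$ together.

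For the reverse inclusion, I would write a general central element as $c = \sum_{g \in G} a_g \overline{g}$ with $a_g \in A$, and extract information by comparing $ca$ with $ac$ for $a \in A$. Using $\overline{g}\,a = \alpha_g(a)\overline{g}$ and the fact that $\overline{G}$ is a left $A$-basis of $T$, comparing the coefficient of $\overline{g}$ on both sides of $ca = ac$ yields, for each $g \in G$ and every $a \in A$, the relation $a_g\,\alpha_g(a) = a\,a_g$. Replacing $a$ by $\alpha_g^{-1}(a)$ rewrites this in the more convenient form $a_g\, a = \alpha_g^{-1}(a)\,a_g$ for all $a \in A$.

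The crux is to show that $a_g = 0$ whenever $g \neq e$, and this is the step I expect to be the main obstacle, as it is the only place where primeness and the symmetric Martindale ring of quotients are used. Suppose $a_g \neq 0$. The relation $a_g\, a = \alpha_g^{-1}(a)\,a_g$ exhibits a nonzero element $a_g \in A \subseteq Q_s(A)$ satisfying $q\,a = \sigma(a)\,q$ for all $a \in A$ with $q = a_g$ and $\sigma = \alpha_g^{-1}$. By the standard characterisation of $X$-inner automorphisms of a prime ring in terms of $Q_s(A)$, any such nonzero $q$ is forced to be invertible in $Q_s(A)$, and $\alpha_g^{-1}$, hence $\alpha_g$, is $X$-inner. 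Since the action is $X$-outer, $G_{X\text{-inn}}$ is trivial, which forces $g = e$; thus $a_g = 0$ for all $g \neq e$.

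This leaves $c = a_e \overline{e} = a_e \in A$. To finish, I would again use that $c$ is central: commuting with every element of $A$ gives $a_e \in Z(A)$, while $c\,\overline{g} = \overline{g}\,c$ together with $\overline{g}\,a_e = \alpha_g(a_e)\overline{g}$ gives $\alpha_g(a_e) = a_e$ for all $g \in G$. Hence $a_e \in Z(A)^G$, and combining the two inclusions yields $Z(T) = Z(A)^G$.
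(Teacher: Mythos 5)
Your proof is correct and follows essentially the same route as the paper: both compare coefficients of $ca=ac$ in the left $A$-basis $\overline{G}$ to obtain the intertwining relation $a_g\,\alpha_g(a)=a\,a_g$, then use the standard fact (cited in the paper from Passman) that a nonzero normal element of a prime ring is invertible in $Q_s(A)$, so that $X$-outerness forces $a_g=0$ for $g\neq e$, and finally deduce $a_e\in Z(A)^G$ from centrality. The only cosmetic difference is that you rewrite the relation via $\alpha_g^{-1}$ before invoking the characterisation of $X$-inner automorphisms, whereas the paper phrases the same step in terms of $c_g$ being normal.
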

\begin{proof}
The inclusion $Z(A)^G \subseteq Z(A * G)$ is easy to establish and holds without the $X$-outer hypothesis, so we now show the reverse inclusion. So suppose that $\sum_{g \in G} c_g g \in Z(A * G)$. Then, for any $b \in A$ we have
\begin{align*}
\sum_{g \in G} b c_g \overline{g} = \sum_{g \in G} c_g \overline{g} b = \sum_{g \in G} c_g \gap \alpha_g(b) \overline{g}.
\end{align*}
This forces $c_g \alpha_g ( b) = b c_g$ for each $g \in G$ and each $b \in A$. In particular $c_g$ is a normal element of $A$, and so if it were nonzero then it would be an element of $Q_s(A)$ by \cite[Lemma 2.1 (ii)]{passman2}. Since $\alpha_g$ acts $X$-outer when $g \neq e$ by hypothesis, this forces $c_g = 0$ for all $g \neq e$. Therefore we must have $Z(A * G) \subseteq Z(A)$, so now consider some $a \in Z(A * G) \subseteq Z(A)$. Then, for any $\overline{g}$ we have
\begin{align*}
a \overline{g} = \overline{g} a = \alpha_g(a) \overline{g},
\end{align*}
which means that $a$ lies in $Z(A)^G$. This gives the claimed equality.
\end{proof}

We also need the following lemma:

\begin{lem} \label{Gmaxideal}
Let $R$ be a commutative finitely generated $\Bbbk$-algebra which is a domain. Suppose that $G$ is a finite group acting $\Bbbk$-linearly and freely on $R$; that is, the stabiliser of every maximal ideal of $R$ is trivial. If $\mathfrak{m} \in \MaxSpec R^G$, then $\mathfrak{m} R$ is a $G$-maximal ideal of $R$.
\end{lem}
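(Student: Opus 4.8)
The plan is to work throughout with the fixed-point subring $S := R^G$ and the extension $S \subseteq R$. Since $R$ is a finitely generated $\Bbbk$-algebra on which the finite group $G$ acts, every $r \in R$ is a root of the monic polynomial $\prod_{g \in G}(t - g \cdot r) \in S[t]$, so $R$ is integral over $S$; together with the Noether finiteness theorem (giving that $S$ is a finitely generated $\Bbbk$-algebra and that $R$ is a finitely generated $S$-module) we may treat $S \subseteq R$ as a module-finite integral extension of finitely generated $\Bbbk$-domains. First I would record the two easy points: since $\mathfrak{m} \subseteq S$ is fixed by $G$, the ideal $\mathfrak{m}R$ satisfies $g(\mathfrak{m}R) = \mathfrak{m}R$ and so is a $G$-ideal; and by Lying Over for $S \subseteq R$ there is a maximal ideal of $R$ contracting to $\mathfrak{m}$, whence $\mathfrak{m}R$ is proper.

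Next I would pin down the maximal ideals lying above $\mathfrak{m}$. A maximal ideal $M$ of $R$ contains $\mathfrak{m}R$ if and only if $M \cap S = \mathfrak{m}$ (using that $M \cap S$ is maximal in $S$ by integrality and contains the maximal ideal $\mathfrak{m}$), and by the classical transitivity theorem $G$ permutes these maximal ideals transitively. The free action hypothesis says each such $M$ has trivial stabiliser, so this orbit has exactly $|G|$ distinct elements $M_1, \dots, M_{|G|}$, each with residue field $R/M_i = \Bbbk$ by the Nullstellensatz. The key reduction is then as follows: the Artinian $\Bbbk$-algebra $\overline{R} := R/\mathfrak{m}R$ decomposes as a product $\prod_{i=1}^{|G|} A_i$ of local Artinian $\Bbbk$-algebras indexed by $M_1, \dots, M_{|G|}$, and $G$ permutes these factors simply transitively. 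A $G$-stable ideal of $\overline{R}$ is therefore determined by an arbitrary ideal of a single factor $A_i$ (its images in the remaining factors being forced by the $G$-action), so $\mathfrak{m}R$ is $G$-maximal precisely when each $A_i$ is a field, i.e.\ precisely when $\overline{R}$ is reduced, equivalently $\overline{R} \cong \Bbbk^{|G|}$ with $G$ acting by the regular permutation.

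The crux is thus to show that $\overline{R} = R/\mathfrak{m}R$ is reduced; equivalently, that $\mathfrak{m}R = \bigcap_{i=1}^{|G|} M_i$, or that $S \subseteq R$ is unramified at $\mathfrak{m}$. This is where the characteristic-zero and freeness hypotheses enter decisively: since $|G|$ is invertible, the trace $\mathrm{tr}(r) = \sum_{g \in G} g \cdot r$ and the associated symmetric $S$-bilinear form $(x,y) \mapsto \mathrm{tr}(xy)$ are available, and unramifiedness at $\mathfrak{m}$ is equivalent to nondegeneracy of the induced $\Bbbk$-bilinear form on $\overline{R}$. On the reduced quotient $R/\bigcap_i M_i \cong \Bbbk^{|G|}$ --- where, by simple transitivity, the trace form is the standard (nondegenerate) dot product --- the free action forces trivial inertia at $\mathfrak{m}$; the content of the step is to promote this to nondegeneracy on $\overline{R}$ itself, which is exactly the assertion that the extension is \'etale over the free locus. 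I expect this reducedness/unramifiedness step to be the main obstacle: everything else is formal, but ruling out an infinitesimal (nilpotent) thickening of the fibre genuinely requires the interplay of $|G| \in \Bbbk^{\times}$ with the triviality of stabilisers. A clean way to package it is to observe that the freeness hypothesis is precisely the Chase--Harrison--Rosenberg criterion making $S \subseteq R$ a $G$-Galois extension, so that $\overline{R}$ is a $G$-Galois $\Bbbk$-algebra and hence $\cong \Bbbk^{|G|}$; alternatively one argues directly that the discriminant of $S \subseteq R$ does not vanish at $\mathfrak{m}$.
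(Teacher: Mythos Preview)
Your approach is correct but takes a genuinely different route from the paper's. The paper argues geometrically: it invokes Mumford's GIT to identify $\Spec R^G$ as a geometric quotient and to deduce that the quotient morphism $\pi : \Spec R \to \Spec R^G$ is flat and finite; it then cites a fibre-counting result from Liu's textbook to conclude that $\pi^{-1}(\mathfrak{m}) \cong \Spec R/\mathfrak{m}R$ has exactly $[\Frac(R):\Frac(R^G)] = |G|$ points, which (together with the correspondence between $G$-orbits of closed points and $G$-maximal ideals) forces $\mathfrak{m}R$ to be $G$-maximal. You instead work purely ring-theoretically: after the same preliminary reductions you correctly isolate the crux as reducedness of $R/\mathfrak{m}R$, and your proposed appeal to the Chase--Harrison--Rosenberg criterion is exactly right --- since every residue field is $\Bbbk$ and $G$ acts $\Bbbk$-linearly, inertia groups coincide with stabilisers, so the freeness hypothesis is precisely the CHR condition for $R^G \subseteq R$ to be $G$-Galois, whence the base change $R/\mathfrak{m}R$ is $G$-Galois over $\Bbbk$ and hence isomorphic to $\Bbbk^{|G|}$ with the regular $G$-action. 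Both arguments are ultimately encoding the fact that the quotient map is \'etale over the free locus, but the paper buys this via a black-box flatness statement from GIT and a scheme-theoretic fibre computation, whereas your version stays within commutative algebra, makes the role of the hypothesis $|G| \in \Bbbk^{\times}$ explicit, and gives a cleaner reduction (``$\mathfrak{m}R$ is $G$-maximal $\Leftrightarrow$ each local factor $A_i$ is a field'') than the paper's somewhat terse passage from ``$|G|$ points in the fibre'' to the conclusion.
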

\begin{proof}
Let $\mathfrak{m} \in \MaxSpec R^G$. It is straightforward to check that $\mathfrak{m}R$ is a $G$-stable ideal of $R$, so it remains to verify that it is maximal among all such ideals. Since $\Bbbk$ is algebraically closed, there is a one-to-one correspondence between orbits of points in $\Spec R$ and $G$-maximal ideals of $R$, and by our hypothesis on maximal ideals, these orbits have size $|G|$. Since $\Spec R/\mathfrak{m}R$ is a $G$-stable subvariety of $\Spec R$, if we show that it has dimension $|G|$ over $\Bbbk$ then it follows that $\mathfrak{m}R$ is a $G$-maximal ideal.\\
\indent To this end, consider the natural morphism $\pi : \Spec R \to \Spec R^G$ coming from the inclusion $i : R^G \hookrightarrow R$. Since $G$ is finite, the action of $G$ on $\Spec R$ is closed, and so \cite[Amplification 1.3]{mumford} tells us that $(\Spec R^G, \pi)$ is a geometric quotient of $R$. By \cite[Proposition 0.9]{mumford}, it follows that $\pi$ is flat and finite. Therefore $\pi$ is a flat, finite, dominant morphism between integral schemes, and so writing $p = \Spec R^G/\mathfrak{m}$, we have $|\pi^{-1}(p) | = [\Frac(R) : \Frac(R^G)]$ by \cite[Exercise 5.1.25]{liu}, and this value is equal to $|G|$ by \cite[Proposition 1.1.1]{benson}. But 
\begin{align*}
\pi^{-1}(p) \cong p \times_{\Spec R^G} \Spec R \cong \Spec ( R^G/ \mathfrak{m} \otimes_{R^G} R) \cong \Spec R/\mathfrak{m}R,
\end{align*}
and so $|\Spec R/\mathfrak{m}R| = |G|$. By the preceding paragraph, it now follows that $\mathfrak{m}R$ is a $G$-maximal ideal of $R$.
\end{proof}


\begin{thm} \label{outerazumaya}
Consider a crossed product $T \coloneqq A*G$, where $A$ is a prime noetherian $\Bbbk$-algebra and where $G$ is a finite group. Suppose that the action of $G$ on $A$ is $X$-outer, and that $G$ acts as a group of $\Bbbk$-linear automorphisms on $Z(A)$. Then $T$ is prime noetherian. Moreover, $T$ is Azumaya if and only if
\begin{enumerate}[{\normalfont (1)},leftmargin=30pt,topsep=0pt,itemsep=0pt]
\item $A$ is Azumaya; and
\item $G$ acts freely on $Z(A)$; that is, the stabiliser of every maximal ideal of $Z(A)$ is trivial.
\end{enumerate}
If $T$ is Azumaya, then the ranks of $A$ and $T$ satisfy $\rank T = |G|^2 \rank A$.
\end{thm}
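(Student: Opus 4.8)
The plan is to reduce everything to the fibre criterion of Lemma~\ref{azumayalem}. Primeness of $T$ is the standard fact that a crossed product of a prime ring by an $X$-outer action is prime, and $T$ is noetherian since it is finitely generated as a left $A$-module (free of rank $|G|$). By Lemma~\ref{outercentre} the centre is $C \coloneqq Z(A)^G$; writing $Z \coloneqq Z(A)$, the ring $C$ is a domain. Whenever $A$ is module-finite over $Z$ (which holds in each of the two situations of the equivalence), $T$ is module-finite over $C$ and hence PI, and I can compute $\PIdeg T$ by inverting the nonzero elements of $C$: then $T \otimes_C \Frac C$ is prime PI with centre the field $\Frac C$, hence central simple. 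Using that $T$ is free of rank $|G|$ over $A$, that $[\Frac Z : \Frac C] = |G|$ (as in the proof of Lemma~\ref{Gmaxideal}), and that $A \otimes_Z \Frac Z$ is central simple of degree $d \coloneqq \PIdeg A$, one obtains $\dim_{\Frac C}(T \otimes_C \Frac C) = |G|^2 d^2$, so $\PIdeg T = |G|d$. This also yields $\rank T = (\PIdeg T)^2 = |G|^2 d^2 = |G|^2 \rank A$ once $T$ is known to be Azumaya.

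The common engine for both implications is a description of the fibre $T/\mathfrak m T$ for $\mathfrak m \in \MaxSpec C$. Since $C = Z^G$, the maximal ideals of $Z$ lying over $\mathfrak m$ form a single $G$-orbit $\{\mathfrak n_1, \dots, \mathfrak n_r\}$ with $r = [G : H]$, where $H \coloneqq \Stab_G(\mathfrak n_1)$. As $\mathfrak m \subseteq C$ is $G$-invariant, $\mathfrak m T = \bigoplus_g \mathfrak m A \overline g$ and $T/\mathfrak m T \cong (A/\mathfrak m A) * G$. The orbit decomposition of $Z/\mathfrak m Z$ gives orthogonal central idempotents $\overline e_1, \dots, \overline e_r$ of $A/\mathfrak m A$ that $G$ permutes transitively with stabiliser $H$; the elements $\overline g$ conjugate them, so they form a complete set of conjugate idempotents and the standard matrix-unit argument gives $T/\mathfrak m T \cong M_r(B * H)$, where $B \coloneqq \overline e_1 (A/\mathfrak m A) = A \otimes_Z (Z/\mathfrak m Z)_{\mathfrak n_1}$ is the local factor at $\mathfrak n_1$.

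For sufficiency, suppose $A$ is Azumaya of rank $d^2$ and $G$ acts freely. Then $H = 1$ and $r = |G|$, and Lemma~\ref{Gmaxideal} (together with the finite flat fibre count in its proof) shows $Z/\mathfrak m Z \cong \Bbbk^{|G|}$; hence $A/\mathfrak m A \cong \prod_{i=1}^{|G|} A/\mathfrak n_i A \cong M_d(\Bbbk)^{|G|}$ with $G$ permuting the factors simply transitively, so $B = A/\mathfrak n_1 A \cong M_d(\Bbbk)$ and $T/\mathfrak m T \cong M_{|G|}(M_d(\Bbbk)) \cong M_{|G| d}(\Bbbk)$. As this holds for every $\mathfrak m$, Lemma~\ref{azumayalem} shows $T$ is Azumaya of rank $(|G|d)^2$.

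For necessity, suppose $T$ is Azumaya; then every fibre $T/\mathfrak m T \cong M_{|G|d}(\Bbbk)$ is simple, and since $T$ is projective of constant rank over the domain $C$, $\dim_\Bbbk T/\mathfrak m T = (|G|d)^2$, whence $\dim_\Bbbk B = |H| d^2$. The crucial step is to show $H = 1$. Simplicity of $T/\mathfrak m T \cong M_r(B*H)$ forces $B * H$ to be simple, hence (as the Jacobson radical of $B$ is $H$-stable and nilpotent) $B$ is semisimple. The key observation is that $B$ is connected: it is the fibre of the prime ring $A$ over the single central point $\mathfrak n_1$, and after completion $Z(\widehat A_{\mathfrak n_1}) = \widehat Z_{\mathfrak n_1}$ is local, so $\widehat A_{\mathfrak n_1}$ has no nontrivial central idempotents; any such idempotent of the radical quotient $B$ would lift, so $B$ has none either. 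A connected semisimple algebra is simple, so $B \cong M_{d'}(\Bbbk)$ with $d'^2 = \dim_\Bbbk B = |H|d^2$; but $d' = \PIdeg B \le \PIdeg A = d$, forcing $|H| = 1$. Thus $G$ acts freely, and the $H=1$ computation above then gives $A/\mathfrak n A \cong M_d(\Bbbk)$ for every $\mathfrak n \in \MaxSpec Z$, i.e.\ $A$ is Azumaya. I expect this connectedness argument to be the main obstacle: $X$-outerness only guarantees that $G$ acts faithfully on $\Spec Z$, and it is precisely the connectedness of the local fibre that upgrades faithfulness to freeness and rules out a fibre of the shape $M_d(\Bbbk)^{|H|}$ permuted transitively by a nontrivial stabiliser.
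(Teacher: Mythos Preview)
Your proof is correct and takes a genuinely different route from the paper's. Both arguments establish the fibre criterion of Lemma~\ref{azumayalem}, but they organise the work very differently.

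For the sufficiency direction, the paper does not compute $T/\mathfrak m T$ explicitly. Instead it uses Lemma~\ref{Gmaxideal} together with Passman's prime correspondence for crossed products (Theorems~14.7 and Corollary~14.8 of \cite{passman}) to show directly that $\mathfrak m T$ is a maximal ideal for every $\mathfrak m \in \MaxSpec Z(T)$, and then invokes the Artin--Procesi theorem. Your matrix-units decomposition $T/\mathfrak m T \cong M_{|G|}(A/\mathfrak n_1 A)$ is more explicit and arguably more transparent, and it gives the rank formula for free rather than via a separate PI-degree computation.

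For the necessity direction the difference is more substantial. The paper first proves that $A$ is Azumaya by a separability descent: since $Z(T)=Z(A)^G\subseteq A$, a result of Carvalho makes $A$ separable over $Z(A)^G$, and then Hirata--Sugano gives separability over the intermediate ring $Z(A)$. Once $A$ is known to be Azumaya, $\dim_\Bbbk A/\mathfrak n A$ is constant, and a Chinese Remainder dimension count shows that $\mathfrak n$ has trivial stabiliser if and only if this dimension equals $(r/|G|)^2$. A generic-point argument on the irreducible variety $\Spec Z(A)$ then produces one free point, which by constancy forces all points to be free. This completely sidesteps your connectedness argument.

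Your route, by contrast, never appeals to separability theory: you go straight to the fibre, extract $B*H$, and use simplicity to force $B$ semisimple and then simple. The connectedness step you flag is indeed the crux, and it is valid: since $Z$ is noetherian, $\widehat Z_{\mathfrak n_1}$ is flat over $Z$, so flat base change gives $Z(\widehat A_{\mathfrak n_1})=\widehat Z_{\mathfrak n_1}$; this ring is local, so $\widehat A_{\mathfrak n_1}$ is a single block, and since it is semiperfect the block structure passes to any quotient by an ideal contained in the radical. One small point you elide at the end: once $|H|=1$ you still need to pass from $B\cong M_d(\Bbbk)$ to $A/\mathfrak n_1 A\cong M_d(\Bbbk)$, but this is immediate since $B$ surjects onto $A/\mathfrak n_1 A$ and is simple.

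In short: the paper's necessity argument trades the local connectedness input for the separability machinery of \cite{carvalho} and \cite{hirata} plus a generic-freeness trick; your argument is more self-contained but leans on slightly heavier local ring theory at the one point you correctly identified.
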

\begin{proof}
First note that $T$ is prime by \cite[Corollary 12.6]{passman} and noetherian by \cite[Lemma 1.5.1]{mandr}, and that $Z(A)$ is a domain since $A$ is prime, and is noetherian by \cite[13.7.10]{mandr} since $A$ is Azumaya and noetherian. \\ 
\indent Throughout this proof, we will be concerned with maximal ideals lying in a number of different rings: it will be our convention to write $\mathfrak{m}, \mathfrak{n}$, and $M$ for maximal ideals in $Z(T), Z(A),$ and $T$, respectively. \\
\indent $(\Rightarrow)$ Assume that $T$ is Azumaya. Since $G$ acts $X$-outer, $Z(T) = Z(A)^G \subseteq A$ by Lemma \ref{outercentre}. Therefore, \cite[Proposition 2.4]{carvalho} implies that $A$ is a separable extension of $Z(A)^G$. We then have a chain of inclusions $Z(A)^G \subseteq Z(A) \subseteq A$, and so $A$ is a separable extension of $Z(A)$ by \cite[Proposition 2.5]{hirata}; that is, $A$ is Azumaya. \\
\indent It remains to show that (2) holds, and that if $\rank T = r^2$ then $\rank A = (r/|G|)^2$. We establish these simultaneously. We claim that a maximal ideal $\mathfrak{n}$ of $Z(A)$ has trivial stabiliser if and only if $\dim_\Bbbk A/\mathfrak{n}A = (r/|G|)^2$. So suppose that $\mathfrak{n} \in \MaxSpec Z(A)$. Since $Z(A)$ is a module-finite extension of $Z(A)^G$, \cite[Lemma 1.4.2]{benson} implies that $\mathfrak{n} \cap Z(A)^G$ is a maximal ideal of $Z(A)^G = Z(T)$. Therefore, as $T$ is Azumaya, $(\mathfrak{n} \cap Z(A)^G)T$ is a maximal ideal of $T$. Writing 
\begin{align*}
\mathfrak{n}' = \bigcap_{g \in G} g \cdot \mathfrak{n},
\end{align*}
which is the intersection of $|G|/|\Stab_G(\mathfrak{m})|$ maximal ideals of $A$, we have $\mathfrak{n} \cap Z(A)^G \subseteq \mathfrak{n}'$. Maximality of $(\mathfrak{n} \cap Z(A)^G)T$ forces $(\mathfrak{n} \cap Z(A)^G)T = \mathfrak{n}' T$, and then intersecting down with $A$ shows that
\begin{align*}
(\mathfrak{n} \cap Z(A)^G)A = \mathfrak{n}'.
\end{align*}
Since $\mathfrak{n}'$ is $G$-stable and $T$ is Azumaya of rank $r^2$, we have
\begin{align*}
\dim_\Bbbk \frac{A}{\mathfrak{n}'} * G = \dim_\Bbbk \frac{T}{\mathfrak{n}' T} = r^2,
\end{align*} 
and so $\dim_\Bbbk A/\mathfrak{n'} A = r^2/ |G|$. Now, since $A$ is Azumaya, there is some $\ell \in \mathbb{N}$ such that $A/(g \cdot \mathfrak{n}) A \cong M_{\ell}(\Bbbk)$ for each $g \in G$. Then, since $\mathfrak{n'} A$ is the intersection of $|G|/|\Stab_G(\mathfrak{n})|$ maximal ideals of $A$, the Chinese Remainder Theorem implies that
\begin{align*}
\frac{A}{\mathfrak{n}'} \cong \underbrace{M_{\ell}(\Bbbk) \times \dots \times M_{\ell}(\Bbbk)}_{|G|/|\Stab_G(\mathfrak{n})| \text{ copies}},
\end{align*}
so that, taking dimensions on both sides,
\begin{align*}
\frac{r^2}{|G|} = \frac{|G|}{|\Stab_G(\mathfrak{n})|} \ell^2.
\end{align*}
Therefore $\mathfrak{n}$ has trivial stabiliser if and only if $\ell^2 = r^2/|G|^2$, which happens if and only if $\dim_\Bbbk A/\mathfrak{n}A = (r/|G|)^2$, as claimed. \\
\indent It follows that if we can find a single maximal ideal of $Z(A)$ with trivial stabiliser then $A$ has rank $(r/|G|)^2$, and since $A$ is Azumaya, 
this will imply that every maximal ideal of $Z(A)$ has trivial stabiliser. \\
\indent To this end, write $X = \Spec Z(A)$, which is an irreducible affine variety. Since $G$ acts $X$-outer it acts faithfully on $Z(A)$, so if $g \in G$ is not the identity, then $\{ x \in X \mid g \cdot x = x \}$ is a proper subvariety of $X$ and so has strictly smaller dimension than $X$. But $X$ is irreducible and so cannot be a finite union of subvarieties of strictly smaller dimension, and so some point of $X$ lies outside of  $\bigcup_{g \in G} \{ x \in X \mid g \cdot x = x \}$. That is, there exists some maximal ideal of $Z(A)$ having trivial stabiliser, completing the proof of necessity. \\
\indent $(\Leftarrow)$ Now suppose that (1) and (2) both hold. Seeking to prove that $T$ is Azumaya, first note that by Lemma \ref{outercentre}, we have $Z(T) = Z(A)^G$. Let $\mathfrak{m}$ be a maximal ideal of $Z(T)$. Then, since $Z(A)$ is a noetherian domain and the stabiliser of every maximal ideal of $Z(A)$ is trivial, Lemma \ref{Gmaxideal} implies that $\mathfrak{m}Z(A)$ is a $G$-maximal ideal of $Z(A)$. By the Azumaya property of $A$, we find that $\mathfrak{m}A$ is a $G$-maximal ideal of $A$. Let $Q$ be a prime of $A$ minimal over $\mathfrak{m}A$; necessarily $Q$ is a maximal ideal of $A$, and so by hypothesis $\Stab_G Q = \{e\}$. Since 
\begin{align*}
\frac{A/\mathfrak{m}A}{Q/\mathfrak{m}A} * \Stab_G Q \cong A/Q
\end{align*}
is prime, \cite[Corollary 14.8]{passman} implies that $(A/\mathfrak{m}A) * G$ is prime, and hence $\mathfrak{m}T$ is a prime ideal of $T$. We claim that $\mathfrak{m}T$ is in fact maximal. So suppose that $I$ is a prime ideal of $T$ with $\mathfrak{m}T \subseteq I$. Intersecting down with $A$ we find that $\mathfrak{m}A \subseteq I \cap A$, where $I \cap A$ is a $G$-stable ideal of $A$, and so $G$-maximality of $\mathfrak{m}A$ forces $\mathfrak{m}A = I \cap A$. By \cite[Theorem 14.7]{passman}, such prime ideals $I$ are in one-to-one correspondence with primes $J$ of $A * \Stab_G Q = A$ with $J \cap A = Q$. This forces $J = Q$, and so there is only one prime $I$ with $\mathfrak{m}T \subseteq I$. Since $\mathfrak{m}T$ is prime, this forces $\mathfrak{m}T = I$, and so $\mathfrak{m}T$ is a maximal ideal of $T$. \\
\indent Now let $M$ be a maximal ideal of $T$, and write $\mathfrak{m} = M \cap Z(T)$. 
By \cite[Lemma III.1.5]{browngoodearl}, $\mathfrak{m}$ is a maximal ideal of $Z(T)$, and moreover it satisfies $\mathfrak{m} T \subseteq M$. By the previous paragraph, $\mathfrak{m}T$ is a maximal ideal of $T$, and so $\mathfrak{m}T = M$. \\
\indent Since $T$ is prime, we use the Artin-Procesi Theorem to show that $T$ is Azumaya, and it suffices to show that $T$ is PI and that every maximal ideal $M$ of $T$ satisfies $(M \cap Z(T)) T = M$, see \cite[Theorem III.1.6]{browngoodearl}. To see the first of these, note that we have a chain of inclusions
\begin{align*}
Z(T) = Z(A)^G \subseteq Z(A) \subseteq A \subseteq T
\end{align*}
where each term is module-finite over the preceding term: indeed, since $Z(A)$ is a noetherian domain we find that $Z(A)$ is finite over $Z(A)^G$ by 
\cite[Proposition 5.4]{leu}, while $A$ is finite over $Z(A)$ since $A$ is Azumaya, and $T$ is finite over $A$ by definition. Therefore $T$ is PI by \cite[Corollary 13.1.13 (iii)]{mandr}. Finally, the preceding paragraph shows that every maximal ideal $M$ of $T$ satisfies $(M \cap Z(T)) T = M$, and so $T$ is Azumaya.
\end{proof}

Note that in the special case where $A * G = A \hash G$ is a skew group ring, the condition that $G$ acts as a group on $Z(A)$ is automatically satisfied. Moreover, if $A$ is commutative then the action is automatically $X$-outer, and so we obtain the statement of Corollary \ref{corollaryintro}. \\
\indent We now give an example of an application of Theorem \ref{outerazumaya}. This demonstrates an approach which we will use in Section \ref{kleinazumaya} to analyse skew group rings coming from quantum Kleinian singularities.

\begin{example} \label{outerexample}
Let $A = \Bbbk[u,v]$, which is clearly a prime noetherian Azumaya algebra. Let $G = S_2 = \langle h \rangle$ act $X$-outer on $A$ via $h \cdot u = v$, $h \cdot v = u$. Observe that a maximal ideal $\langle u-a, v - b \rangle$ of $Z(A) = A$ has nontrivial stabiliser if and only if $a = b$; in particular, the action is not free, and so Theorem \ref{outerazumaya} tells us that $T = A \hash G$ is not Azumaya. Indeed, the maximal ideal $\mathfrak{m} = \langle u+v, uv \rangle$ of $Z(T) = \Bbbk[u+v, uv]$ does not extend to a maximal ideal of $T$ since $\mathfrak{m}T \subsetneq \langle u+v, uv, h-1 \rangle \subsetneq T$, and so $T$ is not Azumaya. More generally, given $\mathfrak{m} = \langle u+v-\alpha, uv - \beta \rangle \in \MaxSpec Z(T)$, one can show that 
\begin{align*}
T/\mathfrak{m}T \cong \left \{
\begin{array}{c l}
M_2(\Bbbk) & \text{if } \alpha^2 \neq 4\beta \\
\Pi(\mathbb{A}_2) & \text{if } \alpha^2 = 4\beta
\end{array}
\right . ,
\end{align*}
where $\Pi(\mathbb{A}_2)$ is the preprojective algebra of an $\mathbb{A}_2$ Dynkin quiver, so that $\mathfrak{m}T$ is not even prime when $\alpha^2 = 4 \beta$. However, if we replace $A$ by $A' = A[(u-v)^{-1}]$ then the maximal ideals of $Z(A') = A'$ have the form $\langle u-a, v - b \rangle$ with $a \neq b$, and Theorem \ref{outerazumaya} guarantees that $T' = A' \hash G$ is Azumaya. Indeed, $Z(A' \hash G) = \Bbbk[u+v, uv][(u-v)^{-2}]$ and
\begin{align*}
(u-v)^{-2} \Bigg( (u+v-\alpha)(u+v + \alpha) - 4\left(uv - \frac{\alpha^2}{4}\right) \hspace{-2pt} \Bigg) = (u-v)^{-2} \left( (u+v)^2 - 4uv \right) = 1
\end{align*}
so that $\MaxSpec Z(T') = \{ \langle u+v - \alpha, uv - \beta \rangle \mid \alpha^2 \neq 4 \beta \}$. As before, one can then show that $T'/\mathfrak{m}T' \cong M_2(\Bbbk)$ for all $\mathfrak{m} \in \MaxSpec Z(T')$, so that $T' = A' \hash G$ is Azumaya by Lemma \ref{azumayalem}.
\end{example}


This example demonstrates a strategy that we will utilise frequently in Section \ref{kleinazumaya}. It will often be the case that a generic maximal ideal of the centre of $A$ has trivial stabiliser, and we can remove those that do not by localising $A$ at a suitable subset of $Z(A)$. The new algebra $A'$ and the group $G$ will then meet the hypothesis of Theorem \ref{outerazumaya}, and so $A' \hash G$ will be Azumaya.


\section{Azumaya skew group algebras where a cyclic group acts inner} \label{innersec}

We now work towards proving Theorem \ref{innerintro}, before providing an application to motivate this result. Let $G$ be a cyclic group acting inner on an algebra $A$. As in the previous section, one would expect that $A$ must necessarily be Azumaya to ensure that $A \hash G$ is Azumaya. Under our hypotheses, this turns out to be both necessary and sufficient. \\
\indent We begin with a lemma that determines the centre of such a skew group ring.

\begin{lem} \label{innercentre}
Suppose that $G = \langle g \mid g^n \rangle$ acts inner on $A$, so there exists a unit $\eta \in A$ such that $g^i \cdot a = \eta^{i} a \eta^{-i}$ for all $a \in A$. Then
\begin{align*}
Z(A \hash G) = Z(A)[(\eta^{-1}g)^{\pm 1}].
\end{align*}
\end{lem}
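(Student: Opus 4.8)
The plan is to introduce the element $z := \eta^{-1}g \in A \hash G$ and to show that it is a \emph{central unit}, after which both inclusions fall out by direct computation. The single driving observation is that, although $g$ does not commute with $A$, the twisted element $z$ does.

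First I would verify that $z$ centralises $A$. For any $a \in A$, using $ga = (g \cdot a)g = \eta a \eta^{-1}g$, one computes
\[
za = \eta^{-1}ga = \eta^{-1}\eta a \eta^{-1}g = a\eta^{-1}g = az,
\]
so $z$ commutes with every element of $A$. Since conjugation by $\eta$ fixes $\eta$, we have $g \cdot \eta = \eta$, and an easy induction then gives $z^k = \eta^{-k}g^k$ for all $k$; in particular $z^n = \eta^{-n}g^n = \eta^{-n}$, which is a unit of $A$, so $z$ is a unit of $A \hash G$. A short check that $zg = gz$ shows that $z$ is in fact central. Separately, any $c \in Z(A)$ satisfies $g \cdot c = \eta c \eta^{-1} = c$, whence $cg = gc$ and $Z(A) \subseteq Z(A \hash G)$. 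Combining these facts yields $Z(A)[z^{\pm 1}] \subseteq Z(A \hash G)$, one of the two inclusions.

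For the reverse inclusion I would take an arbitrary central element and use the fact that $A \hash G$ is a free left $A$-module with basis $G$, writing $w = \sum_{i=0}^{n-1} a_i g^i$ uniquely with $a_i \in A$. Imposing that $w$ commutes with each $b \in A$ and comparing coefficients of $g^i$ gives $b a_i = a_i \eta^i b \eta^{-i}$ for all $b$, equivalently $a_i \eta^i \in Z(A)$. Setting $c_i := a_i\eta^i$ and recalling $z^i = \eta^{-i}g^i$, we get $a_i g^i = c_i \eta^{-i}g^i = c_i z^i$, so $w = \sum_i c_i z^i \in Z(A)[z]$. This proves $Z(A \hash G) \subseteq Z(A)[z] \subseteq Z(A)[z^{\pm 1}]$, giving equality.

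I expect no genuine obstacle: the whole argument rests on the observation that $\eta^{-1}g$ commutes with $A$, which collapses the skew multiplication into ordinary central behaviour. The only points needing minor care are confirming that $\eta$ is fixed by the action (so that the clean power formula $z^k = \eta^{-k}g^k$, and hence the unit computation $z^n = \eta^{-n}$, holds) and invoking uniqueness of the left $A$-module expansion to justify the coefficientwise comparison in the second inclusion.
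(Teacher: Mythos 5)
Your proposal is correct and follows essentially the same route as the paper: both inclusions are verified by direct computation with the key element $\eta^{-1}g$, and the reverse inclusion uses exactly the same coefficientwise comparison $ba_i = a_i\eta^i b\eta^{-i}$ (hence $a_i\eta^i \in Z(A)$) in the free left $A$-module expansion. Your packaging of the forward inclusion---isolating that $z = \eta^{-1}g$ is a central unit via $g\cdot\eta = \eta$ and $z^n = \eta^{-n}$---is a slightly tidier organisation of the paper's single display computation, but not a different argument.
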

\begin{proof}
First observe that if $z \in Z(A)$, $a \in A$ and $g^i \in C_n$ then
\begin{align*}
z \eta^{-1} g \cdot ag^i = z \eta^{-1} \eta a \eta^{-1} g g^i = z a \eta^{-1} g g^i = a z \eta^{-1} g^i g = a g^i \eta^{-i} z \eta^{-1} \eta^{i} g = ag^i \cdot z \eta^{-1} g,
\end{align*}
establishing the inclusion $Z(A)[(\eta^{-1}g)^{\pm 1})] \subseteq Z(A \hash G)$. For the reverse inclusion, consider any element $x = \sum_{i=0}^{n-1} a_i g^i \in Z(A \hash G)$. Then for any $b \in A$ we have
\begin{align*}
\sum_{i=0}^{n-1} b a_i g^i = bx = xb = \sum_{i=0}^{n-1} a_i g^i b = \sum_{i=0}^{n-1} a_i \eta^{i} b \eta^{-i} g^i,
\end{align*}
and so $ba_i =  a_i \eta^{i} b \eta^{-i}$ for each $i$. Equivalently, $b a_i \eta^{i} =  a_i \eta^{i} b$ for each $i$ and for each $b \in A$, and so each $a_i \eta^{i}$ lies in $Z(A)$. Therefore 
\begin{align*}
x = \sum_{i=0}^{n-1} (a_i \eta^{i}) \eta^{-i} g^i = \sum_{i=0}^{n-1} (a_i \eta^{i}) (\eta^{-1} g)^i \in Z(A)[(\eta^{-1}g)^{\pm 1})] \in Z(A)[(\eta^{-1}g)^{\pm 1})],
\end{align*}
as required.
\end{proof}

We can now prove the main result of this section:

\begin{thm} \label{innerazumaya}
Let $A$ be a prime $\Bbbk$-algebra and let $G = \langle g \mid g^n \rangle$ be a cyclic group acting inner on $A$, so there exists a unit $\eta \in A$ such that $g^i \cdot a = \eta^{i} a \eta^{-i}$ for all $a \in A$. Suppose also that $T = A \hash G$ is prime. Then $A$ is Azumaya if and only if $T$ is Azumaya, and in this case they have the same rank.
\end{thm}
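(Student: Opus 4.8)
The plan is to exhibit $T$ as a base change of $A$ along the finite central extension $Z(A) \hookrightarrow Z(T)$, and then deduce the equivalence from the stability of the Azumaya property under base change together with faithfully flat descent. Throughout I would keep in mind that, as $A$ and $T$ are prime, their centres $Z(A)$ and $Z(T)$ are domains, so that ranks (equivalently, PI degrees) are well defined.

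First I would record the structure of the central unit $z \coloneqq \eta^{-1}g$ from Lemma \ref{innercentre}. Since $g^n=1$ and $g\cdot\eta = \eta\eta\eta^{-1}=\eta$, the elements $g$ and $\eta$ commute in $T$, so $z^i = \eta^{-i}g^i$ and $z^n = \eta^{-n}$; moreover $g^n$ acts trivially, forcing $\eta^n\in Z(A)$, so $z^n$ is a unit of $Z(A)$. Lemma \ref{innercentre} then identifies $Z(T)=Z(A)[z]=\bigoplus_{i=0}^{n-1}Z(A)z^i$ as a \emph{free} $Z(A)$-module of rank $n$ (the summands are independent because $z^i$ has nonzero $g^i$-component $\eta^{-i}$ in $T=\bigoplus_i Ag^i$). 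In particular $Z(A)\to Z(T)$ is finite and faithfully flat. The key step is then to observe that, since $\{g^i\}$ and $\{z^i=\eta^{-i}g^i\}$ differ by the units $\eta^{-i}\in A$, we have $T=\bigoplus_{i=0}^{n-1}Ag^i=\bigoplus_{i=0}^{n-1}Az^i$ with $z$ central, whence the multiplication map
\[
A \otimes_{Z(A)} Z(T) \longrightarrow T, \qquad a \otimes c \mapsto ac,
\]
is a well-defined central $Z(T)$-algebra homomorphism that is surjective (as $A$ and $z$ generate $T$) and injective (comparing $A\otimes_{Z(A)}Z(T)=\bigoplus_i Az^i=T$). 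I expect establishing this base-change isomorphism to be the main obstacle; once it is in hand, both implications are formal.

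With $T\cong A\otimes_{Z(A)}Z(T)$ secured, I would finish as follows. If $A$ is Azumaya over $Z(A)$, then $A\otimes_{Z(A)}Z(T)=T$ is Azumaya over $Z(A)\otimes_{Z(A)}Z(T)=Z(T)$ because the Azumaya property (and the computation of the centre) is preserved under base change; by Lemma \ref{innercentre} this base is exactly $Z(T)$, so $T$ is Azumaya. Conversely, since $A$ is a $Z(A)$-module direct summand of $T$ and $T$ is finite over $Z(T)$ (hence over $Z(A)$), $A$ is a finitely generated $Z(A)$-module; faithfully flat descent along $Z(A)\to Z(T)$ then reflects both that $A$ is finitely generated projective over $Z(A)$ and that $\theta_A$ is an isomorphism (as $\theta_A\otimes_{Z(A)}Z(T)$ is identified with the isomorphism $\theta_T$), so $A$ is Azumaya. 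For the ranks, $\Frac Z(T)/\Frac Z(A)$ is a finite field extension, and
\[
T \otimes_{Z(T)} \Frac Z(T) \;\cong\; \big(A \otimes_{Z(A)} \Frac Z(A)\big) \otimes_{\Frac Z(A)} \Frac Z(T)
\]
is the base change of a central simple algebra of dimension $(\PIdeg A)^2$ along a field extension, so it retains that dimension over $\Frac Z(T)$. Hence $\PIdeg T=\PIdeg A$ and the two algebras have equal rank. (As a sanity check in the affine setting one could instead verify directly, via Lemma \ref{azumayalem}, that $T/\mathfrak{m}T\cong M_d(\Bbbk)$ for every $\mathfrak{m}\in\MaxSpec Z(T)$ lying over $\mathfrak{n}\in\MaxSpec Z(A)$, using $T/\mathfrak{m}T\cong(A/\mathfrak{n}A)\otimes_\Bbbk(Z(T)/\mathfrak{m})$; but the descent argument is cleaner and avoids assuming $Z(A)$ is affine.)
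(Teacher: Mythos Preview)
Your argument is correct and takes a genuinely different route from the paper. The paper argues pointwise via Lemma~\ref{azumayalem}: for each $\mathfrak{m}\in\MaxSpec Z(T)$ it constructs an explicit isomorphism $A/(\mathfrak{m}\cap Z(A))A \to T/\mathfrak{m}T$ (using that $\mathfrak{m}$ contains some $\eta^{-1}g-\lambda$), and conversely for each $\mathfrak{n}\in\MaxSpec Z(A)$ it builds a surjection $T\to A/\mathfrak{n}A$ killing a suitable $\mathfrak{m}'\supseteq\mathfrak{n}$. You instead recognise the global isomorphism $T\cong A\otimes_{Z(A)}Z(T)$ and then invoke stability of the Azumaya property under base change and under faithfully flat descent along the finite free extension $Z(A)\hookrightarrow Z(T)$. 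Your approach is cleaner and more conceptual, and it has the advantage of not implicitly using that $Z(A)$ is an affine $\Bbbk$-algebra (the paper's proof appeals to this when asserting that $\mathfrak{m}$ contains an element $\eta^{-1}g-\lambda$ with $\lambda\in\Bbbk$, and again when producing $\mathfrak{m}'$); your argument works for any prime $\Bbbk$-algebra $A$ as stated. The trade-off is that the paper's proof is more elementary and self-contained, relying only on the Artin--Procesi characterisation rather than on descent theory for Azumaya algebras. Your rank computation via base change of central simple algebras along the field extension $\Frac Z(A)\subseteq\Frac Z(T)$ is also correct and more direct than tracking PI degrees through the quotient constructions.
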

\begin{proof}
$(\Rightarrow)$ Assume that $A$ is Azumaya of rank $d^2$. First observe that $T$ is PI: indeed, $A$ is necessarily module-finite over its centre, and since $Z(T) = Z(A)[(\eta^{-1}g)^{\pm 1}]$ where $(\eta^{-1} g)^n  =\eta^{-n}$, it follows that $T$ is also module-finite over its centre, which implies that $T$ is PI by \cite[Corollary 13.1.13 (iii)]{mandr}. Therefore, by Lemma \ref{azumayalem} it suffices to show that $T/\mathfrak{m}T \cong M_d(\Bbbk)$ for each $\mathfrak{m} \in \MaxSpec Z(T)$. \\
\indent To this end, let $\mathfrak{m}$ be a maximal ideal of $Z(T)$. 
Then $\mathfrak{m} \cap Z(A)$ is a maximal ideal of $Z(A)$ by \cite[Lemma 1.4.2]{benson}, and since $A$ is Azumaya, $(\mathfrak{m} \cap Z(A))A$ is a maximal ideal of $A$. Therefore, $B \coloneqq A/(\mathfrak{m} \cap Z(A))A \cong M_d(\Bbbk)$ which, in particular, is a simple ring. Let $\pi: A \to B$ be the natural projection and define a linear map 
\begin{align*}
\theta : B \to T/\mathfrak{m}T, \quad \theta(b) = a + \mathfrak{m}T, \text{ where } \pi(a) = b.
\end{align*}
This map is well-defined in the sense that it does not depend on the choice of the preimage of $b$: if $\pi(a) = b = \pi(a')$, then $a-a' \in (\mathfrak{m} \cap Z(A))A \subseteq \mathfrak{m}T$, so that $a + \mathfrak{m}T = a' + \mathfrak{m}T$. The map $\theta$ is also clearly a ring homomorphism, and we claim that it is surjective. To this end, let $t = \sum_{i=0}^{n-1} a_i g^i \in T$. Since $\Bbbk$ is algebraically closed and $A$ is a finitely generated $\Bbbk$-algebra, $\mathfrak{m}$ contains an element of the form $\eta^{-1}g - \lambda$, where $\lambda \in \Bbbk$. By replacing each instance of $g$ in $t$ by $(g - \lambda \eta) + \lambda \eta$ and noting that $g$ and $\lambda \eta$ commute, we can write $t = \sum_{i=0}^{n-1} a_i ' (g-\lambda \eta)^i$ for some $a_i' \in A$. Therefore $t + \mathfrak{m}T = a_0' + \mathfrak{m}T$ so that $\theta(\pi(a_0')) = a_0' + \mathfrak{m}T = t + \mathfrak{m}T$, and therefore $\pi$ is surjective. Since $B$ is simple and $\theta$ is not the zero map, $\ker \theta$ is trivial, and hence $\theta$ is an isomorphism. Therefore $T/\mathfrak{m}T \cong B \cong M_d(\Bbbk)$, as required. \\
\indent $(\Leftarrow)$ For the converse, suppose that $T$ is Azumaya of rank $d^2$. Since $T$ is PI of PI degree $d$, $A$ is PI and $\PIdeg A \leqslant d$. Using Lemma \ref{azumayalem}, it suffices to show that if $\mathfrak{m}$ is a maximal ideal of $Z(A)$ then $A/\mathfrak{m}A \cong M_d(\Bbbk)$, since this will force $\PIdeg A = d$. So let $\mathfrak{m} \in \MaxSpec Z(A)$. 
Now, $Z(A)$ and $Z(T) = Z(A)[(\eta^{-1}g)^{\pm 1}] \cong Z(A)[t]/\langle t^n - \eta^{-n} \rangle$ are finitely generated $\Bbbk$-algebras, so $Z(T)$ has a maximal ideal of the form $\mathfrak{m}' = \mathfrak{m} + \langle \eta^{-1} g - \beta \rangle$ for some $\beta \in \Bbbk^\times$. Since $T$ is Azumaya of rank $d^2$, $T/\mathfrak{m}'T \cong M_d(\Bbbk)$. Define a linear map 
\begin{align*}
\widetilde{\phi} : T \to A/\mathfrak{m}A, \quad \widetilde{\phi}(ag^i) = a (\eta \beta)^i + \mathfrak{m}A,
\end{align*}
which is easily checked to be a surjective ring homomorphism. It is clear that $\mathfrak{m}'T \subseteq \ker \widetilde{\phi}$, so we get a well-defined surjection $\phi : T/\mathfrak{m}'T \to A/\mathfrak{m}A$. But $T/\mathfrak{m}'T \cong M_d(\Bbbk)$ is a simple ring and $\phi$ is not the zero map, so $\ker \phi$ is trivial and $\phi$ is an isomorphism. Therefore $A/\mathfrak{m}A \cong T/\mathfrak{m}'T \cong M_d(\Bbbk)$, as required.
\end{proof}

\begin{rem}
It is not automatically the case that $T$ is prime under the above hypotheses, even if the action is faithful. For example, set $A = M_2(\mathbb{C})$ and $G = C_2 = \langle g \rangle$, where $g$ acts as conjugation by $\begin{psmallmatrix}0&1\\-1&0\end{psmallmatrix}$. Then
\begin{align*}
x = \begin{pmatrix} 1 & 0 \\ 0 & 1 \end{pmatrix} + \begin{pmatrix} 0 & i \\ -i & 0 \end{pmatrix} \hspace{-1pt} g, \quad y = \begin{pmatrix} 1 & 0 \\ 0 & 1 \end{pmatrix} + \begin{pmatrix} 0 & -i \\ i & 0 \end{pmatrix} \hspace{-1pt} g
\end{align*}
are central elements of $T \coloneqq A \hash G$ which satisfy $xy = 0$. Therefore $xTy = xyT = 0$ and so $T$ is not prime.
\end{rem}

\begin{example}
We provide an application of Theorem \ref{innerazumaya} which will be generalised in Section \ref{kleinazumaya}. Let $A = \Bbbk_{-1}[u^{\pm 1},v^{\pm 1}]$ and $G = C_2$, where the generator $g$ acts via $g \cdot u = -u$, $g \cdot v = -v$. This is an inner action, since $g$ acts as conjugation by $(uv)^{-1}$. We claim that $A \hash G$ is Azumaya; since $A$ is Azumaya, by Theorem \ref{innerazumaya} we only need to show that $A \hash G$ is prime. \\
\indent By \cite[Lemma 6.17]{gw}, it suffices to show that $Q(A \hash G) \cong \Bbbk_{-1}(u,v) \hash G$ is simple. Since $\Bbbk_{-1}(u,v)$ is simple, this is the case if and only if $Z(\Bbbk_{-1}(u,v) \hash G)$ is a field, using \cite[Theorem 1.2]{oinert}. Observing that $g$ acts as conjugation by $(uv)^{-1}$, Lemma \ref{innercentre} implies that
\begin{align*}
Z(\Bbbk_{-1}(u,v) \hash G) = \Bbbk(u^2,v^2)[(uvg)^{\pm 1}] \cong \frac{\Bbbk(u^2,v^2)[t]}{\langle t^2-u^2v^2 \rangle}. 
\end{align*}
Since $t^2 - u^2v^2$ is irreducible over $\Bbbk[u^2,v^2][t]$, it is also irreducible over $\Bbbk(u^2,v^2)[t]$ by Gauss's Lemma. Therefore  $Z(\Bbbk_{-1}(u,v) \hash G)$ is a field, which implies that $A \hash G$ is prime.\\
\indent To close this example, note that $\Bbbk_{-1}[u^{\pm 1}, v]$ is not Azumaya, but one can show that the skew group ring $\Bbbk_{-1}[u^{\pm 1}, v] \hash C_2$ is Azumaya. This does not contradict Theorem \ref{innerazumaya}, since in this case the action is not inner, as $v$ is not invertible.
\end{example}


\section{Examples coming from quantum Kleinian singularities} \label{kleinazumaya}

We now apply the results of the previous two sections to some examples. For each quantum Kleinian singularity $A^G$, the ring $A$ is not Azumaya, so to use our results to show that $A \hash G$ is Azumaya we must first localise $A$ at an appropriate Ore set. Moreover, when we seek to apply Theorem \ref{outerazumaya}, it is often the case that the action does not act freely on $Z(A)$, and so in these cases we localise $A$ again to remove any maximal ideals with nontrivial stabiliser, as in Example \ref{outerexample}. \\
\indent To be able to apply the results in the previous sections, we require a number of properties of the skew group rings $A \hash G$, which we collect in Lemma \ref{AGprime}. In particular, we show that these rings are prime, a property which is already known and follows from \cite[Lemma 3.10]{bhz} and the fact that Auslander's Theorem holds in this setting. However, we wish to give an alternative proof of Auslander's Theorem, and so we cannot appeal to this result. \\
\indent We first determine the symmetric Martindale ring of quotients of the Jordan plane.

\begin{lem} \label{martindalejordan}
Let $A = \Bbbk_J[u,v] = \Bbbk \langle u,v \rangle / \langle uv - vu - u^2 \rangle$. Then $Q_s(A) = A[u^{-1}]$.
\end{lem}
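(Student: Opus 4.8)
The plan is to exploit that $u$ is a normal element of $A$ and then to identify $A[u^{-1}]$ directly through the three defining properties of the symmetric Martindale ring of quotients. From the defining relation one has $uv = (v+u)u$, so in any localisation inverting $u$ we get $uvu^{-1} = v+u$; thus $u$ is normal, conjugation by $u$ being the $\Bbbk$-automorphism $\sigma$ with $\sigma(u) = u$ and $\sigma(v) = v+u$. Hence $S = \{u^n : n \geq 0\}$ is an Ore set in the domain $A$, the localisation $A[u^{-1}]$ exists, and every element of it has the form $au^{-n}$ with $a \in A$, $n \geq 0$. I would also record at the outset that $A[u^{-1}]$ is a domain, being a localisation of a domain; this immediately gives property (2) in the definition of $Q_s(A)$, since $A$ is prime and $A[u^{-1}]$ has no zero divisors.

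The key structural input, and the step I expect to be the main obstacle, is to show that \emph{every nonzero two-sided ideal $I$ of $A$ contains a power of $u$}. I would deduce this from the simplicity of $A[u^{-1}]$. Setting $s = vu^{-1}$ one computes $us = (s+1)u$, and a triangular change of basis (from $\{v^i\}$ to $\{s^i\}$) exhibits $A[u^{-1}]$ as an Ore extension $\Bbbk[u^{\pm 1}][s;D]$, where $D$ is the Euler derivation sending $u^j \mapsto j u^j$ (up to sign). Since $\Bbbk[u^{\pm 1}]$ is a principal ideal domain on which $D$ scales each monomial by its degree, its only $D$-stable ideals are $0$ and the whole ring, so $\Bbbk[u^{\pm1}]$ is $D$-simple; as $\Bbbk$ has characteristic $0$, a standard result on differential operator rings (see \cite{mandr}) then gives that $\Bbbk[u^{\pm 1}][s;D]$ is simple. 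Consequently, for any nonzero ideal $I$, the extension $IA[u^{-1}]$ is a nonzero two-sided ideal of the simple ring $A[u^{-1}]$, hence equals $A[u^{-1}]$; so $1 \in IA[u^{-1}]$, and clearing denominators yields $u^N \in I$ for some $N$.

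With this in hand I would verify the remaining defining properties of $Q_s(A)$ for $A[u^{-1}]$. For property (1), writing $q = au^{-n}$ I take $I = J = u^nA = Au^n$; normality gives $qJ = aA \subseteq A$ and $Iq = u^n(Aa)u^{-n} = \sigma^n(Aa) \subseteq A$. Property (3) is the crux: given nonzero ideals $I,J$ and compatible maps $f\colon {}_AI \to {}_AA$ and $g\colon J_A \to A_A$ with $f(a)b = ag(b)$, I choose $m$ with $u^m \in I$ and set $q = u^{-m}f(u^m) \in A[u^{-1}]$. The left $A$-linear map $h\colon a \mapsto f(a) - aq$ on $I$ satisfies $h(u^m) = 0$ by construction, so left-linearity gives $h(Au^m) = 0$, and then for any $a \in I$ we get $u^m h(a) = h(u^m a) = 0$; as $A[u^{-1}]$ is a domain this forces $h(a) = 0$, i.e. $f(a) = aq$ throughout $I$. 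Symmetrically $g(b) = q'b$ for $q' = g(u^p)u^{-p}$, and the compatibility relation gives $a(q - q')b = 0$ for all $a \in I$, $b \in J$, whence $q = q'$ by primeness. Thus $A[u^{-1}]$ satisfies all three defining properties, and by the uniqueness of the symmetric Martindale ring of quotients we conclude $Q_s(A) = A[u^{-1}]$.
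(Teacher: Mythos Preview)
Your proof is correct, and both arguments rest on the same key fact---that every nonzero two-sided ideal of $A$ contains a power of $u$---but they obtain and deploy it differently. The paper simply cites this fact from Irving's classification of the prime ideals of the Jordan plane, and then argues via two inclusions: $A[u^{-1}] \subseteq Q_s(A)$ follows from a lemma of Passman on normal elements, while conversely any $q \in Q_s(A)$ satisfies $qI \subseteq A$ for some nonzero ideal $I$, hence $qu^k \in A$ for some $k$, giving $q \in A[u^{-1}]$. You instead \emph{prove} the ideal fact by exhibiting $A[u^{-1}]$ as a differential operator ring $\Bbbk[u^{\pm 1}][s;D]$ with $D$-simple coefficient ring, hence simple, and then verify the three defining properties of $Q_s$ directly. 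Your approach is more self-contained and illuminates why Irving's result holds in this case; the paper's approach is shorter but leans on two external references. Either is entirely sound.
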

\begin{proof}
The fact that $A[u^{-1}] \subseteq Q_s(A)$ follows from \cite[Lemma 2.1 (i)]{passman2}. For the other inclusion, let $q \in Q_s(A)$, and write $q = ab^{-1}$, where $a,b \in A$ and $b \neq 0$. By (1), there exists a nonzero ideal $I$ of $A$ such that $qI \subseteq R$. By \cite[Theorem 5.1]{irving2}, every nonzero two-sided ideal of $A$ contains $u^k$ for some $k \geqslant 0$. Then $ab^{-1}u^k \in A$ forcing $b$ to be a power of $u$ as well. That is, $q \in A[u^{-1}]$ and so $Q_s(A) \subseteq A[u^{-1}]$. 
\end{proof}

\begin{lem} \label{AGprime}
Let $A^G$ be a quantum Kleinian singularity. Then $T \coloneqq A \hash G$ is a prime noetherian finitely generated $\Bbbk$-algebra which is Cohen-Macaulay and of GK dimension 2.
\end{lem}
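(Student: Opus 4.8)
The plan is to establish the three listed structural properties together and then isolate primeness, which is the genuine difficulty.

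\textbf{Structure, dimension, and Cohen--Macaulayness.} Each algebra $A$ appearing in cases (i)--(iv) is a noetherian domain, finitely generated over $\Bbbk$, and AS-regular of global dimension $2$; in particular $\GKdim A = 2$, and $A$ is well known to be Auslander-regular and GK-Cohen--Macaulay. Since $G$ is finite, $T = A \hash G$ is free of rank $|G|$ as both a left and a right $A$-module, so $T$ is finitely generated over $\Bbbk$ and is noetherian by \cite[Lemma 1.5.1]{mandr}; moreover $\GKdim T = \GKdim A = 2$ because $T$ is module-finite over $A$. For the Cohen--Macaulay property I would use that $A \subseteq T$ is a Frobenius extension: the projection $T \to A$ onto the coefficient of the identity element, together with the dual bases $\{g\}$ and $\{g^{-1}\}$, exhibits $\Hom_A(T,A) \cong T$ as bimodules. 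Consequently, for every finitely generated (left or right) $T$-module $M$ the standard change-of-rings isomorphism gives $\Ext^i_T(M,T) \cong \Ext^i_A(M,A)$, viewing $M$ as an $A$-module by restriction. Hence $j_T(M) = j_A(M)$ and $\idim T = \idim A = 2$, so $T$ is Gorenstein; and since restriction along the finite extension $A \subseteq T$ preserves GK dimension, the Cohen--Macaulay identity for $A$ yields $\GKdim_T M + j_T(M) = \GKdim_A M + j_A(M) = 2 = \GKdim T$, so $T$ is GK-Cohen--Macaulay of GK dimension $2$.

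\textbf{Primeness: reduction.} Here I would pass to the Goldie quotient ring. As $A$ is a noetherian domain, $Q(A)$ is a division ring and $Q(T) = Q(A) \hash G$; by \cite[Lemma 6.17]{gw} it suffices to prove that $Q(A) \hash G$ is simple. Since $Q(A)$ is simple (hence $G$-simple), \cite[Theorem 1.2]{oinert} reduces this to showing that $Z(Q(A) \hash G)$ is a field. When the induced $G$-action on $Q(A)$ is $X$-outer, Lemma \ref{outercentre} gives $Z(Q(A) \hash G) = Z(Q(A))^G$, which is automatically a field, being the fixed subring of the centre of a division ring under a finite group of automorphisms; this disposes of every case in which no nontrivial element acts $X$-inner. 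These cases I would identify by a direct check: an inner automorphism of $Q(A)$ fixes $Z(Q(A))$ pointwise, and the normal elements of these division rings are scalar multiples of monomials in $u,v$ (for the Jordan plane this uses Lemma \ref{martindalejordan}), so the swap $h$ is always $X$-outer, while a power $g^k$ is $X$-inner precisely when its scaling is realised by conjugation by a monomial.

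\textbf{Primeness: the inner cases and the main obstacle.} The remaining work is the cases in which $G_{X\text{-inn}}$ is nontrivial, namely the dihedral case (iii) with $n$ even (where $g^{n/2}$ acts as conjugation by $uv$) and, when $q$ is a root of unity, the cyclic case (i). Here $G_{X\text{-inn}}$ is a nontrivial cyclic normal subgroup generated by an inner element, and I would compute $Z(Q(A) \hash G)$ explicitly: using Lemma \ref{innercentre} to describe the centre of $Q(A) \hash G_{X\text{-inn}}$ as $Z(Q(A))[(\eta^{-1}g)^{\pm 1}] \cong F[t]/\langle t^m - c \rangle$, and then verifying that $t^m - c$ is irreducible over $F$ via Gauss's Lemma, exactly as in the computation for $\Bbbk_{-1}(u,v) \hash C_2$ preceding this lemma, so that the centre is a field. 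I expect the fiddliest point, and the main obstacle, to be the bookkeeping that combines the $X$-inner subgroup with the $X$-outer quotient $G/G_{X\text{-inn}}$: I would handle it by first establishing simplicity of $Q(A) \hash G_{X\text{-inn}}$ and then applying the $X$-outer analysis to the residual outer action of $G/G_{X\text{-inn}}$. Once $Q(A) \hash G$ is shown to be simple in every case, \cite[Lemma 6.17]{gw} yields that $T$ is prime, completing the proof.
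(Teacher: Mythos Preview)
Your proposal is essentially correct and uses the same toolkit (passage to the Goldie quotient ring, Lemma~\ref{outercentre}, Lemma~\ref{innercentre}, \cite{oinert}, Gauss's Lemma), but the primeness argument is organised differently from the paper's, and the paper's route is shorter. The paper invokes \cite[Corollary 12.6]{passman}, which reduces primeness of $A \hash G$ directly to primeness of $A \hash G_{X\text{-inn}}$; since $G_{X\text{-inn}}$ is cyclic in every case, one then only needs the inner computation (show $Z(Q(A) \hash G_{X\text{-inn}})$ is a field via Lemma~\ref{innercentre} and Gauss, then apply \cite{oinert}), and never has to recombine with an outer quotient. Your route---show $Q(A) \hash G$ simple by first treating the inner layer and then the residual $G/G_{X\text{-inn}}$ crossed product---does work, but what you identify as the ``fiddliest point'' is precisely what Passman's reduction eliminates: you would still need to verify that the induced $G/G_{X\text{-inn}}$-action on $Q(A) \hash G_{X\text{-inn}}$ is $X$-outer (the paper carries out that check later, in Proposition~\ref{bisazumaya}, but for a different purpose), and the simplicity criterion you cite from \cite{oinert} is for abelian groups, so for the dihedral case you would need either \cite{oinert2} or the general fact that an $X$-outer crossed product over a simple ring is simple. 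None of this is wrong, just longer. On the Cohen--Macaulay side, your Frobenius-extension argument is a perfectly good self-contained substitute for the paper's citation of \cite[Proposition~3.3]{bhz}.
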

\begin{proof}
The only property that does not immediately follow from results in the literature is that $T$ is prime, so we establish the other properties first. Since $A$ is either $\Bbbk_q[u,v]$ or $\Bbbk_J[u,v]$, both of which are noetherian, $T$ is noetherian by \cite[Lemma 1.5.11]{mandr}. Finite generation is clear, while $\GKdim T = \GKdim A = 2$ by \cite[Proposition 8.2.9]{mandr}, and the Cohen-Macaulay property follows from \cite[Proposition 3.3]{bhz}. To show that $T$ is prime, we consider each case separately. By \cite[Corollary 12.6]{passman}, it suffices to show that $A \hash G_{X\text{-inn}}$ is prime. \\
\indent We first consider case (i). By \cite[Proposition 2.2]{bergen}, every $X$-inner automorphism of $A$ is given by conjugation by a monomial in $u$ and $v$. In particular, if $q$ is not a root of unity, then $G_{X\text{-inn}}$ is trivial, whence the result. So assume that $q$ has order $k$ and write $\ell = \lcm(n,k)$.
If $g^i$ acts $X$-inner, say as conjugation by some monomial $f \in \Bbbk_q(u,v)$, then since $u^k$ is central we have
\begin{align*}
\omega^{ik} u^k = g^i \cdot u^k = f u^k f^{-1} = u^k.
\end{align*}
Therefore $ik \in n \mathbb{Z}$, so that $i \tfrac{k/\gcd(n,k)}{n/\gcd(n,k)} \in \mathbb{Z}$, and so necessarily $i \in \tfrac{n}{\gcd(n,k)} \mathbb{Z} = \tfrac{\ell}{k} \mathbb{Z}$. But also $g^{\ell/k} \cdot a = (uv)^{\ell/n} a (uv)^{-\ell/n}$ for all $a \in \Bbbk_q(u,v)$, so that $G_{X\text{-inn}} = \langle g^{\ell/k} \rangle$. We therefore need to show that $\Bbbk_q[u,v] \hash \langle g^{\ell/k} \rangle$ is prime, and by \cite[Lemma 6.17]{gw} it suffices to show that $Q(T) \cong \Bbbk_q(u,v) \hash \langle g^{\ell/k} \rangle$ is simple. Since $\Bbbk_q(u,v)$ is simple, by \cite[Theorem 1.2]{oinert} we only need to show that $Z(\Bbbk_q(u,v) \hash \langle g^{\ell/k} \rangle)$ is a field. Since $g^{\ell/k}$ acts inner on $\Bbbk_q(u,v)$ as conjugation by $(uv)^{-\ell/n}$, by Lemma \ref{innercentre} we have
\begin{align*}
Z(\Bbbk_q(u,v) \hash \langle g^{\ell/k} \rangle) = Z(\Bbbk_q(u,v)[(uv)^{-\ell/n} g^{\ell/k}] = \Bbbk(u^k, v^k)[(uv)^{-\ell/n} g^{\ell/k}] \cong \frac{\Bbbk(u^k, v^k)[t]}{\langle t^h - u^{-k} v^{-k} \rangle}.
\end{align*}
Now, $t^h - u^{-k} v^{-k}$ is irreducible over $\Bbbk[u^{-k}, v^{-k}][t]$, and so Gauss's Lemma implies that it is also irreducible over $\Bbbk(u^k, v^k)[t]$. Therefore $\Bbbk(u^k, v^k)[t]/\langle t^h - u^{-k} v^{-k} \rangle$ is a field, and hence so too is $Z(\Bbbk_q(u,v) \hash \langle g^{\ell/k} \rangle)$, from which it follows that $T$ is prime. \\
\mbox{} \hspace{10pt} Now consider case (ii). Since the action of the generator of $S_2$ interchanges $u$ and $v$, it quickly follows from degree considerations that $G_{X\text{-inn}}$ is trivial, and so $T$ is prime since $\Bbbk_{-1}[u,v]$ is prime. \\
\mbox{} \hspace{10pt} For case (iii), the analysis in the previous two paragraphs shows that 
\begin{align*}
\Bbbk_{-1}[u,v] \hash G_{X\text{-inn}} = \left \{
\begin{array}{c l}
\Bbbk_{-1}[u,v] \hash \langle g^{n/2} \rangle & \text{if } n \text{ is even} \\
\Bbbk_{-1}[u,v] & \text{if } n \text{ is odd} 
\end{array}
\right . .
\end{align*}
In either case, we already know that $A \hash G_{X\text{-inn}}$ is prime, and so $T$ is prime. \\
\indent Finally, consider case (iv). Since $Q_s(\Bbbk_J[u,v]) = \Bbbk_J[u^{\pm 1}, v]$, every $X$-inner automorphism is given by conjugation by a power of $u$. But 
\begin{align*}
u^k u u^{-k} = u \neq -u = g \cdot u
\end{align*} 
and so $G_{X\text{-inn}}$ is trivial, from which it follows that $T$ is prime.
\end{proof}

\indent We now show that, for case (i) when $q$ is a root of unity, a suitable localisation of $A \hash G = \Bbbk_q[u,v] \hash C_n$ is Azumaya. Since $A$ is not Azumaya, we instead consider $A' = \Bbbk_q[u^{\pm 1},v^{\pm 1}]$ and write $T' \coloneqq A' \hash G$. Note that $T' = T[u^{-1}, v^{-1}]$, which is prime since $T$ is prime. By combining Theorems \ref{outerazumaya} and \ref{innerazumaya}, we now show that $T'$ is Azumaya.

\begin{prop} \label{bisazumaya}
Suppose that $q$ is a $k$th root of unity. Then the algebra $T' \coloneqq \Bbbk_{q}[u^{\pm 1}, v^{\pm 1}] \hash C_n$ is Azumaya.
\end{prop}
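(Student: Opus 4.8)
The plan is to split the $C_n$-action on $A' = \Bbbk_q[u^{\pm 1}, v^{\pm 1}]$ into its $X$-inner and $X$-outer parts and feed each part into one of the two main theorems. Recall from the proof of Lemma \ref{AGprime} that, with $q$ of order $k$ and $\ell = \lcm(n,k)$, the inner part is $H \coloneqq G_{X\text{-inn}} = \langle g^{\ell/k}\rangle$; I set $m = \ell/k = n/\gcd(n,k)$, so that $d_0 \coloneqq |H| = \gcd(n,k)$ and $C_n/H \cong C_m$, and note $g^m$ has order $d_0$. Since $C_n$ is abelian, $H$ is normal. At the outset I would record that $A'$ is Azumaya, being a quantum torus at a root of unity, with centre $Z(A') = \Bbbk[u^{\pm k}, v^{\pm k}]$.

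First I would apply Theorem \ref{innerazumaya} to $B \coloneqq A' \hash H$. Its generator $g^m$ acts as conjugation by the monomial $\eta = (uv)^s$, where $s$ is chosen with $q^s = \omega^m$ ($\omega$ a primitive $n$th root of unity), so $H$ acts inner. Primeness of $B$ is essentially part of Lemma \ref{AGprime}: there $A \hash H$ is shown prime, and $B$ is its localisation at the central Ore set generated by $u^k$ and $v^k$, hence prime. As $A'$ is Azumaya, Theorem \ref{innerazumaya} gives that $B$ is Azumaya. By Lemma \ref{innercentre}, $Z(B) = Z(A')[w^{\pm 1}]$, where $w = \eta^{-1} g^m$ satisfies $w^{d_0} = \eta^{-d_0} \in Z(A')$.

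Next, using $H \trianglelefteq C_n$, I would present $T'$ as a crossed product $B * (C_n/H)$ of the Azumaya ring $B$ by the cyclic group $C_m \cong C_n/H$, and check the hypotheses of Theorem \ref{outerazumaya}. The quotient acts by $\Bbbk$-algebra automorphisms, hence $\Bbbk$-linearly and as a genuine group on $Z(B)$, and a short computation shows the generator $\bar g$ sends $u^k \mapsto \omega^k u^k$, $v^k \mapsto \omega^{-k} v^k$ and fixes $w$ (the last because $g$ fixes $\eta$ and commutes with $g^m$). For $X$-outerness I would use that an $X$-inner automorphism of a prime PI ring fixes its centre pointwise; since $\bar g^{\,j}$ moves $u^k$ whenever $0 < j < m$ (as $\omega^{jk} = 1$ iff $m \mid j$), no nontrivial coset acts $X$-inner. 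For freeness I would read the same computation on $\MaxSpec Z(B)$: a maximal ideal is a point $(u^k, v^k, w) = (\alpha,\beta,\gamma) \in (\Bbbk^\times)^3$ with $\gamma^{d_0}$ a fixed function of $\alpha,\beta$, and $\bar g^{\,j}$ fixes it only when $\omega^{jk}\alpha = \alpha$, i.e. $m \mid j$; hence all stabilisers are trivial.

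Theorem \ref{outerazumaya} then yields that $T' = B * (C_n/H)$ is Azumaya, as required (and incidentally $\rank T' = m^2 k^2$). The substantive part is the centre bookkeeping: identifying $Z(B)$ via Lemma \ref{innercentre} and checking that the residual $C_m$-action is simultaneously $X$-outer and free. Both reduce to the single observation that $\bar g$ multiplies the torus coordinates $u^k, v^k$ by a nontrivial root of unity while fixing the adjoined central unit $w$; passing from $A$ to the quantum torus $A'$ is exactly what makes $u^k, v^k$ units and the action fixed-point-free. The degenerate cases $\gcd(n,k) \in \{1, n\}$ (pure outer, respectively pure inner) are handled by invoking only one of the two theorems.
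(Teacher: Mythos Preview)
Your proposal is correct and follows essentially the same route as the paper: split off the $X$-inner subgroup $H=\langle g^{\ell/k}\rangle$, apply Theorem~\ref{innerazumaya} to $A'\hash H$, then realise $T'$ as a crossed product of this Azumaya algebra by $C_n/H\cong C_{\ell/k}$ and verify the hypotheses of Theorem~\ref{outerazumaya}. Your argument for $X$-outerness (an $X$-inner automorphism of a prime PI ring fixes the centre, but $\bar g^{\,j}$ moves $u^k$ unless $m\mid j$) is slightly slicker than the paper's explicit conjugation calculation, and the same observation simultaneously handles freeness; just be sure to note that the $\alpha_{\sigma^i}$ form a genuine group action on $Z(B)$ because the cocycle defect in a crossed product is inner and hence trivial on the centre.
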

\begin{proof}
Throughout the proof, it will be convenient to write $\ell = \lcm(n,k)$. We also let $\varepsilon$ be a primitive $\ell$th root of unity, so that we may as well assume that $\omega = \varepsilon^{\ell/n}$ and $q = \varepsilon^{\ell/k}$. \\
\indent Note that $g^{\ell/k}$ acts inner on $A'$, since
\begin{gather*}
g^{\ell/k} \cdot u = \omega^{\ell/k} u = \varepsilon^{\ell^2/nk} u = q^{\ell/n} u = (uv)^{\ell/n} u \gap (uv)^{-\ell/n} \\
g^{\ell/k} \cdot v = \omega^{-\ell/k} v = \varepsilon^{-\ell^2/nk} v = q^{-\ell/n} v = (uv)^{\ell/n} v \gap (uv)^{-\ell/n}
\end{gather*}
and so the subgroup $\langle g^{\ell/k} \rangle$ acts inner on $A'$. By Lemma \ref{AGprime}, $A \hash \langle g^{\ell/k} \rangle$ is prime, and hence so too is the localisation $A' \hash \langle g^{\ell/k} \rangle$. Therefore Theorem \ref{innerazumaya} ensures that $A' \hash \langle g^{\ell/k} \rangle$ is Azumaya. \\
\indent Since $C_n/\langle g^{\ell/k} \rangle \cong C_{\ell/k}$, as in \cite[Lemma 1.5.9]{mandr} we may define an isomorphism
\begin{align*}
T' \cong (A' \hash \langle g^{\ell/k} \rangle) * C_{\ell/k}.
\end{align*}
When defining this isomorphism, we think of $C_{\ell/k}$ as the group $\langle \sigma \mid \sigma^{\ell/k} \rangle$ and write $\overline{C_{\ell/k}} = \{ \overline{\sigma^i} \mid 0 \leqslant i < \ell/k \}$, where $\overline{\sigma^i} = g^i$. The induced automorphisms of $A' \hash \langle g^{\ell/k} \rangle$ are given by
\begin{align*}
\alpha_{\sigma^i}(u) = \omega^i u, \quad \alpha_{\sigma^i}(v) = \omega^{-i} v, \quad \alpha_{\sigma^i}(g^{\ell/k}) = g^{\ell/k}.
\end{align*}
We wish to apply Theorem \ref{outerazumaya} to show that $(A' \hash \langle g^{\ell/k} \rangle) * C_{\ell/k}$ is Azumaya. We first show that the action of $C_{\ell/k}$ on $R \coloneqq A' \hash \langle g^{\ell/k} \rangle$ is $X$-outer, so suppose that $\alpha_{\sigma^r}$ acts as conjugation by an element of $Q_s(R)$. Since $R$ is prime and PI, $Q_s(R) = R(Z(R)^\times)^{-1}$. Moreover, since central elements do not affect the result when conjugating, we can assume that $\alpha_{\sigma^r}$ is given by conjugation by an element of the form $\sum_t \lambda_t u^{i_t} v^{j_t} g^{m_t \ell/k}$, where $i_t, j_t, m_t \in \mathbb{Z}$ and $\lambda_t \in \Bbbk$. We therefore have
\begin{align*}
\alpha_{\sigma^r}(u) \cdot \sum_t \lambda_t u^{i_t} v^{j_t} g^{m_t \ell/k} = \sum_t \lambda_t u^{i_t} v^{j_t} g^{m_t \ell/k} \cdot u,
\end{align*}
or equivalently,
\begin{align*}
\sum_t \omega^r \lambda_t u^{i_t+1} v^{j_t} g^{m_t \ell/k} = \sum_t q^{j_t} \omega^{m_t \ell/k} \lambda_t u^{i_t+1} v^{j_t} g^{m_t \ell/k}.
\end{align*}
Therefore, for each $t$ we require
\begin{align*}
\varepsilon^{r \ell/n} = \varepsilon^{j_t \ell/k + m_t \ell^2/nk} \quad \Rightarrow \quad j_t \tfrac{\ell}{k} + m_t \tfrac{\ell^2}{nk} - r \tfrac{\ell}{n} \in \ell \mathbb{Z}.
\end{align*}
This forces $j_t + m_t \tfrac{\ell}{n} - r \tfrac{k}{n} \in k \mathbb{Z}$, and so necessarily $r \tfrac{k}{n} = r \tfrac{k/\gcd(n,k)}{n/\gcd(n,k)}$ is an integer. Since $k/\gcd(n,k)$ and $n/\gcd(n,k)$ are coprime, we must then have $r \in \tfrac{n}{\gcd(n,k)} \mathbb{Z} = \tfrac{\ell}{k}\mathbb{Z}$. But $0 \leqslant r < \ell/k$ so $r=0$, which means that the induced action is $X$-outer. \\
\indent We must also show that $C_{\ell/k}$ acts as a group on
\begin{align*}
Z(R) = \Bbbk[u^{\pm k}, v^{\pm k}, ((uv)^{-\ell/n} g^{\ell/k})^{\pm 1}] = \Bbbk[u^{\pm k}, ((uv)^{-\ell/n} g^{\ell/k})^{\pm 1}].
\end{align*}
Note that the action of $\alpha_{\sigma^i}$ on the generators $u^k$ and $(uv)^{-\ell/n}g^{\ell/k}$ of the centre (where here $0 \leqslant i < \ell/k$) is given by
\begin{align*}
\alpha_{\sigma^i}(u^k) = \omega^{ik} u^k, \quad \alpha_{\sigma^i}((uv)^{-\ell/n}g^{\ell/k}) = (uv)^{-\ell/n}g^{\ell/k},
\end{align*}
and so $\alpha_e$ is the identity, and $\alpha_{\sigma^i} \alpha_{\sigma^j}$ and $\alpha_{\sigma^{i+j \bmod \ell/k}}$ give rise to the same automorphism of $Z(R)$. \\
\indent Finally, we need to show that $\Stab_{C_{\ell/k}} (\mathfrak{m}) $ is trivial for each $\mathfrak{m} \in \MaxSpec Z(R)$. Note that $\mathfrak{m}$ has form
\begin{align*}
\langle u^k - \alpha, (u v)^{-\ell/n} g^{\ell/k} - \beta \rangle,
\end{align*}
where $\alpha \neq 0 \neq \beta$. Therefore, to show that any such ideal has trivial stabiliser, it suffices to show that if $0 \leqslant r < \ell/k$ and $\alpha_{\sigma^r}(u^k) = u^k$, then $r=0$. If $\alpha_{\sigma^r}(u^k) = u^k$, then $\varepsilon^{r \ell k / n} = 1$, and so we require $\tfrac{rk}{n} \in \mathbb{Z}$. But, as before in the proof, this forces $r \in \tfrac{\ell}{k} \mathbb{Z}$ and hence $r=0$, and so every maximal ideal has trivial stabiliser. Applying Theorem \ref{outerazumaya}, we find that $(\Bbbk_{q}[u^{\pm 1}, v^{\pm 1}] \hash \langle g^{\ell/k} \rangle) * C_{\ell/k}$ is Azumaya, and hence so too is $T'$.
\end{proof}

\indent We now consider case (ii). It is easy to show that the action in this case is $X$-outer, and so we wish to apply Theorem \ref{outerazumaya}. As with case (i), we replace $A = \Bbbk_{-1}[u,v]$ by the Azumaya algebra $A' = \Bbbk_{-1}[u^{\pm 1}, v^{\pm 1}]$. We will also need to localise a second time to ensure that every maximal ideal of the centre has trivial stabiliser, as in Example \ref{outerexample}.

\begin{prop} \label{iiiazumaya}
Write $A' = \Bbbk_{-1}[u^{\pm 1}, v^{\pm 1}]$ and $A'' = A'[(u^2-v^2)^{-1}]$. Then the algebra $T'' \coloneqq A'' \hash S_2$ is Azumaya.
\end{prop}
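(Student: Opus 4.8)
The plan is to apply Theorem \ref{outerazumaya} to $T'' = A'' \hash S_2$, mirroring the strategy of Example \ref{outerexample} in the quantum setting. The first point to record is that $A''$ is in fact a localisation of $A = \Bbbk_{-1}[u,v]$ at \emph{central} elements: since $vu = -uv$, the elements $u^2$, $v^2$, and hence $u^2 - v^2$, are central, and because $u^{-1} = u(u^2)^{-1}$ and $v^{-1} = v(v^2)^{-1}$, we have $A'' = A[(u^2)^{-1},(v^2)^{-1},(u^2-v^2)^{-1}]$. The generating set is $S_2$-stable, as $h \cdot u = v$, $h \cdot v = u$, and $h \cdot (u^2-v^2) = -(u^2-v^2)$, so it forms an Ore set in both $A$ and in $T = A \hash S_2$, giving $T'' \cong T[S^{-1}]$. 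Since $T$ is prime noetherian by Lemma \ref{AGprime}, its localisation $T''$ is again prime noetherian, and I would note in passing that $S_2$ acts as a group of $\Bbbk$-linear automorphisms on $Z(A'')$ automatically, as $T''$ is a skew group ring.

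Next I would verify the two substantive hypotheses of Theorem \ref{outerazumaya}. For the $X$-outer condition, Lemma \ref{AGprime} (case (ii)) already shows that the action of $S_2$ on $A$ is $X$-outer. Because $A''$ is a localisation of the prime PI ring $A$ at central elements, and because for a prime PI ring the symmetric Martindale quotient ring is obtained by inverting all nonzero central elements, one has $Q_s(A'') = Q_s(A)$; thus no new conjugating elements are introduced and the action remains $X$-outer on $A''$. For the Azumaya condition, observe that $A' = \Bbbk_{-1}[u^{\pm 1},v^{\pm 1}]$ is a quantum torus at the root of unity $-1$, hence Azumaya, and $A'' = A'[(u^2-v^2)^{-1}]$ is its localisation at the central element $u^2 - v^2$; since a central localisation of an Azumaya algebra is Azumaya, $A''$ is Azumaya.

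Finally, and this is the crux of the argument, I would check that $S_2$ acts freely on $Z(A'')$. The centre of the quantum torus is $Z(A') = \Bbbk[u^{\pm 2}, v^{\pm 2}]$, on which $h$ acts by swapping $u^2 \leftrightarrow v^2$, so that $Z(A'') = \Bbbk[u^{\pm 2}, v^{\pm 2}][(u^2 - v^2)^{-1}]$. A maximal ideal of $Z(A'')$ then has the form $\langle u^2 - a, v^2 - b \rangle$ with $a, b \in \Bbbk^\times$ and $a \neq b$, the condition $a \neq b$ being exactly what inverting $u^2 - v^2$ enforces. Such an ideal is fixed by $h$ if and only if $a = b$, which is now impossible, so every maximal ideal of $Z(A'')$ has trivial stabiliser and the action is free. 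With all hypotheses verified, Theorem \ref{outerazumaya} yields that $T'' = A'' \hash S_2$ is Azumaya, of rank $\rank T'' = |S_2|^2 \rank A'' = 16$.

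The main obstacle is ensuring that the $X$-outer property descends to the localisation $A''$: one must confirm that inverting $u$, $v$, and $u^2 - v^2$ does not enlarge the symmetric Martindale quotient ring and thereby manufacture new $X$-inner automorphisms. This is resolved by the observation that the localisation is purely central together with the PI description of $Q_s$; once this is in hand, the remaining centre computation and freeness check run in direct parallel to Example \ref{outerexample}.
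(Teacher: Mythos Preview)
Your proof is correct and follows essentially the same approach as the paper: verify the hypotheses of Theorem~\ref{outerazumaya} by noting that $A''$ is Azumaya as a central localisation of the quantum torus, that the action is $X$-outer, and that inverting $u^2-v^2$ removes precisely those maximal ideals of $Z(A')=\Bbbk[u^{\pm 2},v^{\pm 2}]$ with nontrivial stabiliser. The only minor difference is that you justify $X$-outerness on $A''$ via $Q_s(A'')=Q_s(A)$, whereas the paper invokes ``degree considerations'' directly; both are valid, and your extra bookkeeping on primeness of $T''$ and the rank computation are welcome additions.
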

\begin{proof}
Seeking to apply Theorem \ref{outerazumaya}, first note that $A'$ is a prime noetherian $\Bbbk$-algebra which is Azumaya. Localisation preserves these properties, so the same is true of $A''$. Since the generator of $S_2$ interchanges $u$ and $v$, degree considerations imply that the action is $X$-outer, so it remains to show that $G$ acts freely on $\MaxSpec Z(A'')$. Now, using the fact that $Z(RX^{-1}) = Z(R)X^{-1}$ for a noetherian ring $R$ and a multiplicative set $X$ of regular elements contained in $Z(R)$, we find that
\begin{align*}
Z(A'') = \Bbbk[u^{\pm 2}, v^{\pm 2}] [(u^2-v^2)^{-1}] = Z(A)[(u^2-v^2)^{-1}].
\end{align*} 
The maximal ideals of $Z(A'')$ are in one-to-one correspondence with maximal ideals of $Z(A')$ which do not contain $u^{2}-v^{2}$ \cite[Theorem 10.20]{gw}. But $\MaxSpec Z(A') = \{ \langle u^2-\alpha, v^2-\beta \rangle \mid \alpha \neq 0 \neq \beta \}$, and such a maximal ideal $\mathfrak{m}$ has nontrivial stabiliser if and only if $\alpha = \beta$ and this happens if and only if $u^2 - v^2$ lies in $\mathfrak{m}$. Therefore such $\mathfrak{m}$ do not give rise to maximal ideals of $Z(A'')$, so the stabiliser of every maximal ideal of $Z(A'')$ is trivial, and so Theorem \ref{outerazumaya} tells us that $T''$ is Azumaya.
\end{proof}

\indent The final case we consider is case (iii). When showing that a suitable localisation of $A \hash G$ is Azumaya in these cases, the set at which we localise depends on the parity of $n$, so we consider two separate cases. We first consider the case when $n$ is odd, since this is easier due to the action being $X$-outer.

\begin{prop} \label{disazumayaodd}
Suppose that $n$ is odd, and let $A' = \Bbbk_{-1}[u^{\pm 1}, v^{\pm 1}]$ and $A'' =  A'[(u^{2n}-v^{2n})^{-1}]$. Then $T'' \coloneqq A'' \hash D_n$ is Azumaya.
\end{prop}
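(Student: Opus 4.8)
The plan is to apply Theorem \ref{outerazumaya} to the skew group ring $T'' = A'' \hash D_n$, so I must check four things: that $A''$ is prime noetherian Azumaya, that the action of $D_n$ on $A''$ is $X$-outer, that $D_n$ acts as a group of $\Bbbk$-linear automorphisms on $Z(A'')$, and that this action is free on $\MaxSpec Z(A'')$. The first is immediate, since $A' = \Bbbk_{-1}[u^{\pm 1}, v^{\pm 1}]$ is a quantum torus at a root of unity, hence prime noetherian Azumaya, and these properties pass to the central localisation $A''$. The third condition is automatic because $T''$ is a skew group ring, as remarked after Theorem \ref{outerazumaya}. For the $X$-outer condition, I would first note that the action of $D_n$ extends to $A''$: since $g$ fixes $u^{2n}-v^{2n}$ while $h$ sends it to $-(u^{2n}-v^{2n})$, the localising set is $D_n$-stable up to units. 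Then, because $A$, $A'$ and $A''$ are prime PI rings sharing the same symmetric Martindale quotient ring (obtained by inverting the nonzero central elements), an automorphism is $X$-inner for $A''$ exactly when it is $X$-inner for $A = \Bbbk_{-1}[u,v]$; as $n$ is odd, Lemma \ref{AGprime} shows $G_{X\text{-inn}}$ is trivial for $A$, and hence the action on $A''$ is $X$-outer.

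The crux is verifying that $D_n$ acts freely on $\MaxSpec Z(A'')$. I would begin by computing the centre: $Z(A') = \Bbbk[u^{\pm 2}, v^{\pm 2}]$, and since $u^{2n}-v^{2n}$ is central, $Z(A'') = Z(A')[(u^{2n}-v^{2n})^{-1}]$. The maximal ideals of $Z(A')$ have the form $\langle u^2-\alpha, v^2-\beta \rangle$ with $\alpha, \beta \in \Bbbk^\times$, and those of $Z(A'')$ are precisely the ones not containing $u^{2n}-v^{2n}$, i.e.\ those with $\alpha^n \neq \beta^n$. On the generators of the centre the group acts by $g \cdot u^2 = \omega^2 u^2$, $g \cdot v^2 = \omega^{-2} v^2$ and $h \cdot u^2 = v^2$, $h \cdot v^2 = u^2$.

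The key step is then to match the fixed points with the vanishing locus of $u^{2n}-v^{2n}$. A rotation $g^i$ fixes $\langle u^2-\alpha, v^2-\beta \rangle$ iff $\omega^{2i} = 1$; since $n$ is odd, $\omega^2$ is again a primitive $n$th root of unity, forcing $i \equiv 0 \pmod n$, so no nontrivial rotation fixes any such ideal (here $\alpha \neq 0$ is essential). For a reflection $g^i h$ one computes the fixed-point condition to be $\beta = \omega^{2i}\alpha$; as $i$ ranges over $0,\dots,n-1$, the values $\omega^{2i}$ sweep out all $n$th roots of unity, so some reflection fixes the point iff $\beta/\alpha$ is an $n$th root of unity, i.e.\ iff $\alpha^n = \beta^n$. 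Since $u^{2n}-v^{2n}$ evaluates to $\alpha^n - \beta^n$ at this point, the maximal ideals surviving in $Z(A'')$ are exactly those with trivial stabiliser, so the action is free. Theorem \ref{outerazumaya} then gives that $T''$ is prime noetherian and Azumaya. I expect the main obstacle to be precisely this stabiliser analysis, and in particular the use of the oddness of $n$: it makes squaring a bijection on the group of $n$th roots of unity, which is exactly what forces the reflection fixed-locus to coincide with the vanishing of $u^{2n}-v^{2n}$ and makes the chosen localisation the correct one.
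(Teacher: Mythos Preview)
Your proposal is correct and follows essentially the same approach as the paper: verify the hypotheses of Theorem~\ref{outerazumaya} by checking that $A''$ is prime noetherian Azumaya, that the $D_n$-action is $X$-outer, and then carry out the stabiliser computation on $\MaxSpec Z(A'') = \{\langle u^2-\alpha, v^2-\beta\rangle : \alpha^n \neq \beta^n\}$ to see that exactly the ideals killed by the localisation are those with nontrivial stabiliser. The only minor difference is in the $X$-outer step: the paper argues directly (rotations are $X$-outer because $\gcd(2,n)=1$, reflections by degree considerations), whereas you pass through the observation that $Q_s(A) = Q_s(A'')$ and invoke Lemma~\ref{AGprime}; both are valid and equally short.
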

\begin{proof}
We check that the hypotheses of Theorem \ref{outerazumaya} are met. Firstly, as in the proof of Proposition \ref{bisazumaya}, the nontrivial rotations $g^i$ act $X$-outer since the order of $q=-1$ is coprime to $n$, and each of the reflections $g^i h$ acts $X$-outer by degree considerations. Moreover, $A''$ is a prime noetherian Azumaya $\Bbbk$-algebra, so it remains to check that $G$ acts freely on maximal ideals of $Z(A'')$. These are in one-to-one correspondence with the maximal ideals of $Z(A')$ which do not contain $u^{2n}-v^{2n}$ \cite[Theorem 10.20]{gw}. But $\MaxSpec Z(A') = \{ \langle u^2-\alpha, v^2-\beta \rangle \mid \alpha \neq 0 \neq \beta \}$, and such a maximal ideal $\mathfrak{m}$ has nontrivial stabiliser if and only if $\alpha = \omega^{i} \beta$ for some $i$, and this happens if and only if $u^2 - \omega^i v^2$ lies in $\mathfrak{m}$. However, since $\prod_{0 \leqslant i < n} u^2 - \omega^i v^2 = u^{2n} - v^{2n}$, such $\mathfrak{m}$ do not give rise to maximal ideals of $Z(A'')$. Therefore the stabiliser of every maximal ideal of $Z(A'')$ is trivial, and so Theorem \ref{outerazumaya} tells us that $T''$ is Azumaya.
\end{proof}

\indent We now assume that $n$ is even, in which case the action is not $X$-outer, and so we have to combine Theorems \ref{outerazumaya} and \ref{innerazumaya} as in the proof of Proposition \ref{bisazumaya}.

\begin{prop} \label{disazumayaeven}
Suppose that $n$ is even, and let $A' = \Bbbk_{-1}[u^{\pm 1}, v^{\pm 1}]$. Then $T' \coloneqq A' \hash D_n$ is Azumaya.
\end{prop}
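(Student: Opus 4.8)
The plan is to mirror the strategy of Proposition \ref{bisazumaya}: first absorb the $X$-inner part of the action using Theorem \ref{innerazumaya}, then apply Theorem \ref{outerazumaya} to the residual $X$-outer action. Since $n$ is even, $\omega^{n/2} = -1$, so $g^{n/2}$ acts on $A'$ by $u \mapsto -u$, $v \mapsto -v$, which is conjugation by $uv$; thus $\langle g^{n/2}\rangle \cong C_2$ acts inner, and it is a central (hence normal) subgroup of $D_n$. By Lemma \ref{AGprime} the ring $\Bbbk_{-1}[u,v]\hash\langle g^{n/2}\rangle$ is prime, hence so is its localisation $R \coloneqq A'\hash\langle g^{n/2}\rangle$, and Theorem \ref{innerazumaya} then shows $R$ is Azumaya. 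Since $D_n/\langle g^{n/2}\rangle \cong D_{n/2}$, the construction of \cite[Lemma 1.5.9]{mandr} gives $T' \cong R * D_{n/2}$, and it remains to verify the hypotheses of Theorem \ref{outerazumaya} for this crossed product.

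Next I would compute the centre. Writing $w = (uv)^{-1}g^{n/2}$, Lemma \ref{innercentre} gives $Z(R) = \Bbbk[u^{\pm 2}, v^{\pm 2}][w^{\pm 1}]$; since $g^{n/2}$ fixes $uv$ one finds $w^2 = u^{-2}v^{-2}$, so $Z(R) = \Bbbk[u^{\pm 2}, w^{\pm 1}]$ is a Laurent polynomial ring in two variables, with $\MaxSpec Z(R) = \{\langle u^2 - a, w - b\rangle \mid a, b \in \Bbbk^\times\}$. The induced action of $D_{n/2}$ on $Z(R)$ is given on generators by $\bar g \colon u^2 \mapsto \omega^2 u^2,\ w \mapsto w$ and, using $h \cdot (uv) = vu = -uv$, by $\bar h \colon u^2 \mapsto v^2 = u^{-2}w^{-2},\ w \mapsto -w$; a direct check that these respect the relations $\bar g^{\,n/2} = \bar h^2 = (\bar h\bar g)^2 = e$ confirms that $D_{n/2}$ acts as a group of $\Bbbk$-linear automorphisms on $Z(R)$.

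I would then verify the action is $X$-outer, using $Q_s(R) = R(Z(R)^\times)^{-1}$ to reduce any conjugating element to a finite $\Bbbk$-combination $c = \sum_t \lambda_t u^{a_t}v^{b_t}(g^{n/2})^{m_t}$. For a nontrivial rotation $\bar g^{\,i}$ (with $1 \leqslant i < n/2$), the identity $cu = \omega^i u c$ forces $(-1)^{b_t + m_t} = \omega^i$ on the support of $c$; as $\omega^i \neq \pm 1$ this gives $c = 0$. For a reflection $\bar g^{\,i}\bar h$, which sends $u \mapsto \omega^{-i}v$, the identity $cu = \omega^{-i}vc$ relates the coefficients at the monomials $(a-1,b)$ and $(a,b-1)$ by a nonzero scalar; since these are adjacent on the anti-diagonal $\deg_u + \deg_v = a+b-1$, a nonzero coefficient would propagate to infinitely many monomials along this anti-diagonal, contradicting finiteness of $c$. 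Hence every nontrivial element acts $X$-outer.

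The remaining hypothesis is that $D_{n/2}$ acts freely on $\MaxSpec Z(R)$, and here the even case is actually cleaner than Propositions \ref{iiiazumaya} and \ref{disazumayaodd}, in that no further localisation is needed. Indeed, a rotation $\bar g^{\,i}$ fixes $\langle u^2 - a, w - b\rangle$ only if $\omega^{2i} = 1$, i.e.\ $i = 0$, while every reflection sends $w \mapsto -w$ and so could fix the ideal only if $b = -b$, which is impossible as $b \in \Bbbk^\times$. Thus the stabiliser of every maximal ideal is trivial, and Theorem \ref{outerazumaya} shows $R * D_{n/2} \cong T'$ is Azumaya. I expect the main obstacle to be the $X$-outer verification for the reflections: unlike the degree arguments available in cases (ii)--(iii) of Lemma \ref{AGprime}, here $R$ is genuinely noncommutative and carries the extra generator $g^{n/2}$, so one must run the anti-diagonal propagation argument carefully rather than appeal to a naive comparison of degrees.
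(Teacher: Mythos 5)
Your proposal is correct and takes essentially the same route as the paper: decompose $T' \cong (A' \hash \langle g^{n/2} \rangle) * D_{n/2}$, apply Theorem \ref{innerazumaya} to the central inner subgroup, and then check the hypotheses of Theorem \ref{outerazumaya} ($X$-outerness, group action on the centre, freeness on maximal ideals), with your explicit anti-diagonal propagation argument for the reflections correctly supplying the detail the paper dispatches as ``the same argument as in the proof of Proposition \ref{bisazumaya}''. The only slip is a harmless sign: since $(uv)^2 = -u^2v^2$ in $\Bbbk_{-1}[u^{\pm 1}, v^{\pm 1}]$, one has $w^2 = -u^{-2}v^{-2}$ rather than $u^{-2}v^{-2}$, which changes nothing structurally, as $Z(R) = \Bbbk[u^{\pm 2}, w^{\pm 1}]$ is still a Laurent polynomial ring in two variables and your freeness check uses only $w \mapsto -w$ and $u^2 \mapsto \omega^{2i} u^2$.
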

\begin{proof}
Write $m = n/2$. First note that we have an isomorphism
\begin{align*}
T' \cong (A' \hash \langle g^m \rangle ) * D_m,
\end{align*}
where, as shown in the proof of Proposition \ref{bisazumaya}, $A' \hash \langle g^m \rangle$ is a prime noetherian Azumaya algebra with centre $Z = \Bbbk[u^{\pm 2},v^{\pm 2}, uvg] \cong \Bbbk[x^{\pm 1},y^{\pm 1}, z^{\pm 1}]/\langle xy + z^2 \rangle$. Here, we think of $D_m$ as the group $\langle \sigma, \tau \mid \sigma^m, \tau^2, \tau \sigma = \sigma^{-1} \tau \rangle = \{ \sigma^i \tau^j \mid 0 \leqslant i < m, j =0,1 \}$, and our copy of the set $D_m$ inside $T'$ is given by $\overline{\sigma^i \tau^j} = g^i h^j$. Tracing through this isomorphism, one finds that the induced automorphisms of $A' \hash \langle g^m \rangle$ are given by
\begin{gather*}
\alpha_{\sigma^i}(u) = \omega^i u, \quad \alpha_{\sigma^i}(v) = \omega^{-i} v, \quad \alpha_{\sigma^i}(g^m) = g^m, \\
\alpha_{\sigma^i \tau}(u) = -\omega^i v, \quad \alpha_{\sigma^i \tau}(v) = \omega^i u, \quad \alpha_{\sigma^i \tau}(g^m) = g^m.
\end{gather*}
The same argument as in the proof of Proposition \ref{bisazumaya} shows that each of these automorphisms is $X$-outer. Moreover, the restrictions of the above automorphisms to $Z$ are given by
\begin{gather*}
\alpha_{\sigma^i}(x) = \omega^{2i} x, \quad \alpha_{\sigma^i}(y) = \omega^{-2i} y, \quad \alpha_{\sigma^i}(z) = z \\
\alpha_{\sigma^i \tau}(x) = \omega^{-2i} y, \quad \alpha_{\sigma^i \tau}(y) = \omega^{2i} x, \quad \alpha_{\sigma^i \tau}(z) = -z,
\end{gather*}
which is a group action. Finally, if $\mathfrak{m} = \langle u^2 - \alpha, v^2 - \beta, uvg - \gamma \rangle$, where $\alpha \neq 0 \neq \beta$ and $\alpha \beta + \gamma^2 = 0$, is a maximal ideal of $Z$, then
\begin{align*}
\alpha_{\sigma^i \tau^j}(\mathfrak{m}) = \left \{
\begin{array}{cl}
\langle u^2 - \omega^{-2i}\alpha, v^2 - \omega^{2i} \beta, uvg - \gamma \rangle & \text{if } j=0 \\
\langle u^2 - \omega^{-2i} \beta, v^2 - \omega^{2i} \alpha, uvg + \gamma \rangle & \text{if } j=1 \\
\end{array}
\right .
\end{align*}
which in either case is not equal to $\mathfrak{m}$ unless $i=0=j$. Therefore $D_m$ acts freely on $Z(A' \hash \langle g^m \rangle)$ so, by Theorem \ref{outerazumaya}, $ (\Bbbk_{-1}[u^{\pm 1}, v^{\pm 1}] \hash \langle g^m \rangle ) * D_m$ is Azumaya, and hence so too is $T'$.
\end{proof}


\section{Quantum Kleinian singularities are maximal orders}
As discussed in the introduction, our motivation for proving the results in the preceding sections was to prove the following result:

\begin{thm} \label{maxorderthm}
Suppose that $A^G$ is a quantum Kleinian singularity. Then $A \hash G$ is a maximal order.
\end{thm}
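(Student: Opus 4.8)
The plan is to apply Martin's characterisation of maximal orders, Lemma~\ref{maxorderlem}. By Lemma~\ref{AGprime}, $T \coloneqq A \hash G$ is prime noetherian, so it suffices to prove that $\End_T(\mathfrak{p}) = T$ for every nonzero prime ideal $\mathfrak{p}$ of $T$. Write $Z = Z(T)$; as in the proof of Theorem~\ref{outerazumaya}, $T$ is PI and module-finite over $Z$, so $Z$ is a noetherian domain with $\dim Z = \GKdim T = 2$, and $Q(T)$ is obtained by inverting the nonzero elements of $Z$. Fix a nonzero prime $\mathfrak{p}$ and set $S = \End_T(\mathfrak{p}) \subseteq Q(T)$. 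Since $T$ is noetherian and $\mathfrak{p}$ is finitely generated, $S$ is a finitely generated $T$-module with $T \subseteq S \subseteq Q(T)$, and $S/T$ is $Z$-torsion because inverting the nonzero elements of $Z$ turns both $T$ and $S$ into $Q(T)$.

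The first key step is to reduce to height-one primes of $Z$ using the Cohen--Macaulay property from Lemma~\ref{AGprime}. I would first check that $Z$ is integrally closed in each of the cases (i)--(iv) — for instance, in case (ii) one has $Z = \Bbbk[u^2+v^2, u^2v^2]$, a polynomial ring — so that $Z_\mathfrak{q}$ is a discrete valuation ring for every height-one prime $\mathfrak{q}$. Since $T$ is GK--Cohen--Macaulay of GK dimension $2$ and module-finite over the two-dimensional $Z$, it is a maximal Cohen--Macaulay, hence reflexive (i.e.\ $S_2$), $Z$-module; in particular $T = \bigcap_{\height \mathfrak{q} = 1} T_\mathfrak{q}$ inside $Q(T)$, the intersection ranging over height-one primes of $Z$. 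Because central localisation commutes with $\End$ of finitely generated modules, $S_\mathfrak{q} = \End_{T_\mathfrak{q}}(\mathfrak{p}_\mathfrak{q})$. Consequently, if $T_\mathfrak{q}$ is a maximal order over $Z_\mathfrak{q}$ for every height-one prime $\mathfrak{q}$, then $S_\mathfrak{q} = T_\mathfrak{q}$ for all such $\mathfrak{q}$, and therefore $S \subseteq \bigcap_\mathfrak{q} T_\mathfrak{q} = T$, giving $S = T$ as required.

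It then remains to show that $T_\mathfrak{q}$ is a maximal order for each height-one prime $\mathfrak{q}$ of $Z$, and I would split these into two families. The Propositions of Section~\ref{kleinazumaya} show that $T$ becomes Azumaya after inverting a single explicit element of $Z$ (built from the images of $u$, $v$, together with a ramification divisor such as $u^2-v^2$), so for every height-one prime $\mathfrak{q}$ lying outside this finite set of divisors the localisation $T_\mathfrak{q}$ is Azumaya over the discrete valuation ring $Z_\mathfrak{q}$; an Azumaya algebra over a regular local ring is a maximal order, so these $T_\mathfrak{q}$ are maximal. This leaves only the generic points of the finitely many divisors removed in Section~\ref{kleinazumaya}, where $T$ genuinely fails to be Azumaya.

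The main obstacle is to treat these finitely many ramified height-one primes directly. Here I would use that, since $\operatorname{char}\Bbbk = 0$ and $A$ is AS-regular of global dimension $2$, the skew group ring $T = A \hash G$ has finite global dimension equal to that of $A$, namely $2$; hence each $T_\mathfrak{q}$ is a finitely generated, free (so maximal Cohen--Macaulay) $Z_\mathfrak{q}$-module of finite global dimension over the discrete valuation ring $Z_\mathfrak{q}$, and is therefore a hereditary order. A prime hereditary order over a discrete valuation ring is a maximal order precisely when its radical quotient is simple, and I would verify this case by case by identifying $T_\mathfrak{q}$, or its $\mathfrak{q}$-adic completion, explicitly: the ramification along the images of $\{u=0\}$ and $\{v=0\}$ is inherited from the intrinsic ramification of the quantum or Jordan plane, which is itself a maximal order, while the ramification along the fixed divisors of the group action is controlled by the fact that the relevant stabilisers act with trivial homological determinant. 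This local analysis at the ramified divisors is the delicate heart of the argument; everything else is formal once reflexivity and the Azumaya computations of Section~\ref{kleinazumaya} are in hand.
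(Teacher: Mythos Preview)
Your framework rests on $T$ being PI and module-finite over its centre $Z$, but this fails in two of the four cases. In case (iv) the Jordan plane $\Bbbk_J[u,v]$ has centre $\Bbbk$, and since the $C_2$-action is $X$-outer (Lemma~\ref{AGprime}), Lemma~\ref{outercentre} gives $Z(T)=\Bbbk$; the same happens in case (i) when $q$ is not a root of unity. In these cases $T$ is not PI, $Q(T)$ is not obtained by inverting central elements, there are no height-one primes of $Z$ to localise at, and the entire reflexivity/intersection argument collapses. The paper flags exactly this distinction after Lemma~\ref{height2} and handles these two cases by a completely different route, namely Martin's Theorem~\ref{martinthm}, using that the actions are $X$-outer and that the (very few) height-one primes of $A$ can be checked directly (Theorems~\ref{maxordercaseh} and~\ref{maxordercaseb}). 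Your proposal contains no substitute for this.

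Even restricting to the PI cases, your treatment of the ramified height-one primes is only a sketch: you assert that the hereditary order $T_\mathfrak{q}$ is maximal iff its radical quotient is simple and then say you ``would verify this case by case'', invoking trivial homological determinant without carrying out any computation. The paper's argument here is quite different and more concrete: it works with height-one primes of $T$ rather than of $Z$, proves by hand that the specific ideals $\langle uv\rangle$, $\langle u^2-v^2\rangle$, $\langle u^{2n}-v^{2n}\rangle$ are prime in $T$ (Lemmas~\ref{primesforc} and~\ref{primesford}) and are generated by normal nonzerodivisors, and for the remaining height-one primes uses that the localised centre $Z(T'')$ is a UFD (Propositions~\ref{propcentreodd} and~\ref{propcentreeven}) so that $PT''$ is generated by a central element, followed by a short divisibility argument to pull $\End_T(P)\subseteq T''$ back down to $T$. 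Your outline would need substantial additional work at the ramified primes to reach the same conclusion, and as written it does not cover all four cases.
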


The above can be viewed as analogue of a result in \cite[Lemma 1.3]{cbh}, where they prove the result for case (0) by appealing to \cite[Theorem 4.6]{martin}, which requires $A$ to be commutative and so does not apply in our setting. On the other hand, \cite[Theorem 3.13]{martin} allows one to show that skew group rings of certain noncommutative rings are maximal orders, but this also requires the action to be $X$-outer, which is frequently not the case for the algebras of interest to us. Instead, we developed a case-by-case approach which uses the results of the previous sections. As an immediate corollary, we have the following:

\begin{cor} \label{invmaxorder}
Suppose that $A^G$ is a quantum Kleinian singularity. Then $A^G$ is a maximal order.
\end{cor}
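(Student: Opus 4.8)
The plan is to realise $A^G$ as a corner ring of $T \coloneqq A \hash G$ and then deduce the result from Theorem \ref{maxorderthm}. Since $\Bbbk$ has characteristic zero, $e \coloneqq \frac{1}{|G|}\sum_{g \in G} g$ is an idempotent of $T$, and using $ge = eg = e$ for all $g \in G$ one computes that $e\left(\sum_g a_g g\right)e = \left(\frac{1}{|G|}\sum_{g}\sum_h h\cdot a_g\right)e$, so that $eTe = A^G e \cong A^G$ via $\overline{a}\,e \mapsto \overline{a}$. As $A$ is module-finite over $A^G$ in each case of Table 1 and is a domain, $A^G$ is a prime Noetherian $\Bbbk$-algebra, so Lemma \ref{maxorderlem} applies to it. By Theorem \ref{maxorderthm}, $T$ is a prime Noetherian maximal order, and so it suffices to establish the general fact that the corner ring $eTe$ of a prime Noetherian maximal order $T$, at a nonzero idempotent $e$, is again a maximal order.

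To prove this via Lemma \ref{maxorderlem}, I would first identify $Q(eTe) = e\,Q(T)\,e$, which is legitimate since $e\,Q(T)\,e$ is a corner of the simple Artinian ring $Q(T)$ and $eTe$ is an order in it. Given a nonzero ideal $J$ of $S \coloneqq eTe$, I would form its extension $I \coloneqq TJT$, a nonzero ideal of $T$, and note that $J = eJe$ forces $eIe = J$. Since $T$ is a maximal order, $O_\ell(I) = O_r(I) = T$. The remaining task is then to show that every $q \in O_\ell(J) \subseteq e\,Q(T)\,e$ actually lies in $S$, by promoting the left-multiplication action of $q$ on $J$ to an element of $O_\ell(I) = T$ whose corner recovers $q$; the symmetric argument controls $O_r(J)$, giving $\End_S(J) = S$ and hence, by Lemma \ref{maxorderlem}, that $A^G \cong S$ is a maximal order.

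The hard part will be precisely this comparison of orders. One cannot short-circuit it by transferring the maximal-order property along a Morita equivalence between $T$ and $A^G$: here $e$ is \emph{not} a full idempotent, since $T$ and $A^G$ have genuinely different homological behaviour (in the classical case their global dimensions differ, so $TeT \neq T$), and no progenerator is available. Consequently the argument must exploit the maximal-order property of $T$ directly rather than formally, and it is the step of extending the $S$-endomorphism $q$ of $J$ to a genuine element of the left order $O_\ell(I)=T$ that I expect to require the most care.
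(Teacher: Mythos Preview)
Your approach is exactly the paper's: realise $A^G$ as the corner $eTe$ of $T = A \hash G$ at the averaging idempotent, and then deduce the maximal-order property from Theorem \ref{maxorderthm}. The paper's own proof is two lines: it quotes \cite[Lemma 3.1]{bhz} for the isomorphism $e(A\hash G)e \cong A^G$ and then \cite[Corollary 1.7]{marubayshi} for the general fact that a corner of a prime Noetherian maximal order is again a maximal order.

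Where you diverge is in trying to prove that corner-ring statement directly, and the specific mechanism you propose has a gap. For $q \in O_\ell(J) \subseteq eQe$ and $I = TJT$, one does \emph{not} in general get $qI \subseteq I$: a typical element of $qI$ has the form $q\,t\,j\,t'$ with $t,t' \in T$, and since $q = qe$ only interacts with the $e$-component there is no control over $q\,t$ for arbitrary $t \in T$, so the inclusion $q\,TJT \subseteq TJT$ fails and $q$ cannot be ``promoted'' to $O_\ell(I)$ in the way you describe. Correct proofs (as in \cite{marubayshi}) work instead through the Morita context $(T, eTe, Te, eT)$ and compare equivalent orders rather than endomorphism rings of a single extended ideal; this is genuinely more delicate than your sketch suggests. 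Since you have already isolated this as the hard step, the efficient fix is simply to cite \cite[Corollary 1.7]{marubayshi}, exactly as the paper does.
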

\begin{proof}
Letting $e = \tfrac{1}{|G|}\sum_{g \in G} g$, we have $e(A \hash G)e \cong A^G$ by \cite[Lemma 3.1]{bhz}. The result now follows from \cite[Corollary 1.7]{marubayshi}.
\end{proof}

\subsection{Preliminary results}
We first recall and prove some results which will be used in the proof of Theorem \ref{maxorderthm}. We have the following result, which we state in a form most suited to our use:

\begin{thm}[{\cite[Theorem 3.13]{martin}}] \label{martinthm}
Let $A$ be a prime noetherian ring and $G$ a finite group acting on $A$ such that the action of $G$ is $X$-outer. Write $T = A \hash G$ and write
\begin{align*}
\Omega = \bigg \{ \bigcap_{g \in G} g \cdot p_0 \; \Big | \; p_0 \text{ is a reflexive height } 1 \text{ prime ideal of } A \bigg\}.
\end{align*}
Suppose also that the following two conditions hold:
\begin{enumerate}[{\normalfont (1)},leftmargin=30pt,topsep=0pt,itemsep=0pt]
\item $A$ is a maximal order in its quotient ring; and
\item $p T \in \Spec T$ for all $p \in \Omega$.
\end{enumerate}
Then $T$ is a prime maximal order.
\end{thm}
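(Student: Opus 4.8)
The plan is to verify the maximal order property through the characterisation of Lemma \ref{maxorderlem}, having first disposed of primeness, and then to transport the maximal order structure of $A$ across the extension $A \subseteq T$ by means of the $X$-outer prime correspondence. First I would record that $T$ is prime noetherian: primeness follows from the $X$-outer hypothesis via \cite[Corollary 12.6]{passman} (exactly as at the start of the proof of Theorem \ref{outerazumaya}), and $T$ is noetherian as a finite free extension of the noetherian ring $A$. By Lemma \ref{maxorderlem} it then suffices to prove that $\End_T(\mathfrak{P}) = T$, equivalently $O_\ell(\mathfrak{P}) = O_r(\mathfrak{P}) = T$, for every nonzero prime $\mathfrak{P}$ of $T$. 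Invoking the divisorial (reflexive) ideal theory of prime noetherian maximal orders, one reduces this to the \emph{reflexive height $1$} primes: since the reflexive ideals form a group generated by the reflexive height $1$ primes, the maximal order property is detected in codimension $1$, and it is enough to check the defining equalities there. This reduction is essential rather than merely convenient, since hypotheses (1) and (2) only give information about orbit-intersections of reflexive height $1$ primes.

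Next I would set up the prime correspondence. As $T$ is a free $A$-module of rank $|G|$ and the action is $X$-outer, the Lorenz--Passman theory of primes in skew group rings (in the form of \cite[Theorem 14.7]{passman} and the results used in the proof of Theorem \ref{outerazumaya}) gives a height-preserving bijection between $\Spec T$ and the $G$-prime ideals of $A$, under which $\mathfrak{P} \mapsto \mathfrak{P}\cap A$ and $\mathfrak{P} = (\mathfrak{P}\cap A)T$. In particular, a reflexive height $1$ prime $\mathfrak{P}$ of $T$ contracts to a $G$-prime which is the orbit-intersection $p = \bigcap_{g\in G} g\cdot p_0$ of a reflexive height $1$ prime $p_0$ of $A$; that is, $p \in \Omega$ and $\mathfrak{P} = pT$. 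Hypothesis (2) guarantees precisely that each such $pT$ is prime, so these are exactly the candidates one must test, and no reflexive height $1$ prime of $T$ is overlooked.

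Finally I would carry out the endomorphism computation inside $Q(T) \cong Q(A)\hash G$. Write a typical element as $q = \sum_{g} q_g \overline{g}$ with $q_g \in Q(A)$. Because $p$ is $G$-stable we have $\overline{g}\,p = \alpha_g(p)\overline{g} = p\,\overline{g}$, whence $pT = \bigoplus_g p\,\overline{g}$ and $q\,p = \sum_g (q_g p)\,\overline{g}$. The condition $q \in O_\ell(pT)$ applied to $p = p\,\overline{e} \subseteq pT$ forces $q_g p \subseteq p$ for every $g$, i.e.\ $q_g \in O_\ell(p)$ for every $g$. Since $A$ is a maximal order and $p$ is a nonzero $G$-stable ideal of $A$, hypothesis (1) gives $O_\ell(p) = A$, so every $q_g \in A$ and hence $q \in T$; the symmetric argument yields $O_r(pT) = T$. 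Combining the three steps gives $\End_T(\mathfrak{P}) = T$ for all nonzero primes $\mathfrak{P}$, and Lemma \ref{maxorderlem} concludes that $T$ is a prime maximal order.

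I expect the main obstacle to be the second step: pinning down the exact correspondence between the reflexive height $1$ primes of $T$ and the set $\Omega$. One must verify both that contraction sends a reflexive height $1$ prime of $T$ to the orbit-intersection of a reflexive height $1$ prime of $A$ (preservation of height \emph{and} of reflexivity across the extension $A\subseteq T$), and, conversely, that every $p\in\Omega$ with $pT$ prime arises in this way. This requires carefully marrying the $X$-outer prime theory with the divisorial-ideal theory of noncommutative maximal orders; by contrast, the endomorphism calculation of the final paragraph is essentially formal once the correspondence and the reduction to codimension $1$ are in hand.
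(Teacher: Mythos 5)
The paper offers no proof of this statement: Theorem \ref{martinthm} is imported verbatim from \cite[Theorem 3.13]{martin}, so your proposal can only be measured against the theorem itself, and as written it has two genuine gaps. The first is your reduction of Lemma \ref{maxorderlem}'s test to the \emph{reflexive height $1$} primes of $T$. You justify it by invoking ``the divisorial (reflexive) ideal theory of prime noetherian maximal orders,'' i.e.\ that the reflexive ideals form a group generated by the reflexive height $1$ primes; but that is a structure theorem \emph{about} maximal (Krull) orders, and $T$ being a maximal order is precisely the conclusion, so the appeal is circular. Nor is the reduction a background fact for arbitrary prime noetherian rings: Lemma \ref{maxorderlem} requires \emph{all} nonzero primes, and indeed in the paper's own applications (Theorems \ref{bmaxorder}, \ref{maxordercasec} and \ref{maxordercased}) the primes of height at least $2$ have to be disposed of separately, via the Cohen--Macaulay grade argument of Lemma \ref{height2} --- a tool unavailable under the bare hypotheses of Theorem \ref{martinthm}. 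You correctly observe that the reduction is ``essential rather than merely convenient''; that is exactly why leaving it unjustified is a gap, and supplying it (together with showing the relevant reflexive primes of $T$ do sit in height $1$) is a substantial part of the content of Martin's theorem rather than a quotable preliminary.

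The second gap is the prime correspondence. For an $X$-outer action, $P \mapsto P \cap A$ is a finite-to-one surjection from $\Spec T$ onto the $G$-primes of $A$ satisfying incomparability (\cite[Theorem 14.7]{passman}); it is \emph{not} a bijection with $P = (P \cap A)T$. The induced action on $A/p_0$ can be $X$-inner even when the action on $A$ is $X$-outer, so $pT$ can fail to be prime --- this is precisely why condition (2) is a hypothesis and not a consequence (compare Example \ref{outerexample}, where $\mathfrak{m}T$ fails to be prime). Granted (2) and incomparability, one does recover $P = pT$ whenever $P \cap A = p \in \Omega$, since $pT \cap A = p$ by freeness of $T$ over $A$ and both are primes over $p$; but you must still prove that every reflexive height $1$ prime of $T$ contracts to the orbit-intersection of a reflexive height $1$ prime of $A$, i.e.\ that both height and reflexivity transfer across $A \subseteq T$. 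You yourself name this the ``main obstacle'' and leave it unproved, so the proposal is incomplete exactly where the theorem is hardest. By contrast, your closing computation is correct and complete: in $Q(T) \cong Q(A) \hash G$, writing $q = \sum_g q_g \overline{g}$, the containment $q \gap pT \subseteq pT$ together with $G$-stability of $p$ forces $q_g p = q_g \alpha_g(p) \subseteq p$, so $q_g \in O_\ell(p) = A$ by hypothesis (1) (via \cite[Proposition 5.1.4]{mandr}, as in Lemma \ref{maxorderlem}), whence $O_\ell(pT) = T$; the primeness and noetherianity of $T$ at the outset are also correctly handled.
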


\indent It will turn out that the above result is of limited use to us, since our actions are frequently not $X$-outer. Instead, we will use Lemma \ref{maxorderlem} and the following result to show that the rings $T = A \hash G$ are maximal orders.

\begin{lem} \label{height2}
Let $T$ be a finitely generated prime noetherian PI $\Bbbk$-algebra which is Cohen-Macaulay and of GK dimension $d$. Then the (classical) Krull dimension of $T$ is $d$ and $O_\ell(P) = T = O_{r}(P)$ for every prime ideal $P$ of $T$ with $\height P \geqslant 2$.
\end{lem}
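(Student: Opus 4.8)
The plan is to treat the two assertions separately, deducing both from the Cohen--Macaulay hypothesis together with standard facts about affine prime PI algebras. For the dimension claim, I would invoke the theory of affine prime PI $\Bbbk$-algebras: for such an algebra the Gelfand--Kirillov dimension is an integer agreeing with the classical Krull dimension, and such algebras are catenary (Schelter). Applying this to $T$ gives $\clKdim T = \GKdim T = d$, and applying it to the prime factor $T/P$ gives $\clKdim(T/P) = \GKdim(T/P)$; catenarity then yields the height formula $\height P + \GKdim(T/P) = d$, so that $\height P \geq 2$ forces $\GKdim(T/P) \leq d-2$.

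The crucial step is the passage from height to \emph{grade}. Since $T$ is Cohen--Macaulay of GK dimension $d$, the defining identity $\GKdim(T/P) + j(T/P) = d$ — valid for $T/P$ viewed as either a left or a right module — combines with the bound above to give $j(T/P) \geq 2$; that is, $\Hom_T(T/P, T) = 0$ and $\Ext^1_T(T/P, T) = 0$, on both sides.

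I would then exploit the short exact sequence $0 \to P \to T \to T/P \to 0$. Applying $\Hom_T(-,T)$ and using $\Ext^1_T(T,T)=0$ (as $T$ is projective over itself) produces the exact sequence
\begin{align*}
0 \to \Hom_T(T/P, T) \to \Hom_T(T, T) \to \Hom_T(P, T) \to \Ext^1_T(T/P, T) \to 0.
\end{align*}
Vanishing of the outer terms shows that left multiplication is an isomorphism $T \cong \Hom_T(P_T, T_T)$; in particular every right $T$-module map $P \to T$ is left multiplication by an element of $T$. Each $q \in O_\ell(P)$ defines such a map $p \mapsto qp$, and this assignment is injective since $P$ contains a regular element of the prime Goldie ring $T$; hence $q$ coincides with left multiplication by some $t \in T$, forcing $q = t \in T$ and so $O_\ell(P) = T$. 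The identical argument with left and right interchanged, using the left-module version of the grade bound, gives $O_r(P) = T$.

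I expect the \textbf{main obstacle} to be assembling the first step cleanly: one must verify that the catenarity and $\GKdim = \clKdim$ statements apply to $T$ and all its prime factors (each of which is affine prime PI over $\Bbbk$), so that $\height P \geq 2$ genuinely forces $\GKdim(T/P) \leq d-2$ and hence $j(T/P) \geq 2$. Once this height-to-grade passage is secured, the homological computation and the identification of $O_\ell(P)$ and $O_r(P)$ with module endomorphisms are routine.
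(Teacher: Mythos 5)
Your proposal is correct and follows essentially the same route as the paper: both obtain $\clKdim T = d$ and $\GKdim(T/P) = d - \height P$ from the theory of affine prime PI algebras (the paper cites Krause--Lenagan where you name Schelter's catenarity explicitly), use the Cohen--Macaulay identity to get $j(T/P) \geq 2$, and then apply $\Hom_T(-,T)$ to $0 \to P \to T \to T/P \to 0$ to conclude $\Hom_T(P,T) = T$ and hence $O_\ell(P) = T = O_r(P)$. Your extra care in identifying elements of $O_\ell(P)$ with module maps via a regular element of $P$ is a detail the paper leaves implicit, not a different argument.
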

\begin{proof}
That $\clKdim = d$ under these hypotheses is well-known; see \cite[Theorem 10.10]{lenagan}. If $P$ is a height $r$ prime of $T$ with $r \geqslant 2$, then $T/P$ is a finitely generated prime PI algebra and so by \cite[Theorem 10.10]{lenagan}, we find that $\GKdim T/P = \clKdim T/P = d-r$. Now, applying the functor $\Hom_T(-,T)$ to the short exact sequence of right $T$-modules
\begin{align*}
0 \to P \to T \to T/P \to 0,
\end {align*}
we obtain the following portion of a long exact sequence:
\begin{align*}
\Hom_T(T/P,T) \to T \to \Hom_T(P,T) \to \Ext_T^1(T/P,T).
\end {align*}
By the Cohen-Macaulay property of $T$, the grade of $T/P$ satisfies
\begin{align*}
j(T/P) = \GKdim(T) - \GKdim(T/P) = d - (d-r) = r \geqslant 2.
\end{align*}
In particular, $\Hom_T(T/P,T) = 0 = \Ext_T^1(T/P,T)$, and so $\Hom_T(P,T) = T$. Therefore,
\begin{align*}
T \subseteq \End_T(P) \subseteq \Hom_T(P,T) = T,
\end{align*}
and so $O_\ell(P) = \End_T(P) = T$. The proof that $O_r(P) = T$ is symmetrical.
\end{proof}

\indent We now explain why the above is relevant. If $A^G$ is a quantum Kleinian singularity, then Lemma \ref{AGprime} tells us that $T = A \hash G$ is a prime noetherian $\Bbbk$-algebra which is Cohen-Macaulay. Moreover, other than for case (i) when $q$ is not a root of unity and for case (iv), $T$ is PI since it is finite over its centre. Finally, each such skew group ring is a noetherian finitely generated $\Bbbk$-algebra and so by \cite[Theorem 10.10]{lenagan} its (classical) Krull dimension is equal to its GK dimension, namely 2. Therefore, since Lemma \ref{maxorderlem} tells us that $T$ is a maximal order provided that $\End_T(P_T) = T = \End_T({}_T P)$ for all prime ideals $P$, Lemma \ref{height2} implies that we only need to check that $\End_T(P) = T$ for all height 1 primes $P$ of $T = A \hash G$. \\
\indent We now seek to prove Theorem \ref{maxorderthm} for each case in turn, beginning with the cases where the conditions of Theorem \ref{martinthm} are easily verified.

\subsection{Proof of Theorem \ref{maxorderthm} for case (iv)}
That $A \hash G$ is a maximal order in this case follows relatively quickly from Theorem \ref{martinthm} since, as we saw in the proof of Lemma \ref{AGprime} the action in this case is $X$-outer.

\begin{thm} \label{maxordercaseh}
The algebra $T = \Bbbk_J[u,v] \hash C_2$ in case \emph{(iv)} is a maximal order.
\end{thm}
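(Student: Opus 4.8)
The plan is to verify the two hypotheses of Martin's Theorem (Theorem \ref{martinthm}) for $A = \Bbbk_J[u,v]$ and $G = C_2 = \langle g \rangle$, where $g \cdot u = -u$ and $g \cdot v = -v$. As recorded in the proof of Lemma \ref{AGprime}, the action of $C_2$ on $A$ is $X$-outer, so the work reduces to checking the two numbered conditions of that theorem: that $A$ is a maximal order in its quotient ring, and that $pT$ is prime for every $p$ in the set $\Omega$ of intersections of $G$-orbits of reflexive height $1$ primes.

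For the first condition I would invoke the fact that the Jordan plane is a maximal order; this is the one piece of external input, and it can also be recovered from Lemma \ref{maxorderlem} together with a grade computation as in Lemma \ref{height2}, since the unique height $1$ prime is invertible (see below) and $\clKdim A = \GKdim A = 2$. For the second condition the crucial observation is that $\Omega$ is a singleton. The element $u$ is normal in $A$: from the relation one has $vu = u(u+v)$ and $uv = (v-u)u$, so $\langle u \rangle = uA = Au$ is an invertible, hence reflexive, ideal with $A/\langle u\rangle \cong \Bbbk[v]$, and in particular $\langle u \rangle$ is a height $1$ prime. By \cite[Theorem 5.1]{irving2}, every nonzero two-sided ideal of $A$ contains a power of $u$, so every nonzero prime of $A$ contains $u$; it follows that $\langle u \rangle$ is the \emph{unique} height $1$ prime of $A$. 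Since $g \cdot \langle u \rangle = \langle -u \rangle = \langle u \rangle$, the ideal $\langle u \rangle$ is $G$-stable, and therefore $\Omega = \{ \langle u \rangle \}$.

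It then remains to show that $uT$ is a prime ideal of $T = A \hash C_2$. As $uA$ is $G$-stable, $uT$ is a two-sided ideal and $T/uT \cong (A/uA) \hash C_2 \cong \Bbbk[v] \hash C_2$, where $g$ acts by $v \mapsto -v$. Because $\Bbbk[v]$ is a commutative domain, this action is $X$-outer, and so $\Bbbk[v] \hash C_2$ is prime by \cite[Corollary 12.6]{passman} (equivalently, by Corollary \ref{corollaryintro}). Hence $uT$ is prime, both conditions of Theorem \ref{martinthm} hold, and $T$ is a prime maximal order. The step requiring the most care is the identification of $\Omega$: everything hinges on Irving's theorem forcing $\langle u \rangle$ to be the only height $1$ prime, after which condition (2) collapses to the routine primeness of a skew group ring over a polynomial ring.
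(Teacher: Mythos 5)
Your proof is correct and takes essentially the same route as the paper: both verify the two hypotheses of Theorem \ref{martinthm}, citing the known fact that the Jordan plane is a maximal order for condition (1), using Irving's results to identify $\langle u \rangle$ as the unique (reflexive, $G$-stable) height $1$ prime so that condition (2) reduces to primeness of $uT$, and then showing $T/uT \cong \Bbbk[v] \hash C_2$ is prime (the paper via simplicity of the Goldie quotient $\Bbbk(v) \hash C_2$, you via $X$-outerness of the induced action and \cite[Corollary 12.6]{passman} --- both equally valid). The only caution concerns your parenthetical claim that condition (1) could instead be recovered from Lemmas \ref{maxorderlem} and \ref{height2}: Lemma \ref{height2} requires the algebra to be PI, and $\Bbbk_J[u,v]$ is not PI (its centre is just $\Bbbk$), so that aside does not go through as written, but it is not load-bearing since the cited maximal-order property suffices.
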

\begin{proof}
First recall that $A = \Bbbk_J[u,v]$ is a noetherian domain. By \cite[Theorem 2.10]{stafford2}, $\Bbbk_J[u,v]$ is a maximal order, and so condition (1) of Theorem \ref{martinthm} is satisfied. Moreover, by by \cite[Theorem 5.2]{irving2}, the only height one prime of $\Bbbk_J[u,v]$ is $\mathfrak{p} = \langle u \rangle$, which is also reflexive and $G$-stable. We claim that $\mathfrak{p} T$ is a prime ideal of $T$, or equivalently that $T/\mathfrak{p} T$ is prime. Since $\mathfrak{p}$ is $G$-stable, this latter ring is isomorphic to $(\Bbbk_J[u,v]/\mathfrak{p}) \hash C_2 \cong \Bbbk[v] \hash C_2$, and so by \cite[Lemma 6.17]{gw} it suffices to show that $Q(\Bbbk[v] \hash C_2)$ is a simple ring. But $Q(\Bbbk[v] \hash C_2) \cong \Bbbk(v) \hash C_2$, and this ring is simple by \cite[Proposition 7.8.12]{mandr}. Thus $\mathfrak{p} T$ is a prime ideal of $T$. Therefore condition (2) of Theorem \ref{martinthm} holds, so $T$ is a maximal order.
\end{proof}

\subsection{Proof of Theorem \ref{maxorderthm} for case (i)}
By the proof of Lemma \ref{AGprime}, in this case the action is $X$-outer if and only if $q$ is not a root of unity or when the orders of $g$ and $q$ are coprime. We first consider the former case:

\begin{thm} \label{maxordercaseb}
Suppose that $q$ is not a root of unity. Then the algebra $T = \Bbbk_q[u,v] \hash C_n$ in case \emph{(i)} is a maximal order.
\end{thm}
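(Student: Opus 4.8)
The plan is to apply Theorem~\ref{martinthm}, since in this case the action of $C_n$ on $A = \Bbbk_q[u,v]$ is $X$-outer. Recall from the proof of Lemma~\ref{AGprime} that when $q$ is not a root of unity, the group $G_{X\text{-inn}}$ is trivial, so condition ($X$-outer) of Theorem~\ref{martinthm} holds, and $A$ is a prime noetherian ring with $G = C_n$ finite. It therefore remains to verify the two hypotheses (1) and (2) of that theorem: that $A$ is a maximal order in its quotient ring, and that $pT \in \Spec T$ for every $p \in \Omega$, where $\Omega$ consists of the intersections $\bigcap_{g \in G} g \cdot p_0$ over reflexive height $1$ primes $p_0$ of $A$.

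For condition (1), I would appeal to the literature: the quantum plane $\Bbbk_q[u,v]$ is a noetherian maximal order, which follows from a result such as \cite[Theorem 2.10]{stafford2} (as used in the proof of Theorem~\ref{maxordercaseh} for the Jordan plane). For condition (2), the main work is to understand the reflexive height $1$ primes of $\Bbbk_q[u,v]$ and their $G$-orbits. Since $q$ is not a root of unity, the reflexive height $1$ primes are exactly $\langle u \rangle$ and $\langle v \rangle$; these are each $G$-stable, since $g \cdot u = \omega u$ and $g \cdot v = \omega^{-1} v$ only rescale the generators. Thus each $p_0 \in \{\langle u\rangle, \langle v\rangle\}$ already equals its own orbit intersection, so $\Omega = \{\langle u\rangle, \langle v\rangle\}$ (together with $\langle uv\rangle$ only if one needed to account for it, but each orbit is a single prime). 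I would then check that $pT$ is prime for each such $p$ by passing to the quotient, exactly as in the proof of Theorem~\ref{maxordercaseh}: since $p$ is $G$-stable, $T/pT \cong (A/p) \hash C_n \cong \Bbbk[v] \hash C_n$ (or $\Bbbk[u] \hash C_n$), and by \cite[Lemma 6.17]{gw} it suffices to show $Q(\Bbbk[v] \hash C_n) \cong \Bbbk(v) \hash C_n$ is simple. Primeness of $T/pT$ then reduces to showing the centre of this localised skew group ring is a field, which can be handled via Lemma~\ref{innercentre} or directly from the structure of the cyclic action.

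The step I expect to be the main obstacle is the precise determination of the reflexive height $1$ primes of $\Bbbk_q[u,v]$ and verifying that the set $\Omega$ is as claimed; one must be careful that no other reflexive height $1$ primes arise and that each is indeed $G$-stable (so that the orbit intersections collapse to single primes). Once $\Omega$ is pinned down, the verification that $pT$ is prime is a routine quotient computation mirroring the case~(iv) argument. With both conditions (1) and (2) of Theorem~\ref{martinthm} established, that theorem immediately yields that $T = \Bbbk_q[u,v] \hash C_n$ is a prime maximal order, completing the proof.
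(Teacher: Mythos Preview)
Your proposal is correct and follows essentially the same route as the paper's proof: verify the hypotheses of Theorem~\ref{martinthm} using \cite[Theorem 2.10]{stafford2} for condition~(1), identify $\Omega = \{\langle u\rangle, \langle v\rangle\}$ (the paper cites \cite[Exercise 10P]{gw} for the height~$1$ primes when $q$ is not a root of unity), and check primeness of $pT$ by the same quotient argument as in Theorem~\ref{maxordercaseh}. One minor caveat: Lemma~\ref{innercentre} is not the right tool for computing the centre of $\Bbbk(v) \hash C_n$, since that action is not inner on the commutative field $\Bbbk(v)$; use Lemma~\ref{outercentre} (the centre is $\Bbbk(v)^{C_n}$, a field) or appeal to \cite[Proposition 7.8.12]{mandr} as the paper does in case~(iv).
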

\begin{proof}
It is well-known that $A = \Bbbk_q[u,v]$ is a noetherian domain. Moreover, since it is Artin-Schelter regular, it follows from \cite[Theorem 2.10]{stafford2} that it is a maximal order, and so condition (1) of Theorem \ref{martinthm} is satisfied. \\
\indent We now show that condition (2) holds. Since $q$ is not a root of unity,  by \cite[Exercise 10P]{gw} the height 1 prime ideals of $A$ are $\langle u \rangle$ and $\langle v \rangle$. Moreover, these ideals are reflexive and $G$-stable, so $\Omega = \{ \langle u \rangle, \langle v \rangle \}$. The same argument as in the proof of Theorem \ref{maxordercaseh} shows that both $\langle u \rangle T$ and $\langle v \rangle T$ are prime ideals of $T$, so condition (2) of Theorem \ref{martinthm} holds. Therefore $T$ is a maximal order.
\end{proof}

We now turn our attention to the case where $q$ is a root of unity. Despite the fact that the action is $X$-outer when the order of $q$ is coprime to $n$, it is difficult to check condition (2) of Theorem \ref{martinthm} in this case. This is due to the fact that $\Bbbk_q[u,v]$ has many more $G$-prime ideals than when $q$ is not a root of unity, see \cite[\S 8]{irving1}. We therefore take advantage of the fact that $\Bbbk_q[u^{\pm 1}, v^{\pm 1}] \hash C_n$ is Azumaya. 

\begin{thm} \label{bmaxorder}
Suppose that $q$ is a $k$th root of unity. Then the algebra $T = \Bbbk_q[u,v] \hash C_n$ in case \emph{(i)} is a maximal order.
\end{thm}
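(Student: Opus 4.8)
The plan is to reduce to height $1$ primes via Martin's criterion and then exploit the Azumaya property of the localisation. By Lemma \ref{AGprime}, $T = \Bbbk_q[u,v]\hash C_n$ is prime noetherian and Cohen--Macaulay of GK dimension $2$, and since $q$ is a root of unity $T$ is module-finite over its centre and hence PI. Thus by Lemma \ref{maxorderlem} it suffices to prove $\End_T(P) = T$ for every nonzero prime $P$, and by Lemma \ref{height2} this is automatic once $\height P \geqslant 2$. So the whole problem reduces to showing $\End_T(P) = O_\ell(P) = T$ for each height $1$ prime $P$ of $T$.

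I would then split the height $1$ primes according to whether they meet the Ore set $S = \{u^i v^j\}$ used to form the localisation $T' \coloneqq T[u^{-1},v^{-1}] = \Bbbk_q[u^{\pm1},v^{\pm1}]\hash C_n$ of Proposition \ref{bisazumaya}. Since $gu = \omega u g$ and $vu = quv$, the elements $u$ and $v$ are normal in $T$, so $uT = Tu$ and $vT = Tv$ are ideals generated by normal regular elements, hence invertible; consequently $\End_T(uT) = \End_T(vT) = T$ automatically. Moreover $T/uT \cong \Bbbk[v] \hash C_n$ and $T/vT \cong \Bbbk[u] \hash C_n$ are prime (as in the proof of Theorem \ref{maxordercaseh}, using that $\Bbbk(v)\hash C_n$ is simple), so $uT$ and $vT$ are themselves height $1$ primes; and since any prime containing $u$ (resp.\ $v$) must contain $uT$ (resp.\ $vT$), these are the \emph{only} height $1$ primes meeting $S$. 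This disposes of every height $1$ prime $P$ with $P \cap S \neq \emptyset$.

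For a height $1$ prime $P$ with $u,v \notin P$, I would pass to the localisations $T_{(\mathfrak q)}$ at the height $1$ primes $\mathfrak q$ of $T$. Because $T$ is Cohen--Macaulay, hence reflexive over its centre, it satisfies $T = \bigcap_{\height \mathfrak q = 1} T_{(\mathfrak q)}$, so it is enough to show $\End_T(P) \subseteq T_{(\mathfrak q)}$ for every such $\mathfrak q$. For $\mathfrak q \neq P$ this holds because $P$ generates the unit ideal in $T_{(\mathfrak q)}$, so that (as $\End$ commutes with localisation for the finitely generated fractional ideal $\End_T(P)$) it localises into $T_{(\mathfrak q)}$. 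For $\mathfrak q = P$ it suffices that $T_{(P)}$ be a local maximal order; but since $P$ avoids $u$ and $v$, $T_{(P)}$ is a localisation of the Azumaya algebra $T'$, hence Azumaya over the discrete valuation ring obtained by localising $Z(T')$ at the prime below $P$, and is therefore a maximal order. Intersecting over all $\mathfrak q$ yields $\End_T(P) \subseteq T$, with the reverse inclusion clear.

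The main obstacle is exactly this last reconciliation: the Azumaya information is available only on the open locus and degenerates along the two axes $\{u=0\}$ and $\{v=0\}$, which is where primality (Lemma \ref{AGprime}) and invertibility of $uT,vT$ must carry the argument instead. Concretely, the delicate points to pin down are the intersection formula $T = \bigcap_{\height\mathfrak q=1} T_{(\mathfrak q)}$ and the exactness of the relevant localisations in the noetherian PI setting, together with the normality of $Z(T')$ that guarantees the Azumaya algebra $T'$ is a maximal order; once these are in place the two-class analysis above closes the proof.
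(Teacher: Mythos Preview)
Your reduction to height $1$ primes and your treatment of the primes $\langle u\rangle$ and $\langle v\rangle$ match the paper exactly. The divergence is in how you handle a height $1$ prime $P$ not containing $u$ or $v$, and here your proposal has genuine gaps that you yourself flag but do not close. The intersection formula $T = \bigcap_{\height\mathfrak q=1} T_{(\mathfrak q)}$ does \emph{not} follow from $T$ being GK-Cohen--Macaulay; that notion controls grades of $T$-modules, not reflexivity of $T$ as a module over its centre, and you have not computed $Z(T)$ or shown it is normal. Likewise, the meaning of the noncommutative localisation $T_{(\mathfrak q)}$ at a prime of $T$, the claim that $P$ generates the unit ideal there when $\mathfrak q\neq P$, and the assertion that $\End$ commutes with these localisations all require nontrivial PI localisation theory that the paper neither develops nor cites. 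These are not cosmetic issues: without the intersection formula you have no mechanism to descend from the open Azumaya locus back to $T$.

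The paper avoids this machinery entirely with a short elementary argument. It first computes $Z(T')$ explicitly as the Laurent polynomial ring $\Bbbk[u^{\pm\ell},((uv)^{-\ell/n}g^{\ell/k})^{\pm 1}]$, which is a UFD; hence the height $1$ prime $PT'$ of the Azumaya algebra $T'$ is generated by a single central element $z$, giving immediately $\End_T(P)\subseteq \End_{T'}(PT')=\End_{T'}(zT')=T'$. The descent from $T'$ to $T$ is then accomplished by a direct minimality trick: for $t'\in\End_T(P)\subseteq T'$, choose the least $i\geqslant 0$ with $(uv)^i t'\in T$; if $i\geqslant 1$ then $(uv)^i t' P\subseteq\langle u\rangle\cap\langle v\rangle=\langle uv\rangle$, and primality of $\langle u\rangle$ and $\langle v\rangle$ together with $P\not\subseteq\langle u\rangle,\langle v\rangle$ force $(uv)^i t'\in\langle uv\rangle$, contradicting minimality. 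This replaces your appeal to an intersection over all height $1$ localisations by a single concrete divisibility argument using only the two primes $\langle u\rangle$ and $\langle v\rangle$ that you already handled.
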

\begin{proof}
By the discussion after Lemma \ref{height2}, it suffices to show that $O_\ell(P) = T = O_r(P)$ for all height $1$ primes. \\
\indent So suppose that $P$ has height $1$. If $P$ contains $u$, then since $\langle u \rangle$ is a prime ideal of $T$  we have $P = \langle u \rangle$ (that $\langle u \rangle$ is prime follows from the same argument as in the proof of Theorem \ref{maxordercaseb}, which does not make use of $q$ having infinite order). Since $u$ is a normal nonzerodivisor, $O_\ell(P) = T$. Similarly, if $P$ contains $v$ then $P = \langle v \rangle$ is a prime ideal of $T$ and $O_\ell(P) = T$. Write $A' \coloneqq \Bbbk_q[u^{\pm 1},v^{\pm 1}]$ and $T' \coloneqq A' \hash G$, both of which are Azumaya by Proposition \ref{bisazumaya}. Now let $P$ be a height $1$ prime of $T$ which does not contain $u$ or $v$, so that $P$ corresponds to a height 1 prime $PT'$ of $T'$. Since $T'$ is Azumaya, there exists a height 1 prime $\mathfrak{p}$ of $Z(T')$ such that $PT' = \mathfrak{p} T'$. 
Writing $\ell = \lcm(n,k)$, it is not difficult to show that
\begin{align*}
Z(T') = \Bbbk[u^{\pm \ell}, (uv)^{-\ell/n} g^{\ell/k})^{\pm 1} ],
\end{align*}
which is a Laurent polynomial ring in two variables and is hence a UFD. Therefore its height 1 primes are principal, and so $\mathfrak{p} = z Z(T')$ for some $z \in Z(T')$. Since $z$ is a central nonzerodivisor, we have $\End_{T'}(zT') = T'$, and therefore we have a chain of inclusions
\begin{align*}
T \subseteq \End_T(P) \subseteq \End_{T'}(PT') = \End_{T'}(zT') = T'.
\end{align*}
It remains to show that this forces $\End_T(P) = T$. To this end, let $t' \in \End_T(P) \subseteq T'$ and choose $i \geqslant 0$ minimal such that $t \coloneqq (uv)^i t' \in T$. We claim that $i=0$, forcing $t' \in T$. Seeking a contradiction, suppose that $i \geqslant 1$; then 
\begin{align*}
tP \subseteq (uv)^i t' P \subseteq (uv)^i P \subseteq \langle u \rangle.
\end{align*}
Since $P \nsubseteq \langle u \rangle$ and, as noted previously, $\langle u \rangle$ is a prime ideal of $T$, we find that $t \in \langle u \rangle$; similarly, $t \in \langle v \rangle$. Therefore $t \in \langle u \rangle \cap \langle v \rangle = \langle uv \rangle$, contradicting minimality of $i$. Hence $t' \in T$, and so $\End_T(P) = T$. \\
\indent It follows that every nonzero prime ideal of $P$ of $T$ satisfies $O_\ell(P) = T$, and similarly also satisfies $O_r(P) = T$. Thus $T$ is a maximal order.
\end{proof}

\indent We will use the same approach as in the above proof to show that $T = A \hash G$ is a maximal order in cases (ii) and (iii). As before, it suffices to show that $\End_T(P) = T$ for all height 1 primes, which we show to be true for a few carefully chosen primes. These primes are chosen so that when we invert powers of their generators, the resulting algebra $T'$ is Azumaya. We then show that $Z(T')$ is a UFD, which will allow us to deduce that $T \subseteq \End_T(P) \subseteq T'$ for all remaining primes $P$. Finally, we argue that necessarily $\End_T(P) = T$. \\
\indent It turns out that we must work harder to prove Theorem \ref{maxorderthm} for the remaining two cases, mainly because it is more difficult to show that our carefully selected primes are in fact prime. Moreover, showing that the centres of our Azumaya skew group algebras are UFDs is more involved.

\subsection{Proof of Theorem \ref{maxorderthm} for case (ii)}
We have already seen that the action is $X$-outer in this case, but again Theorem \ref{martinthm} is difficult to apply for the same reasons as for case (i). Recall that, by Proposition \ref{iiiazumaya}, $T'' \coloneqq A'' \hash S_2$ is Azumaya, where $A' = \Bbbk_{-1}[u^{\pm 1},v^{\pm 1}]$ and $A'' = A'[(u^2-v^2)^{-1}]$. \\
\indent We write $A = \Bbbk_{-1}[u,v]$ and $G = S_2 = \langle h \rangle$ throughout this subsection, where $ h \cdot u = v, \gap h \cdot v = u$. We will need the following result:

\begin{lem} \label{primesforc}
Let $T = \Bbbk_{-1}[u,v] \hash S_2$, as in case \emph{(ii)}. Then $\langle uv \rangle$ and $\langle u^2-v^2 \rangle$ are prime ideals of $T$.
\end{lem}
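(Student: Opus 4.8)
The plan is to exploit the fact that each of $uv$ and $u^2-v^2$ generates a $G$-stable ideal of $A=\Bbbk_{-1}[u,v]$: indeed $h\cdot uv=vu=-uv$, while $u^2$ and $v^2$ are central in $A$, so $u^2-v^2\in Z(A)$ and $h\cdot(u^2-v^2)=-(u^2-v^2)$. Writing $I$ for the ideal of $A$ generated by either element, $G$-stability gives $\langle uv\rangle_T=I\cdot T$ (respectively $\langle u^2-v^2\rangle_T = I \cdot T$) and hence $T/IT\cong(A/I)\hash S_2$, so it suffices to prove that each quotient skew group ring is prime. As in the proofs of Theorems \ref{maxordercaseh} and \ref{bmaxorder}, I would establish this by passing to Goldie quotient rings via \cite[Lemma 6.17]{gw} and showing the relevant quotient ring is simple. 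The essential new difficulty, compared with the height one primes $\langle u\rangle$ and $\langle v\rangle$ used previously, is that neither quotient $A/I$ is a domain, so the earlier arguments must be adapted in each case.

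For $I=\langle uv\rangle$ the quotient $\bar A\coloneqq A/\langle uv\rangle$ is isomorphic to the commutative reduced ring $\Bbbk[u,v]/\langle uv\rangle$, whose minimal primes are $\langle u\rangle$ and $\langle v\rangle$. Since $h$ interchanges these two primes, $\bar A$ is $G$-prime, the stabiliser of $\langle u\rangle$ is trivial, and $\bar A/\langle u\rangle\cong\Bbbk[v]$ is a domain. I would then invoke \cite[Corollary 14.8]{passman} exactly as in the proof of Theorem \ref{outerazumaya}: primeness of $(\bar A/\langle u\rangle)\hash\Stab_{S_2}\langle u\rangle\cong\Bbbk[v]$ forces $\bar A\hash S_2$ to be prime, and hence $\langle uv\rangle$ is a prime ideal of $T$. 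The point here is that although $\bar A$ fails to be a domain, the transitivity of the $G$-action on its minimal primes is precisely what makes the crossed product prime.

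The case $I=\langle u^2-v^2\rangle$ is more delicate, and I expect it to be the main obstacle. Here $\bar A\coloneqq A/\langle u^2-v^2\rangle$ is free of rank four over $\Bbbk[s]$, where $s$ denotes the common image of $u^2$ and $v^2$; inverting $s$ yields the quaternion algebra over $\Bbbk(s)$ generated by $\bar u,\bar v$ subject to $\bar u^2=\bar v^2=s$ and $\bar v\bar u=-\bar u\bar v$. A short isotropy computation shows that this quaternion algebra splits, so $Q(\bar A)\cong M_2(\Bbbk(s))$ is simple and in particular $\bar A$ is prime. The subtle feature is that, although $h$ acts $X$-outer on $A$, on the quotient it becomes $X$-inner: one checks that $h$ acts on $Q(\bar A)$ as conjugation by the unit $w\coloneqq\bar u+\bar v$, which satisfies $w^2=2s\in Z(Q(\bar A))$. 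I would therefore compute the centre of $Q(\bar A)\hash S_2$ using Lemma \ref{innercentre}, obtaining $Z(Q(\bar A)\hash S_2)=\Bbbk(s)[(w^{-1}h)^{\pm 1}]\cong\Bbbk(s)[t]/\langle t^2-(2s)^{-1}\rangle$. Since $s$ is not a square in $\Bbbk(s)$, this is the quadratic field $\Bbbk(\sqrt{s})$, so \cite[Theorem 1.2]{oinert} shows that $Q(\bar A)\hash S_2$ is simple, and then \cite[Lemma 6.17]{gw} yields that $\bar A\hash S_2$, and hence $\langle u^2-v^2\rangle$, is prime. The crux of the whole lemma is recognising that the swap automorphism becomes inner after passing to this quotient and then identifying the centre of the resulting crossed product as a field.
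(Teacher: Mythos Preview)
Your proof is correct, and while the overall architecture matches the paper's --- reduce to primeness of $(A/I)\hash S_2$, pass to the Goldie quotient via \cite[Lemma 6.17]{gw}, and verify simplicity --- the execution differs in both halves.

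For $\langle uv\rangle$ you take a genuinely shorter route. The paper computes $Q(\bar A)\cong\Bbbk(u)\times\Bbbk(v)$, checks $G$-simplicity, identifies the centre of $(\Bbbk(u)\times\Bbbk(v))\hash S_2$ as a field via Lemma \ref{outercentre}, and then invokes \cite[Theorem 1.2]{oinert}. You instead observe that $S_2$ permutes the two minimal primes of $\bar A$ transitively with trivial stabilisers, so \cite[Corollary 14.8]{passman} reduces primeness of $\bar A\hash S_2$ to primeness of $\Bbbk[v]$, which is immediate. This avoids the quotient-ring computation entirely and is the cleaner argument.

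For $\langle u^2-v^2\rangle$ the two arguments converge on the same endpoint --- showing that $Z(Q(\bar A)\hash S_2)$ is a field and applying \cite[Theorem 1.2]{oinert} --- but the paper works coordinate-wise: it writes down an explicit isomorphism $\Bbbk_{-1}[u^{\pm1},v^{\pm1}]/\langle u^2-v^2\rangle\to M_2(\Bbbk[t^{\pm1}])$, transports the $S_2$-action to explicit formulas on matrix units, and computes the centre by hand as $\Bbbk(t)[x]/\langle x^2+it\rangle$. Your route is more structural: recognising $Q(\bar A)$ as a split quaternion algebra and, crucially, that $h$ becomes \emph{inner} on the quotient (conjugation by $w=\bar u+\bar v$ with $w^2=2s$) lets you read off the centre directly from Lemma \ref{innercentre} as $\Bbbk(s)[t]/\langle t^2-(2s)^{-1}\rangle$. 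The paper's matrix computation implicitly encodes the same inner action, but never names it; your formulation explains \emph{why} the centre takes the form it does and ties the argument back to the machinery of Section \ref{innersec}.
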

\begin{proof}
\indent We first consider $\langle uv \rangle$. Since this ideal is $G$-stable, it suffices to show that the quotient $T/\langle uv \rangle \cong (\Bbbk_{-1}[u,v]/\langle uv \rangle) \hash S_2$ is prime. Equivalently, by \cite[Lemma 6.17]{gw}, we show that its classical quotient ring 
\begin{align*}
Q\big( (\Bbbk_{-1}[u,v]/\langle uv \rangle \hash S_2 \big) \cong Q\big( \Bbbk_{-1}[u,v]/\langle uv \rangle \big) \hash S_2
\end{align*}
is simple. First observe that $Q\big( \Bbbk_{-1}[u,v]/\langle uv \rangle \big) \cong \Bbbk(u) \times \Bbbk(v)$.
Tracing through this isomorphism, we find that the corresponding $S_2$-action is given by
\begin{align*}
h \cdot (f_1(u),f_2(v)) = (f_2(u),f_1(v)),
\end{align*}
and so we wish to show that $\big(\Bbbk(u) \times \Bbbk(v)\big) \hash S_2$ is simple. By \cite[Theorem 1.2 (c)]{oinert}, it suffices to show that $\Bbbk(u) \times \Bbbk(v)$ is $G$-simple, and that the centre of $\big(\Bbbk(u) \times \Bbbk(v)\big) \hash S_2$ is a field. For the first of these, let $I$ be a nonzero $G$-stable ideal of $\Bbbk(u) \times \Bbbk(v)$, and let $0 \neq (f_1(u),f_2(v)) \in I$, where, acting by $h$ if necessary and using $G$-stability, we may assume that $f_1(u) \neq 0$. Multiplying by $(f_1(u)^{-1},0)$, we find that $(1,0) \in I$, and then acting by $h$ shows that $(0,1) \in I$, so that $(1,1) \in I$ and hence $I = \Bbbk(u) \times \Bbbk(v)$. Therefore, $\Bbbk(u) \times \Bbbk(v)$ is $G$-simple. By Lemma \ref{outercentre}, the centre of $\big(\Bbbk(u) \times \Bbbk(v)\big) \hash S_2$ equals $\big(\Bbbk(u) \times \Bbbk(v)\big)^{S_2}$, which is easily seen to be $\{ (f(u),f(v)) \mid f(t) \in \Bbbk(t) \}$,
and this is clearly a field. Therefore $\big(\Bbbk(u) \times \Bbbk(v)\big) \hash S_2$ is simple, and hence $(\Bbbk_{-1}[u,v]/\langle uv \rangle) \hash S_2$ is prime, which shows that $\langle uv \rangle$ is a prime ideal of $T$. \\
\indent We now show that $\langle u^2 - v^2 \rangle$ is a prime ideal of $T$. As before, it suffices to show that the quotient $(\Bbbk_{-1}[u,v]/\langle u^2-v^2 \rangle) \hash S_2$ is prime, or equivalently, that $Q(\Bbbk_{-1}[u,v]/\langle u^2-v^2 \rangle) \hash S_2$ is simple. We first claim that we have an isomorphism
\begin{align*}
Q\left( \frac{\Bbbk_{-1}[u,v]}{\langle u^2-v^2 \rangle}\right) \hspace{-1pt} \hash S_2 \cong M_2(\Bbbk(t)) \hash S_2
\end{align*}
with an appropriate action of $S_2$ on $M_2(\Bbbk(t))$. To this end, define an algebra homomorphism
\begin{gather*}
\phi : \frac{\Bbbk_{-1}[u^{\pm 1},v^{\pm 1}]}{\langle u^2-v^2 \rangle} \to M_2(\Bbbk[t^{\pm 1}]), \\
\phi(u) = 
\begin{pmatrix}
0 & 1 \\
t & 0
\end{pmatrix}, 
\quad
\phi(v) = i
\begin{pmatrix}
0 & 1 \\
-t & 0
\end{pmatrix},
\end{gather*}
where $i^2 = -1$, which is easily checked to be well-defined. Since $\Bbbk_{-1}[u^{\pm 1}, v^{\pm 1}]$ and $ M_2(\Bbbk[t^{\pm 1}])$ are both free modules of rank 4 over $R \coloneqq \Bbbk[u^{\pm 2}]$ and $\phi(R) = \Bbbk[t^{\pm 1}]$ respectively, we see that $\phi$ is an isomorphism. Chasing through the definition of $\phi$, one can verify that it is an isomorphism of $S_2$-modules provided that we define
\begin{align*}
h \cdot e_{11} = e_{22}, \quad h \cdot e_{12} = -it e_{21}, \quad h \cdot e_{21} = i t^{-1} e_{12}, \quad h \cdot e_{22} = e_{11},  \quad h \cdot t = t.
\end{align*} 
With this action, we therefore have a chain of isomorphisms
\begin{align*}
Q\left( \frac{\Bbbk_{-1}[u,v]}{\langle u^2-v^2 \rangle} \right) \hspace{-2pt} \hash S_2 
\cong Q\left( \frac{\Bbbk_{-1}[u^{\pm 1},v^{\pm 1}]}{\langle u^2-v^2 \rangle} \right) \hspace{-2pt} \hash S_2
\cong Q(M_2(\Bbbk[t^{\pm 1}]) \hash S_2
\cong M_2(\Bbbk(t)) \hash S_2.
\end{align*}
By direct calculation, one can verify that the centre of $M_2(\Bbbk(t)) \hash S_2$ is
\begin{align*}
Z \coloneqq \left \{ \begin{pmatrix} a & 0 \\ 0 & a \end{pmatrix} + \begin{pmatrix} 0 & b \\ -itb & 0 \end{pmatrix} \hspace{-1pt} h \hspace{2pt} \Bigg | \hspace{2pt} a, b \in \Bbbk(t) \right \} .
\end{align*}
This is a field because the map 
\begin{align*}
\psi : \Bbbk(t)[x] \to Z, \quad t \mapsto \begin{pmatrix} t & 0 \\ 0 & t \end{pmatrix}, \quad x \mapsto \begin{pmatrix} 0 & 1 \\ -it & 0 \end{pmatrix} \hspace{-1pt} h
\end{align*}
gives rise to an isomorphism $Z \cong \Bbbk(t)[x]/\langle x^2 + it \rangle$, where the right hand side is a field. Additionally, $M_2(\Bbbk(t))$ is simple and therefore, by \cite[Theorem 1.2 (c)]{oinert}, it follows that  $M_2(\Bbbk(t)) \hash S_2$ is simple. Hence $\langle u^2-v^2 \rangle$ is a prime ideal of $T$, as claimed.
\end{proof}

\begin{thm} \label{maxordercasec}
The algebra $T = \Bbbk_{-1}[u,v] \hash S_2$ in case \emph{(ii)} is a maximal order.
\end{thm}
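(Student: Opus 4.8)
The plan is to follow the template established in the proof of Theorem~\ref{bmaxorder}. By Lemma~\ref{AGprime}, $T$ is a prime noetherian finitely generated PI $\Bbbk$-algebra which is Cohen--Macaulay of GK dimension $2$, so by Lemma~\ref{maxorderlem} together with Lemma~\ref{height2} and the discussion following it, $T$ is a maximal order as soon as I verify $O_\ell(P) = T = O_r(P)$ for every height $1$ prime $P$ of $T$. All the work therefore concerns the height $1$ primes, which I would split into three classes according to whether $P$ contains $uv$, contains $u^2 - v^2$, or contains neither.

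For the first two classes I would use Lemma~\ref{primesforc}. One checks directly that $uv$ and $u^2 - v^2$ are normal nonzerodivisors of $T$: both anti-commute with $h$, while $uv$ conjugates $u,v$ by a sign and $u^2,v^2$ are central in $A = \Bbbk_{-1}[u,v]$. Since $\langle uv \rangle$ and $\langle u^2 - v^2 \rangle$ are nonzero primes by Lemma~\ref{primesforc}, any height $1$ prime containing one of them must equal it; and for an ideal generated by a normal nonzerodivisor $x$ one has $O_\ell(xT) = T = O_r(xT)$, exactly as for $\langle u \rangle$ in Theorem~\ref{bmaxorder}.

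The substance lies in the remaining class. Writing $A'' = \Bbbk_{-1}[u^{\pm1},v^{\pm1}][(u^2-v^2)^{-1}]$ and $T'' = A'' \hash S_2 = T[(uv)^{-1},(u^2-v^2)^{-1}]$, the ring $T''$ is Azumaya by Proposition~\ref{iiiazumaya}. A height $1$ prime $P$ avoiding $uv$ and $u^2-v^2$ survives as a height $1$ prime $PT''$ of $T''$, and by the Azumaya property $PT'' = \mathfrak{p}T''$ for a height $1$ prime $\mathfrak{p}$ of $Z(T'')$. I would then compute the centre: since the $S_2$-action on $A''$ is $X$-outer, Lemma~\ref{outercentre} gives $Z(T'') = Z(A'')^{S_2}$, and setting $s = u^2 + v^2$ and $p = u^2v^2$, this invariant ring is the localization $\Bbbk[s,p][\,p^{-1},(s^2-4p)^{-1}]$ of the polynomial ring $\Bbbk[s,p]$ (note $(u^2-v^2)^2 = s^2-4p$ and $(uv)^2 = p$). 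A localization of a UFD is a UFD, so $\mathfrak{p} = zZ(T'')$ is principal with $z$ a central nonzerodivisor, whence $\End_{T''}(PT'') = \End_{T''}(zT'') = T''$ and we obtain the chain $T \subseteq \End_T(P) \subseteq \End_{T''}(PT'') = T''$.

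Finally I would descend from $T''$ to $T$ by clearing denominators. Given $t' \in \End_T(P) \subseteq T''$, I pick $i,j \geq 0$ minimal with $t \coloneqq (uv)^i (u^2-v^2)^j t' \in T$ and show $i = j = 0$. If $i \geq 1$ then $tP \subseteq (uv)^i(u^2-v^2)^j P \subseteq \langle uv \rangle$, so primality of $\langle uv \rangle$ together with $P \not\subseteq \langle uv \rangle$ forces $t \in \langle uv \rangle = uvT$, contradicting minimality of $i$; the case $j \geq 1$ is identical using $\langle u^2 - v^2 \rangle$. Hence $t' = t \in T$, so $\End_T(P) = O_\ell(P) = T$, and $O_r(P) = T$ follows by the symmetric argument. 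The main obstacle is the middle step: correctly identifying $Z(T'')$ through the $X$-outer centre computation and the $S_2$-invariant theory, and recognising it as a UFD, since everything else reduces to the primality input of Lemma~\ref{primesforc} and routine localization bookkeeping.
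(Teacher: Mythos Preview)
Your proposal is correct and follows essentially the same approach as the paper's own proof: reduce to height $1$ primes via Lemmas~\ref{maxorderlem} and~\ref{height2}, handle $\langle uv\rangle$ and $\langle u^2-v^2\rangle$ directly using Lemma~\ref{primesforc} and normality, and for the remaining primes pass to the Azumaya localisation $T''$, identify $Z(T'')$ as a localised polynomial ring (hence a UFD), and descend back to $T$ by clearing denominators against the two prime ideals. The only cosmetic difference is that you clear denominators with two separate exponents $(uv)^i(u^2-v^2)^j$ whereas the paper uses the single product $(uv(u^2-v^2))^i$, but the logic is identical.
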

\begin{proof}
As in the proof of Theorem \ref{bmaxorder}, it suffices show that $O_\ell(P) = T = O_r(P)$ for all height $1$ primes. So suppose that $P$ has height $1$. We first remark that $\langle u \rangle = \langle v \rangle$ is not a prime ideal of $T$, since the quotient of $T$ by this ideal is isomorphic to $\Bbbk S_2$, which is not prime. If $P$ contains $u^2-v^2$, then by Lemma \ref{primesforc} $P = \langle u^2-v^2 \rangle$, and since $u^2-v^2$ is a normal nonzerodivisor, we have $O_\ell(P) = T$ in this case. Similarly, using Lemma \ref{primesforc} again, if $P$ contains $uv$ then $P = \langle uv \rangle$ and $O_\ell(P) = T$. Now let $P$ be a height $1$ prime of $T$ not containing $u^2-v^2$ or $uv$, so that $P$ corresponds to a height 1 prime $PT''$ of $T'' \coloneqq \Bbbk_{-1}[u^{\pm 1}, v^{\pm 1}][(u^2-v^2)^{-1}] \hash S_2$. As established in Proposition \ref{iiiazumaya}, $T''$ is Azumaya, so there exists a height 1 prime $\mathfrak{p}$ of $Z(T'')$ such that $PT'' = \mathfrak{p} T''$. But using the fact that $Z(RX^{-1}) = Z(R)X^{-1}$ if $X \subseteq Z(R)$ and that $(RX^{-1})^G = R^G X^{-1}$ if $X \subseteq R^G$,
\begin{align*}
Z(T'') &= Z \Big(\Bbbk_{-1}[u^{\pm 1}, v^{\pm 1}][(u^2-v^2)^{-2}] \hash S_2 \Big) \\
&= Z \big(\Bbbk_{-1}[u^{\pm 1}, v^{\pm 1}][(u^2-v^2)^{-2}] \big)^{S_2} \\
&= \big( \Bbbk[u^{\pm 2}, v^{\pm 2}][(u^2-v^2)^{-2}] \big)^{S_2} \\
&= \big( \Bbbk[u^2, v^2][(u^2v^2)^{-1}] \big)^{S_2}[(u^2-v^2)^{-2}] \\
&= \Bbbk[(u^2v^2)^{\pm 1}, u^2 + v^2][(u^2-v^2)^{-2}] \\
&\cong \Bbbk[x^{\pm 1}, y][(y^2-4x)^{-1}].
\end{align*}
The last ring is the localisation of a UFD which implies that $Z(T'')$ is a UFD, and so height 1 primes are principal, which means that $\mathfrak{p} = z Z(T'')$ for some $z \in Z(T'')$. Since $z$ is a central nonzerodivisor, we have $\End_{T''}(zT'') = T''$, and therefore we have a chain of inclusions
\begin{align*}
T \subseteq \End_T(P) \subseteq \End_{T''}(PT'') = \End_{T''}(zT'') = T''.
\end{align*}
\indent It remains to show that this forces $\End_T(P) = T$. To this end, let $t'' \in \End_T(P) \subseteq T''$ and choose $i \geqslant 0$ minimal such that $t \coloneqq (uv(u^2-v^2))^i t'' \in T$. We claim that $i=0$, forcing $t'' \in T$. Seeking a contradiction, suppose that $i \geqslant 1$; then 
\begin{align*}
tP \subseteq (uv(u^2-v^2))^i t'' P \subseteq (uv(u^2-v^2))^i P \subseteq \langle uv \rangle.
\end{align*}
Since $P \nsubseteq \langle uv \rangle$ and, by Lemma \ref{primesforc}, $\langle uv \rangle$ is a prime ideal of $T$, we find that $t \in \langle uv \rangle$; similarly, $t \in \langle u^2-v^2 \rangle$. Hence $t \in \langle uv \rangle \cap \langle u^2-v^2 \rangle = \langle uv(u^2-v^2) \rangle$, contradicting minimality of $i$. Therefore $t'' \in T$, and so $\End_T(P) = T$. \\
\indent It follows that every nonzero prime ideal of $P$ of $T$ satisfies $O_\ell(P) = T$, and similarly also satisfies $O_r(P) = T$, and so $T$ is a maximal order.
\end{proof}

\subsection{Proof of Theorem \ref{maxorderthm} for case (iii)}
We finally come to what turns out to be the most involved case. Again, we use the same approach as in the proof of Theorem \ref{bmaxorder}, but we must first make some preliminary calculations. Most notably, computing the centres of the algebras of interest is quite involved, and so we state these as independent lemmas. \\
\indent Throughout, we write $A = \Bbbk_{-1}[u,v]$, $A' = A[u^{-1},v^{-1}]$, $A'' = A'[(u^{2n}-v^{2n})^{-1}]$, and $T, T', T''$ for the corresponding skew group rings coming from the action of $G = D_n$.

\begin{prop} \label{propcentreodd}
Suppose that $n$ is odd. Then
\begin{align*}
Z(T'') &=\Bbbk[u^2v^2, u^{2n} + v^{2n}][(u^2v^2)^{-1}, ((u^{2n}-v^{2n})^2)^{-1} ] \cong \Bbbk[x^{\pm 1},y][(y^2-4 x^n)^{-1}], 
\end{align*}
which is a UFD.
\end{prop}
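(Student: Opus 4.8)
The plan is to use the $X$-outer hypothesis to reduce $Z(T'')$ to a commutative ring of invariants, compute that invariant ring by hand, and then recognise the result as a localisation of a polynomial ring. Since $n$ is odd, the order of $q = -1$ is coprime to $n$, so the $D_n$-action on $A''$ is $X$-outer; this is precisely the computation already carried out in the proof of Proposition~\ref{disazumayaodd}. Lemma~\ref{outercentre} then gives $Z(T'') = Z(A'')^{D_n}$, so everything reduces to a commutative invariant-theory calculation.

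Next I would identify $Z(A'')$. As $A' = \Bbbk_{-1}[u^{\pm 1},v^{\pm 1}]$ is the quantum torus at $q=-1$, its centre is $\Bbbk[u^{\pm 2},v^{\pm 2}]$; since $u^{2n} = (u^2)^n$ and $v^{2n}=(v^2)^n$ are central, $u^{2n}-v^{2n}$ is a central regular element, and $Z(A'') = \Bbbk[u^{\pm 2},v^{\pm 2}][(u^{2n}-v^{2n})^{-1}]$ by $Z(RX^{-1})=Z(R)X^{-1}$. Under $D_n$ the element $u^{2n}-v^{2n}$ is fixed by $g$ and negated by $h$, so it is not invariant, but its square is. Because a central element and its square generate the same localisation, I can rewrite $Z(A'') = \Bbbk[u^{\pm 2},v^{\pm 2}][((u^{2n}-v^{2n})^2)^{-1}]$ and then pull the now-$D_n$-invariant localisation outside the invariants using $(RX^{-1})^G = R^G X^{-1}$, obtaining $Z(T'') = (\Bbbk[u^{\pm 2},v^{\pm 2}])^{D_n}[((u^{2n}-v^{2n})^2)^{-1}]$.

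The heart of the argument is the computation of $(\Bbbk[u^{\pm 2},v^{\pm 2}])^{D_n}$, which I would carry out in two stages via $D_n = \langle g \rangle \rtimes \langle h \rangle$. Writing $s = u^2$ and $t = v^2$, the rotation acts by $s \mapsto \omega^2 s$, $t \mapsto \omega^{-2}t$; as $n$ is odd, $\omega^2$ is again a primitive $n$th root of unity, so the $\langle g \rangle$-invariant monomials $s^a t^b$ are precisely those with $a \equiv b \pmod n$, giving $\Bbbk[s^{\pm 1},t^{\pm 1}]^{\langle g \rangle} = \Bbbk[(st)^{\pm 1},(s^n)^{\pm 1}]$. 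Setting $X = st = u^2v^2$ and $Y = s^n = u^{2n}$, the reflection $h$ fixes $X$ and sends $Y \mapsto t^n = X^nY^{-1}$. Expanding a general element as $\sum_k f_k(X)Y^k$ and imposing $h$-invariance forces $f_{-k} = X^{nk}f_k$, so every invariant is a $\Bbbk[X^{\pm 1}]$-combination of the elements $P_k = Y^k + X^{nk}Y^{-k}$; the recurrence $P_k = W P_{k-1} - X^n P_{k-2}$, where $W := Y + X^nY^{-1} = u^{2n}+v^{2n}$, then shows each $P_k$ lies in $\Bbbk[X^{\pm 1}][W]$. Hence $(\Bbbk[u^{\pm 2},v^{\pm 2}])^{D_n} = \Bbbk[(u^2v^2)^{\pm 1},u^{2n}+v^{2n}]$.

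Assembling the pieces gives the claimed equality $Z(T'') = \Bbbk[u^2v^2,u^{2n}+v^{2n}][(u^2v^2)^{-1},((u^{2n}-v^{2n})^2)^{-1}]$. Since $(u^{2n}-v^{2n})^2 = (u^{2n}+v^{2n})^2 - 4(u^2v^2)^n$, and since $u^2v^2$ and $u^{2n}+v^{2n}$ are algebraically independent (being, up to an $n$th root of the product, the elementary symmetric functions of the algebraically independent elements $u^{2n}$, $v^{2n}$), the substitution $x = u^2v^2$, $y = u^{2n}+v^{2n}$ identifies this ring with $\Bbbk[x^{\pm 1},y][(y^2-4x^n)^{-1}]$. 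This is a localisation of the polynomial ring $\Bbbk[x,y]$ at the multiplicative set generated by $x$ and $y^2-4x^n$, and localisations of UFDs are UFDs, so $Z(T'')$ is a UFD. The one genuinely delicate step is the second-stage invariant computation, namely showing that no invariants beyond $\Bbbk[X^{\pm 1}][W]$ arise; the remaining steps are routine manipulations of central localisations together with the standard description of the centre of a quantum torus.
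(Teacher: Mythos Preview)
Your argument is correct and follows the same overall strategy as the paper: invoke the $X$-outer hypothesis together with Lemma~\ref{outercentre} to reduce to a commutative invariant computation, identify the invariant ring, and recognise the answer as a localisation of a polynomial ring (hence a UFD). The execution differs slightly: the paper first computes $Z(T) = Z(A)^{D_n} = \Bbbk[u^2,v^2]^{D_n}$ for the \emph{unlocalised} algebra, citing \cite[Appendix~A]{benson} to obtain the polynomial ring $\Bbbk[ab,a^n+b^n]$ (so algebraic independence of the generators is immediate), and only then localises at the central set generated by $u^2v^2$ and $(u^{2n}-v^{2n})^2$. You instead localise first and compute the Laurent-polynomial invariants $(\Bbbk[u^{\pm 2},v^{\pm 2}])^{D_n}$ directly, by hand, via the decomposition $D_n = \langle g\rangle \rtimes \langle h\rangle$ and the Chebyshev-style recurrence for the $P_k$. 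Your route is more self-contained but requires the extra algebraic-independence check at the end; the paper's route is shorter because the classical polynomial invariant ring of $D_n$ is already a polynomial ring.
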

\begin{proof}
By the proof of Proposition \ref{disazumayaodd}, $T$ satisfies the hypotheses of Lemma \ref{outercentre}. Writing $a = u^2, \gap b = v^2$, we therefore have
\begin{align*}
Z(T) = Z(A)^{D_n} = \Bbbk[a,b]^{D_n}.
\end{align*}
where the $D_n$-action on $\Bbbk[a,b]$ is given by 
\begin{align*}
g \cdot a = \omega^{2i} a, \quad g \cdot b = \omega^{-2i} b, \quad h \cdot a = b, \quad h \cdot b = a.
\end{align*}
By \cite[Appendix A]{benson},
\begin{align*}
\Bbbk[a,b]^{D_n} = \Bbbk[ab, a^n + b^n]
\end{align*}
is a polynomial ring in two variables, and so
\begin{align*}
Z(T) = \Bbbk[u^2v^2, u^{2n} + v^{2n}] \cong \Bbbk[x,y],
\end{align*}
where $x \coloneqq u^2 v^2, y \coloneqq u^{2n}+v^{2n}$. \\
\indent Finally, to determine $Z(T'')$, note that $T'' = T[(u^2v^2)^{-1}, ((u^{2n}-v^{2n})^2)^{-1}]$, where the multiplicative set generated by $u^2v^2$ and $(u^{2n}-v^{2n})^2$ is contained in $Z(T)$. With the notation for $x$ and $y$ as above, we have $(u^{2n}-v^{2n})^2 = y^2 - 4x^n$, and so
\begin{align*}
Z(T'') = Z(T)[(u^2v^2)^{-1}, ((u^{2n}-v^{2n})^2)^{-1}] \cong \Bbbk[x^{\pm 1},y][(y^2-4 x^n)^{-1}].
\end{align*}
This, being the localisation of a UFD, is itself a UFD.
\end{proof}

Determining the centre of $T''$ is more involved when $n$ is even, essentially because the action is not $X$-outer.

\begin{prop} \label{propcentreeven}
Suppose that $n$ is even and write $m = n/2$. Then
\begin{align*}
Z(T'') \cong \frac{\Bbbk[x,y,z]}{\langle x^2y + y^{m+1} + z^2 \rangle}[y^{-1}, (x^4 + x^2 y^m)^{-1}].
\end{align*}
Moreover, $Z(T'')$ is a UFD.
\end{prop}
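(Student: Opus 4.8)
The plan is to reduce the computation to an invariant-theory problem via the crossed-product description of $T'$ and then localise. Recall from the proof of Proposition \ref{disazumayaeven} that $T' \cong R * D_m$, where $R = A' \hash \langle g^m \rangle$ is a prime Azumaya algebra on which $D_m$ acts $X$-outer, with $Z(R) = \Bbbk[p^{\pm 1}, q^{\pm 1}, w^{\pm 1}]/\langle pq + w^2 \rangle$ for $p = u^2$, $q = v^2$, $w = uvg^m$. Since $T'' = A'' \hash D_n \cong R'' * D_m$ with $R'' = R[((u^{2n}-v^{2n})^2)^{-1}]$, Lemma \ref{outercentre} gives $Z(T'') = Z(R'')^{D_m}$. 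As $(u^{2n}-v^{2n})^2 = (p^n - q^n)^2 \in Z(R)$ is $D_m$-invariant (the rotations fix $p^n, q^n$ and the reflections swap them up to sign), this central localisation passes through the invariants, so $Z(T'') = Z(R)^{D_m}[((u^{2n}-v^{2n})^2)^{-1}]$. This is the key structural reduction; the remaining work is to compute $Z(R)^{D_m}$ and to identify the localised element.

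First I would compute the invariant ring. Eliminating $q = -w^2 p^{-1}$ identifies $Z(R)$ with the Laurent ring $\Bbbk[p^{\pm 1}, w^{\pm 1}]$, and the $D_m$-action recorded in Proposition \ref{disazumayaeven} becomes $\sigma : p \mapsto \omega^2 p,\ w \mapsto w$ and $\tau : p \mapsto -w^2 p^{-1},\ w \mapsto -w$. Passing first to the rotation invariants gives $Z(R)^{C_m} = \Bbbk[p^{\pm m}, w^{\pm 1}]$, and I would then take the $\langle \tau \rangle$-invariants. The natural invariants are $V = w^2\ (= -pq)$, $S = p^m + q^m$ and $W = w(p^m - q^m)$; a short check shows they generate (every trace $p^{mk} + \tau(p^m)^k$ and $w(p^{mk} - \tau(p^m)^k)$ is a polynomial in $V^{\pm 1}, S, W$) and satisfy $W^2 = V S^2 - 4(-1)^m V^{m+1}$, which is the full relation since the corresponding hypersurface is irreducible and both rings are two-dimensional domains, forcing the surjection to be an isomorphism. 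Here $V = w^2$ is already a unit while $S$ and $W$ are not, and rescaling $V, S, W$ (possible as $\Bbbk$ is algebraically closed of characteristic zero) turns the relation into $x^2 y + y^{m+1} + z^2 = 0$, giving $Z(R)^{D_m} \cong (\Bbbk[x,y,z]/\langle x^2 y + y^{m+1} + z^2 \rangle)[y^{-1}]$, with $x \leftrightarrow S$, $y \leftrightarrow V$, $z \leftrightarrow W$.

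To finish the isomorphism I would identify the localised element. Using $p^n - q^n = (p^m - q^m)(p^m + q^m)$ together with $(p^m - q^m)^2 = S^2 - 4(-1)^m V^m = W^2 V^{-1}$, one obtains $(u^{2n}-v^{2n})^2 = W^2 S^2 V^{-1}$, which under the rescaling agrees up to a unit with $x^4 + x^2 y^m = x^2(x^2 + y^m)$. Inverting it therefore produces exactly the displayed localisation, completing the first assertion.

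The final, and hardest, step is factoriality, which is where I expect the real difficulty to lie. I would first observe that $Z(T'')$ is regular: the affine hypersurface $\{x^2 y + y^{m+1} + z^2 = 0\}$ is singular only at the origin (the Jacobian criterion gives $f_x = 2xy$, $f_y = x^2 + (m+1)y^m$, $f_z = 2z$, whose common zero on the variety is forced to be the origin in characteristic zero), and inverting $y$ removes that point; hence $Z(T'')$ is a regular noetherian domain and is a UFD precisely when its divisor class group vanishes. Since $Z(R'')$ is a localisation of the UFD $\Bbbk[p^{\pm 1}, w^{\pm 1}]$, and $D_m$ acts freely on $\Spec Z(R)$ — no nonidentity element fixes a torus point, as every reflection negates $w$ and every nontrivial rotation scales $p$ by a root of unity $\neq 1$ — the extension $Z(R'')/Z(T'')$ is unramified in codimension one, so standard results on class groups of invariant rings (Fossum/Samuel) identify $\operatorname{Cl}(Z(T'')) \cong H^1(D_m, Z(R'')^{\times})$. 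The main obstacle is to show this group vanishes: the class group of the un-localised invariant ring $Z(R)^{D_m}$ (a $D$-type Kleinian singularity) need not be trivial, and the crux is that inverting $(u^{2n}-v^{2n})^2$ produces new units — the factors of $u^{2n}-v^{2n}$, which $D_m$ permutes — that turn every representing cocycle into a coboundary. Equivalently, via Nagata's localisation sequence applied to $Z(R)^{D_m}$, I would show that the height-one primes lying over $x = 0$ and $x^2 + y^m = 0$ generate $\operatorname{Cl}$, so that inverting $x^4 + x^2 y^m$ kills the class group and $Z(T'')$ is a UFD.
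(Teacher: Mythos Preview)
Your computation of the isomorphism is correct and structurally the same as the paper's: both reduce via the crossed-product decomposition $T \cong (A \hash \langle g^m\rangle) * D_m$ and Lemma~\ref{outercentre} to a $D_m$-invariant computation, then identify the localised element. The execution differs: the paper works with the unlocalised $Z(\Bbbk_{-1}[u,v]\hash\langle g^m\rangle)\cong \Bbbk[a,b,c]/\langle ab-c^2\rangle$, computes $\Bbbk[a,b,c]^{D_m}$ via Molien's formula (finding four generators of degrees $2,2,m,m+1$ and one relation), and only then imposes $ab=c^2$; you instead eliminate $q=-w^2p^{-1}$ to reduce to invariants of a two-variable Laurent ring and produce $V,S,W$ directly. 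Both approaches land on the same hypersurface $x^2y+y^{m+1}+z^2=0$ after rescaling, and your identification $(u^{2n}-v^{2n})^2 = W^2S^2V^{-1} \sim x^4+x^2y^m$ matches the paper's. Your route is arguably more elementary; the paper's has the advantage of producing $Z(T)$ (before any localisation) as the exact coordinate ring of a $\mathbb{D}_{m+2}$ singularity, which it then exploits for the UFD step.

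The UFD argument is where your plan has a genuine gap. You correctly observe that $Z(T'')$ is regular and that Nagata's theorem reduces factoriality to showing the relevant height-one primes generate the class group of the singularity, but you do not actually establish this --- you only say you ``would show'' it, and your alternative $H^1(D_m, Z(R'')^\times)$ route is likewise left as an unexecuted programme. The paper closes this gap concretely: since $Z(T)$ is the coordinate ring $R$ of a $\mathbb{D}_{m+2}$ Kleinian singularity, the rank-one reflexive $R$-modules (equivalently, the elements of $\operatorname{Cl}(R)$) are known explicitly from the MCM classification \cite[9.21]{leu} --- there are exactly four, each an ideal generated by two of $x,y,z,xy, z\pm iy^{(m+1)/2}, xy\pm iy^{m/2+1}$. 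The paper then checks directly that $x$, $y$, and $z$ are all units in $Z(T'')$ (using the relation to see $z^2=-y(x^2+y^m)$ is a unit), so every one of these ideals extends to the unit ideal and Nagata's surjection $\operatorname{Cl}(R)\to\operatorname{Cl}(Z(T''))$ forces the target to be trivial. This explicit list is precisely the missing ingredient in your argument; without it (or an equivalent computation of $H^1$), your plan for the UFD claim is incomplete.
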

\begin{proof}
We first determine the centre of $T$. As in the proof of Proposition \ref{disazumayaeven}, we have an isomorphism
\begin{align*}
T \cong (\Bbbk_{-1}[u,v] \hash \langle g^{m} \rangle) * D_{m}.
\end{align*}
Now,
\begin{align*}
Z(\Bbbk_{-1}[u,v] \hash \langle g^m \rangle) \cong \Bbbk[a,b,c]/\langle ab-c^2 \rangle
\end{align*}
where $a \coloneqq u^2, b \coloneqq v^2, c \coloneqq i uvg^m$. As in the proof of Proposition \ref{disazumayaeven}, the set of automorphisms $\{ \alpha_{\sigma^i \tau^j} \mid 0 \leqslant i < m, j=0,1 \}$ acts as a group of $X$-outer automorphisms on $\Bbbk[a,b,c]/\langle ab-c^2 \rangle$ via
\begin{gather*}
\alpha_{\sigma^i} (a) = \varepsilon^i a, \quad \alpha_{\sigma^i} ( b) = \varepsilon^{-i} b, \quad \alpha_{\sigma^i} (c) = c, \\
\alpha_{\sigma^i \tau} (a) = \varepsilon^{-i} b, \quad \alpha_{\sigma^i \tau} (b) = \varepsilon^i a, \quad \alpha_{\sigma^i \tau} (c) = -c,
\end{gather*}
where $\varepsilon = \omega^2$. Therefore, using Lemma \ref{outercentre}, we have
\begin{align*}
Z(\Bbbk_{-1}[u,v] \hash D_n) = Z((\Bbbk_{-1}[u,v] \hash \langle g^m \rangle) * D_m) = Z(\Bbbk_{-1}[u,v] \hash \langle g^m \rangle)^{D_m} \cong \left(\frac{\Bbbk[a,b,c]}{\langle ab-c^2 \rangle} \right)^{D_m}.
\end{align*}
We first determine $R \coloneqq \Bbbk[a,b,c]^{D_m}$, where the action is as above. We use Molien's formula \cite[Theorem 2.5.2]{benson} to work out the Hilbert series of $R$:
\begin{align*}
\hilb R = \frac{1}{| D_m | } \sum_{\alpha \in G} \frac{1}{\det (I - \alpha t)}.
\end{align*}
In matrix form, the elements of $D_m$ and the relevant determinants are as follows:
\vspace{3pt}
\begin{center}
\begin{tabular}{ c | c | c | c} 
Element & Matrix & Number & $\det(I - \alpha t)$ \\
\hline
\vspace{-2pt} & \vspace{-2pt} & \vspace{-2pt} & \vspace{-2pt}\\
$\sigma^i, \quad 0 \leqslant i \leqslant m-1$ & $\begin{pmatrix} \varepsilon^i & 0 & 0 \\ 0 & \varepsilon^{-i} & 0 \\ 0 & 0 & 1 \end{pmatrix}$ & $m$ & $(1-t)(1-\varepsilon^i t)(1- \varepsilon^{-i}t)$ \\ 
\vspace{-2pt} & \vspace{-2pt} & \vspace{-2pt} & \vspace{-2pt}\\
$\sigma^i \tau, \quad 0 \leqslant i \leqslant m-1$ & $\begin{pmatrix} 0 & \varepsilon^i & 0 \\ \varepsilon^{-i} & 0 & 0 \\ 0 & 0 & -1 \end{pmatrix}$ & $m$ & $(1-t)(1+t)^2$ \\ 
\end{tabular}
\end{center}
\vspace{3pt}
We therefore have
\begin{align*}
\hilb R &= \frac{1}{2m} \left( \frac{m}{(1-t)(1+t)^2} + \sum_{i=0}^{m-1} \frac{1}{(1-t)(1-\varepsilon^i t)(1- \varepsilon^{-i}t)} \right) \\
&= \frac{1}{2(1-t)(1+t)^2} + \frac{1}{2(1-t)} \cdot \underbrace{ \frac{1}{m}\sum_{i=0}^{m-1} \frac{1}{(1-\varepsilon^i t)(1- \varepsilon^{-i}t)}}_{(*)}.
\end{align*}
Now $(*)$ is, by Molien's formula, the Hilbert series of the coordinate ring of an $\mathbb{A}_{m-1}$ singularity, and so is known to equal $(1-t^{2m})/(1-t^2)(1-t^m)^2$. A routine calculation then shows that
\begin{align*}
\hilb R &=  \frac{1-t^{2(m+1)}}{(1-t^2)^2(1-t^m)(1-t^{m+1})}.
\end{align*}
This implies that $R$ has four generators, of degrees $2,2,m,m+1$, and that there is a single relation of degree $2(m+1)$ between these generators. It is easy to check that
\begin{align*}
x \coloneqq \frac{i}{2}(a^m+b^m), \quad y \coloneqq ab, \quad y' \coloneqq c^2, \quad z \coloneqq \frac{1}{2}(a^m c - b^m c),
\end{align*}
are $D_m$-invariants and that
\begin{align*}
x^2y' + y^m y' + z^2 = - \frac{1}{4}(a^{2m} + 2 a^m b^m + b^{2m})c^2 + a^m b^m c^2 + \frac{1}{4} (a^{2m}c^2 - 2 a^m b^m c^2 + b^{2m} c^2) = 0
\end{align*}
so that
\begin{align*}
R = \Bbbk[a,b,c]^{D_m} \cong \frac{\Bbbk[x,y,y',z]}{\langle x^2y' + y^m y' + z^2 \rangle}.
\end{align*}
Since $y = y'$ in $\Bbbk[a,b,c]/\langle ab - c^2 \rangle$, it immediately follows that
\begin{align*}
\left( \frac{\Bbbk[a,b,c]}{\langle ab-c^2 \rangle} \right)^{D_m} \cong \frac{\Bbbk[x,y,z]}{\langle x^2y + y^{m+1} + z^2 \rangle},
\end{align*}
where $x,y,$ and $z$ are as above. Finally, recalling that $Z(\Bbbk_{-1}[u,v] \hash D_n) \cong (\Bbbk[a,b,c]/\langle ab-c^2 \rangle)^{D_m}$ where $a \coloneqq u^2, b \coloneqq v^2, c \coloneqq i uvg^m$, we find that 
\begin{align*}
Z(\Bbbk_{-1}[u,v] \hash D_n) \cong \frac{\Bbbk[x,y,z]}{\langle x^2y + y^{m+1} + z^2 \rangle},
\end{align*}
where we have set
\begin{align*}
x \coloneqq \frac{i}{2}(u^{n}+v^{n}), \quad y \coloneqq u^2v^2, \quad z \coloneqq \frac{i}{2}(u^{n+1} v g^m - u v^{n+1} g^m).
\end{align*}
\indent To determine the centre of $T''$, observe that we have
\begin{align*}
T'' = T[(u^2v^2)^{-1}, ((u^{2n}-v^{2n})^2)^{-1}],
\end{align*}
where the multiplicative set generated by $u^2v^2$ and $(u^{2n}-v^{2n})^2$ is contained in $Z(T)$. Observe also that, with the notation for $x,y,z$ as above, we have
\begin{align*}
\tfrac{1}{16} (u^{2n}-v^{2n})^2 = x^4 + x^2y^m,
\end{align*}
and so
\begin{align*}
Z(T'') = Z(T)[y^{-1}, (x^4 + x^2y^m)^{-1}] \cong \frac{\Bbbk[x,y,z]}{\langle x^2y + y^{m+1} + z^2 \rangle}[y^{-1}, (x^4 + x^2 y^m)^{-1}].
\end{align*}
\indent For the final claim, observe that $S \coloneqq Z(T'')$ is a localisation of the coordinate ring $R$ of a $\mathbb{D}_{m+2}$ singularity. Write $\text{Cl}\gap(R)$ for the divisor class group of $R$. By Nagata's theorem, the canonical map
\begin{align*}
\phi : \text{Cl}\gap(R) \to \text{Cl}\gap(S), \quad \sum_i \alpha_i R \gap \gap \mapsto \gap \gap \sum_i \alpha_i S
\end{align*}
is a surjection. Therefore, if $\phi$ maps every element of $\text{Cl}\gap(R)$ to $[S]$ in $\text{Cl}\gap(S)$ then $\text{Cl}\gap(S)$ is trivial. Since $R$ is a noetherian integrally closed domain \cite[Proposition 1.1.1]{benson}, we can define $\text{Cl}\gap(R)$ to be the set of isomorphism class of rank one reflexive modules, with multiplication given by $[I][J] = [(I \otimes_R J)^{**}]$. Since $\idim R = 2$, these are precisely the rank one maximal Cohen-Macaulay modules. These modules are known: by \cite[9.21]{leu}, up to isomorphism they are
\begin{align*}
\left \{
\begin{array}{l l l}
R, \hspace{4pt} \langle y,z \rangle, \hspace{4pt} \langle z, xy-iy^{m/2 + 1} \rangle, \hspace{4pt} \langle z, xy+iy^{m/2 + 1} \rangle  & & \text{if } m \text{ is even} \\
R, \hspace{4pt} \langle y,z \rangle, \hspace{4pt} \langle x, z+iy^{(m+1)/2} \rangle, \hspace{4pt} \langle xy, z+iy^{(m+1)/2} \rangle  & & \text{if } m \text{ is odd}
\end{array}
\right. .
\end{align*}
But $x,y,$ and $z$ are all invertible in $S$ since
\begin{gather*}
x \cdot (x^3 + xy^m)(x^4 + x^2y^m)^{-1} = 1, \\
y \cdot y^{-1} = 1, \\
z \cdot -x^2 z y^{-1} (x^4 + x^2 y^m)^{-1} = (x^4y + x^2y^{m+1})y^{-1} (x^4 + x^2 y^m)^{-1} = 1.
\end{gather*}
Therefore each of the above ideals gets sent to $[S]$ under $\phi$, and so $\text{Cl}\gap(S)$ is trivial, which implies that $S$ is a UFD.
\end{proof}


The remainder of the proof of Theorem \ref{maxorderthm} for case (iii) does not depend on the parity of $n$. We now show that certain ideals of $T$ are prime:

\begin{lem} \label{primesford}
Let $T = \Bbbk_{-1}[u,v] \hash D_n$, as in case \emph{(iii)}. Then $\langle uv \rangle$ and $\langle u^{2n} - v^{2n} \rangle$ are prime ideals of $T$.
\end{lem}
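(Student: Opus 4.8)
The plan is to treat both ideals with the same machinery used for case (ii) in Lemma \ref{primesforc}. Each generator is a normal element that is semi-invariant under $D_n$: one checks $g\cdot(uv)=uv$ and $h\cdot(uv)=-uv$, and $g\cdot(u^{2n}-v^{2n})=u^{2n}-v^{2n}$ and $h\cdot(u^{2n}-v^{2n})=-(u^{2n}-v^{2n})$, so both ideals are $D_n$-stable. Hence for $I\in\{\langle uv\rangle,\langle u^{2n}-v^{2n}\rangle\}$ we have $T/IT\cong(A/I)\hash D_n$, and by \cite[Lemma 6.17]{gw} it suffices to prove that the Goldie quotient ring $Q(A/I)\hash D_n$ is simple. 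For this I would invoke \cite[Theorem 1.2]{oinert}: it is enough to show that $Q(A/I)$ is $D_n$-simple and that the centre of $Q(A/I)\hash D_n$ is a field.

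For $\langle uv\rangle$ the computation is essentially the first half of Lemma \ref{primesforc}. Modding out by $uv$ collapses $\Bbbk_{-1}[u,v]$ to a ring whose Goldie quotient ring is $\Bbbk(u)\times\Bbbk(v)$, on which the rotation $g$ acts within each factor via $u\mapsto\omega u$, $v\mapsto\omega^{-1}v$, while $h$ interchanges the two factors. The swap makes $\Bbbk(u)\times\Bbbk(v)$ into a $D_n$-simple ring by the same one-line argument as in Lemma \ref{primesforc} (any nonzero $D_n$-stable ideal contains an idempotent, hence by applying $h$ contains $1$). By Lemma \ref{outercentre} its centre is the invariant subring $\{(f(u^n),f(v^n))\mid f\in\Bbbk(w)\}\cong\Bbbk(w)$, which is a field. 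Thus $\langle uv\rangle$ is prime for every $n$.

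The ideal $\langle u^{2n}-v^{2n}\rangle$ is the genuine difficulty. Here I would first invert the normal regular elements $u,v$ (which does not change the Goldie quotient ring) to reduce to $B'=\Bbbk_{-1}[u^{\pm1},v^{\pm1}]/\langle u^{2n}-v^{2n}\rangle$. Since $\Bbbk_{-1}[u^{\pm1},v^{\pm1}]$ is Azumaya of rank $4$ over $\Bbbk[u^{\pm2},v^{\pm2}]$, the algebra $B'$ is Azumaya of rank $4$ over the reduced ring $\Bbbk[u^{\pm2},v^{\pm2}]/\langle u^{2n}-v^{2n}\rangle$, whose total quotient ring splits by the Chinese Remainder Theorem along the factorisation $u^{2n}-v^{2n}=\prod_{i=0}^{n-1}(u^2-\omega^i v^2)$ into a product of $n$ copies of $\Bbbk(t)$. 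Consequently $Q(B')\cong\prod_{i=0}^{n-1}M_2(\Bbbk(t))$, with $D_n$ permuting these $n$ matrix factors (together with an inner twist on each): $g$ and $h$ move the factor indexed by $u^2-\omega^i v^2$ according to how they rescale and interchange $u^2$ and $v^2$. The splitting isomorphism itself can be assembled factor-by-factor from the explicit map $\phi$ of Lemma \ref{primesforc}.

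The crux, and the step I expect to be the main obstacle, is to show that $D_n$ permutes these $n$ components transitively, for then $Q(B')$ is $D_n$-simple; combined with a direct verification that the centre of $Q(B')\hash D_n$ is a field, \cite[Theorem 1.2]{oinert} yields simplicity and hence primeness of $\langle u^{2n}-v^{2n}\rangle$. Tracking the induced permutation of the ideals $\langle u^2-\omega^i v^2\rangle$ under $g$ and $h$ is a delicate bookkeeping exercise with roots of unity, and I anticipate that this transitivity argument is cleanest when organised according to the parity of $n$, exactly as in Propositions \ref{disazumayaodd} and \ref{disazumayaeven}. In particular, for even $n$ I would first pass through the decomposition $T\cong(A'\hash\langle g^m\rangle)*D_m$ established in Proposition \ref{disazumayaeven}, so that the inner subgroup $\langle g^m\rangle$ is absorbed into the coefficient ring before the component structure and centre are analysed.
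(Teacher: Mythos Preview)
Your overall plan for $\langle uv\rangle$ matches the paper's, but the simplicity criterion you invoke does not apply here. You appeal to \cite[Theorem 1.2]{oinert} to conclude that $Q(A/I)\hash D_n$ is simple once $Q(A/I)$ is $D_n$-simple and the centre of the skew group ring is a field. That result, however, is proved for \emph{abelian} groups (as the title of \cite{oinert} indicates), and $D_n$ is non-abelian for $n\geqslant 3$. The paper is careful about this: for case (iii) it instead uses \cite[Theorem 6.13]{oinert2}, which holds for arbitrary groups but asks for a different hypothesis, namely that the centraliser of $\Bbbk(u)\times\Bbbk(v)$ inside $(\Bbbk(u)\times\Bbbk(v))\hash D_n$ is exactly $\Bbbk(u)\times\Bbbk(v)$ (maximal commutativity). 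This is verified by a short direct computation rather than by identifying the centre as a field. So the $\langle uv\rangle$ argument can be repaired, but not with the tool you cite.

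For $\langle u^{2n}-v^{2n}\rangle$ the same objection to \cite{oinert} applies, and here the paper's argument is also substantially shorter than the product-of-matrix-rings route you sketch. The paper notes that $\langle u^2-v^2\rangle$ is prime in $\Bbbk_{-1}[u,v]$ by \cite{irving1} and that $\langle u^{2n}-v^{2n}\rangle=\bigcap_{f\in D_n} f\cdot\langle u^2-v^2\rangle$ is therefore $G$-prime. Then \cite[Corollary 14.8]{passman} reduces primeness of $(A/\langle u^{2n}-v^{2n}\rangle)\hash D_n$ to primeness of $(A/\langle u^2-v^2\rangle)\hash H$, where $H=\{1,h\}$ is the stabiliser of the minimal prime $\langle u^2-v^2\rangle$. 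But that is precisely the second half of Lemma \ref{primesforc} (case (ii)), already established. This bypasses the explicit decomposition of $Q(B')$ into $n$ matrix factors, the transitivity bookkeeping with roots of unity, the parity split, and the centre computation altogether; the reduction to a stabiliser subgroup is doing exactly the work you were anticipating as ``the main obstacle''.
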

\begin{proof}
Since $\langle uv \rangle$ is $G$-stable, we need to show that $T/\langle uv \rangle \cong \Bbbk_{-1}[u,v]/\langle uv \rangle \hash D_n$ is prime. Equivalently, we show that $Q(\Bbbk_{-1}[u,v]/\langle uv \rangle) \hash D_n$ is simple, where as in the proof of Proposition \ref{primesforc}, we have
\begin{align*}
Q(\Bbbk_{-1}[u,v]/\langle uv \rangle) \hash D_n \cong (\Bbbk(u) \times \Bbbk(v)) \hash D_n,
\end{align*}
where $D_n$ acts via
\begin{align*}
g \cdot (u,0) = (\omega u, 0), \quad g \cdot (0,v) = (0, \omega^{-1} v), \quad h \cdot (u,0) = (0,v), \quad h \cdot (0,v) = (u,0).
\end{align*}
To show that this latter ring is simple, we show that $\Bbbk(u) \times \Bbbk(v)$ is $G$-simple and that the centraliser $C$ of $\Bbbk(u) \times \Bbbk(v)$ in $((\Bbbk(u) \times \Bbbk(v)) \hash D_n$ is $\Bbbk(u) \times \Bbbk(v)$, see \cite[Theorem 6.13]{oinert2}. $G$-simplicity of $\Bbbk(u) \times \Bbbk(v)$ follows from the same argument as in the proof of Lemma \ref{primesforc}, so now let $c = \sum_{0 \leqslant i < n, \gap 0 \leqslant j \leqslant 1} f_{ij} g^ih^j \in C$, where $f_{ij} = (f_{ij}^{(1)}(u),f_{ij}^{(2)}(v)) \in \Bbbk(u) \times \Bbbk(v)$. Then
\begin{align*}
\sum_{\genfrac{}{}{0pt}{}{0 \leqslant i < n}{0 \leqslant j \leqslant 1}} (uf_{ij}^{(1)},0) \gap g^ih^j = (u,0) \gap c = c \gap (u,0) = \sum_{0 \leqslant i < n} (\omega^i u f_{i0}^{(1)},0) \gap g^i + \sum_{0 \leqslant i < n} (0,\omega^{-i} v f_{i1}^{(2)}) g^i h,
\end{align*}
which implies that $f_{i0}^{(1)} = 0$ for all $1 \leqslant i < n$ and that $f_{i1}^{(1)} = 0 = f_{i1}^{(2)}$ for all $0 \leqslant i < n$. A similar calculation with $(0,v)$ in place of $(u,0)$ shows that $f_{i0}^{(2)} = 0$ for all $1 \leqslant i < n$, and so $c = (f_{00}^{(1)}, f_{00}^{(1)})$. It follows that $C = \Bbbk(u) \times \Bbbk(v)$, and so $(\Bbbk(u) \times \Bbbk(v)) \hash D_n$ is simple. Therefore $Q(\Bbbk_{-1}[u,v]/\langle uv \rangle) \hash D_n$ is simple, and so $\langle uv \rangle$ is a prime ideal of $T$. \\
\indent We now consider $\langle u^{2n} - v^{2n} \rangle$. Since $\langle u^2 - v^2 \rangle$ is a prime ideal of $\Bbbk_{-1}[u,v]$ \cite[Section 8]{irving1}, and $\langle u^{2n} - v^{2n} \rangle = \bigcap_{f \in D_n} f \cdot \langle u^2-v^2 \rangle$, it follows that $\langle u^{2n} - v^{2n} \rangle$ is a $G$-prime ideal of $A$, and so we wish to show that $T/\langle u^{2n} - v^{2n} \rangle \cong (A/ \langle u^{2n} - v^{2n} \rangle) \hash D_n$ is prime. Now, $T/\langle u^{2n} - v^{2n} \rangle$ is a $G$-prime ring and $\langle u^2-v^2 \rangle / \langle u^{2n}-v^{2n} \rangle$ is a minimal prime of $T/\langle u^{2n} - v^{2n} \rangle$ with stabiliser $H = \{1,h\}$, and so by \cite[Corollary 14.8]{passman}, it suffices to show that $ (\Bbbk_{-1}[u,v]/\langle u^2-v^2 \rangle) \hash H$ is prime (where here we recall that $H$ acts via $h \cdot u = v, \gap h \cdot v = u$). But this is established in Lemma \ref{primesforc}, and so $\langle u^{2n} - v^{2n} \rangle$ is a prime ideal of $T$.
\end{proof}

This allows us to prove that the remaining case is a maximal order, which completes the proof of Theorem \ref{maxorderthm}.

\begin{thm} \label{maxordercased}
The algebra $T = \Bbbk_{-1}[u,v] \hash D_n$ in case \emph{(iii)} is a maximal order.
\end{thm}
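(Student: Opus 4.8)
The plan is to follow the template established in the proofs of Theorems \ref{bmaxorder} and \ref{maxordercasec}, exploiting the preliminary work already assembled for case (iii). By Lemma \ref{AGprime}, $T = \Bbbk_{-1}[u,v] \hash D_n$ is a prime noetherian finitely generated $\Bbbk$-algebra which is Cohen--Macaulay of GK dimension $2$, and since $q = -1$ is a root of unity $T$ is module-finite over its centre and hence PI. Thus, by the discussion following Lemma \ref{height2}, it suffices to prove that $O_\ell(P) = T = O_r(P)$ for every height $1$ prime $P$ of $T$; the symmetry between $O_\ell$ and $O_r$ means I only need to argue one of these carefully.

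First I would dispose of the two distinguished primes. By Lemma \ref{primesford}, both $\langle uv \rangle$ and $\langle u^{2n} - v^{2n} \rangle$ are prime ideals of $T$, and one checks directly that $uv$ and $u^{2n} - v^{2n}$ are normal nonzerodivisors (each is $D_n$-semi-invariant, being fixed up to sign by $h$ and fixed by $g$). Consequently, if a height $1$ prime $P$ contains $uv$ then $\langle uv \rangle \subseteq P$ forces $P = \langle uv \rangle$ by height considerations, and $O_\ell(P) = T$ since $P$ is generated by a normal nonzerodivisor; the same applies to $\langle u^{2n} - v^{2n} \rangle$.

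The substantive case is a height $1$ prime $P$ containing neither $uv$ nor $u^{2n} - v^{2n}$. Writing $A'' = \Bbbk_{-1}[u^{\pm 1}, v^{\pm 1}][(u^{2n}-v^{2n})^{-1}]$ and $T'' = A'' \hash D_n = T[u^{-1}, v^{-1}, (u^{2n}-v^{2n})^{-1}]$, the prime $P$ corresponds to a height $1$ prime $PT''$ of $T''$. Here $T''$ is Azumaya: when $n$ is odd this is Proposition \ref{disazumayaodd}, and when $n$ is even $T' = A' \hash D_n$ is Azumaya by Proposition \ref{disazumayaeven}, so its central localisation $T''$ (obtained by inverting the central element $(u^{2n}-v^{2n})^2$) is Azumaya as well. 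By the Azumaya property, $PT'' = \mathfrak{p}T''$ for the height $1$ prime $\mathfrak{p} = PT'' \cap Z(T'')$, and since $Z(T'')$ is a UFD by Propositions \ref{propcentreodd} and \ref{propcentreeven}, $\mathfrak{p} = zZ(T'')$ is principal. As $z$ is a central nonzerodivisor, $\End_{T''}(zT'') = T''$, giving the chain
\begin{align*}
T \subseteq \End_T(P) \subseteq \End_{T''}(PT'') = \End_{T''}(zT'') = T''.
\end{align*}

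Finally, I would clear denominators exactly as in Theorem \ref{maxordercasec}: given $t'' \in \End_T(P) \subseteq T''$, choose $i \geq 0$ minimal with $t \coloneqq (uv(u^{2n}-v^{2n}))^i t'' \in T$, and suppose for contradiction that $i \geq 1$. Then $tP \subseteq \langle uv \rangle$ and $tP \subseteq \langle u^{2n}-v^{2n}\rangle$; since $P$ is contained in neither of these primes, primality forces $t \in \langle uv \rangle \cap \langle u^{2n}-v^{2n}\rangle = \langle uv(u^{2n}-v^{2n})\rangle$, contradicting minimality of $i$. Hence $t'' \in T$ and $\End_T(P) = T$, and the symmetric argument gives $O_r(P) = T$. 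I expect the only genuinely delicate points to be confirming the normality of the distinguished elements in the skew group ring and the identity $\langle uv\rangle \cap \langle u^{2n}-v^{2n}\rangle = \langle uv(u^{2n}-v^{2n})\rangle$; everything else is a bookkeeping exercise built on the already-established primality (Lemma \ref{primesford}), Azumaya (Propositions \ref{disazumayaodd}, \ref{disazumayaeven}), and UFD (Propositions \ref{propcentreodd}, \ref{propcentreeven}) results, with the parity of $n$ entering only through which preliminary proposition is invoked.
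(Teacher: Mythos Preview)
Your proposal is correct and follows essentially the same argument as the paper's proof: reduce to height~$1$ primes via Lemma~\ref{height2}, dispatch the two distinguished primes $\langle uv\rangle$ and $\langle u^{2n}-v^{2n}\rangle$ using Lemma~\ref{primesford} and normality of their generators, then for the remaining primes pass to the Azumaya localisation $T''$, use the UFD property of $Z(T'')$ from Propositions~\ref{propcentreodd} and~\ref{propcentreeven} to get a principal extension, and clear denominators by the minimal-$i$ contradiction. Your handling of the even-$n$ case is in fact slightly more explicit than the paper's, which cites Proposition~\ref{disazumayaeven} directly even though that result concerns $T'$ rather than $T''$; your observation that $T''$ is a central localisation of the Azumaya algebra $T'$ fills that small gap.
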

\begin{proof}
Again, we only show that $O_\ell(P) = T$ for all height $1$ primes which, as with Theorem \ref{maxordercaseb}, will be enough to prove the result. So suppose that $P$ is a height $1$ prime. We first remark that $\langle u \rangle = \langle v \rangle$ is not a prime ideal of $T$, since the quotient of $T$ by this ideal is isomorphic to $\Bbbk D_n$, which is not prime. If $P$ contains $u^{2n}-v^{2n}$, then by Lemma \ref{primesford} $P = \langle u^{2n}-v^{2n} \rangle$, and since $u^{2n}-v^{2n}$ is a normal nonzerodivisor, we have $O_\ell(P) = T$ in this case. Similarly, using Lemma \ref{primesford} again, if $P$ contains $uv$ then $P = \langle uv \rangle$ and $O_\ell(P) = T$. Now let $P$ be a height $1$ prime of $T$ not containing $u^{2n}-v^{2n}$ or $uv$, so that $P$ corresponds to a height 1 prime $PT''$ of $T'' \coloneqq \Bbbk_{-1}[u^{\pm 1}, v^{\pm 1}][(u^{2n}-v^{2n})^{-1}] \hash D_n$. As established in Propostions \ref{disazumayaodd} and \ref{disazumayaeven}, $T''$ is Azumaya, and so there exists a height 1 prime $\mathfrak{p}$ of $Z(T'')$ such that $PT'' = \mathfrak{p} T''$. But $Z(T'')$ is a UFD by Propositions \ref{propcentreodd} and \ref{propcentreeven}, so height 1 primes are principal, and so $\mathfrak{p} = z Z(T'')$ for some $z \in Z(T'')$. Since $z$ is a central nonzerodivisor, we have $\End_{T''}(zT'') = T''$, and therefore we have a chain of inclusions
\begin{align*}
T \subseteq \End_T(P) \subseteq \End_{T''}(PT'') = \End_{T''}(zT'') = T''.
\end{align*}
It remains to show that this forces $\End_T(P) = T$. To this end, let $t'' \in \End_T(P) \subseteq T''$ and choose $i \geqslant 0$ minimal such that $t \coloneqq (uv(u^{2n}-v^{2n}))^i t'' \in T$. We claim that $i=0$, forcing $t'' \in T$. Seeking a contradiction, suppose that $i \geqslant 1$; then 
\begin{align*}
tP \subseteq (uv(u^{2n}-v^{2n}))^i t'' P \subseteq (uv(u^{2n}-v^{2n}))^i P \subseteq \langle uv \rangle.
\end{align*}
Since $P \nsubseteq \langle uv \rangle$ and, by Lemma \ref{primesforc}, $\langle uv \rangle$ is a prime ideal of $T$, we find that $t \in \langle uv \rangle$; similarly, $t \in \langle u^{2n}-v^{2n} \rangle$. Therefore $t \in \langle uv \rangle \cap \langle u^{2n}-v^{2n} \rangle = \langle uv(u^{2n}-v^{2n}) \rangle$, contradicting minimality of $i$. Hence $t'' \in T$, and so $\End_T(P) = T$. \\
\indent It follows that every nonzero prime ideal of $P$ of $T$ satisfies $O_\ell(P) = T$, and similarly also satisfies $O_r(P) = T$, and so $T$ is a maximal order.
\end{proof}

\section{A proof of Auslander's Theorem for quantum Kleinian singularities}
Using the fact that, for a quantum Kleinian singularity $A^G$, the algebra $A \hash G$ is a maximal order, one can show that Auslander's Theorem holds for these algebras. We use the following quite general result:

\begin{lem} \label{endoringlem}
Let $R$ be a prime Goldie maximal order and let $e \in R$ an idempotent. Then $\End_{eRe}(Re) \cong R$. 
\end{lem}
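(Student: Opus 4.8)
The plan is to show that the natural left-multiplication map $\lambda\colon R \to \End_{eRe}(Re)$, $\lambda(r)(xe) = rxe$, is an isomorphism; we may assume $e \neq 0$, since otherwise the statement is vacuous. This $\lambda$ is a well-defined ring homomorphism because left multiplication by $R$ commutes with the right action of $eRe$ on $Re$. For injectivity I would observe that $\ker\lambda = \{r \in R : rRe = 0\}$, and that if $rRe = 0$ then $(RrR)(ReR) = RrReR = 0$; since $e \in ReR$ we have $ReR \neq 0$, so primeness of $R$ forces $RrR = 0$ and hence $r = 0$.

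The heart of the matter is surjectivity, and this is where the Goldie and maximal order hypotheses enter. First I would embed $S \coloneqq \End_{eRe}(Re)$ into the simple artinian Goldie quotient ring $Q \coloneqq Q(R)$. Since $R$ is prime Goldie and $e \neq 0$, the corner ring $eRe$ is again prime Goldie with Goldie quotient ring $eQe$, and $Qe$ is the associated module of quotients of the torsion-free right $eRe$-module $Re$; that is, $Qe \cong Re \otimes_{eRe} eQe$. Consequently every $\phi \in S$ extends uniquely to an $eQe$-linear endomorphism of $Qe$. Because $Q$ is simple we have $QeQ = Q$, so $e$ is a full idempotent in $Q$ and the double centralizer theorem gives $\End_{eQe}(Qe) \cong Q$ via left multiplication. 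Restricting back, each $\phi$ is left multiplication by a unique $q \in Q$ with $qRe \subseteq Re$, which identifies $S$ with the subring $\{q \in Q : qRe \subseteq Re\}$ of $Q$ containing $R$. I expect this identification — in particular the fact that $Qe$ is the module of quotients of $Re$ over $eRe$ — to be the main technical obstacle, since it rests on the localisation theory of Goldie corner rings rather than on anything special to our setting.

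With $S$ realised inside $Q$, the maximal order hypothesis finishes the argument quickly. For $q \in S$ we have $qRe \subseteq Re$, and multiplying on the right by $R$ gives $q\,ReR \subseteq ReR$, so $q \in O_\ell(ReR)$. Now $ReR$ is a nonzero two-sided ideal of $R$, so by Lemma \ref{maxorderlem} we have $O_\ell(ReR) = \End_R(ReR) = R$, whence $q \in R$. Thus $S \subseteq R$, and combined with the inclusion $R \subseteq S$ coming from $\lambda$ this gives $S = R$. Alternatively, one can bypass $O_\ell(ReR)$ and argue straight from the definition of a maximal order: since $e = e^2 \in Re$ we get $Se \subseteq Re \subseteq R$, so $1\cdot S\cdot e \subseteq R$ with $1, e$ nonzero; as $R \subseteq S \subseteq Q$ makes $S$ an order equivalent to $R$, maximality of $R$ forbids the proper containment $R \subsetneq S$. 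Either way $\lambda$ is an isomorphism, as required.
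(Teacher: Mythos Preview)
Your proposal is correct and follows essentially the same route as the paper's proof: embed $R$ into $S = \End_{eRe}(Re)$ by left multiplication, use that $eQe = Q(eRe)$ and $Qe \cong Re \otimes_{eRe} eQe$ to extend each $\phi \in S$ uniquely to an element of $\End_{eQe}(Qe) \cong Q$, and then invoke the maximal order hypothesis via $Se \subseteq R$ to force $S = R$. Your alternative finish through $O_\ell(ReR)$ is a minor and equally valid variation; note only that Lemma~\ref{maxorderlem} is stated for noetherian rings, but the equivalence (1)$\Leftrightarrow$(2) you need holds for prime Goldie rings as the paper itself remarks.
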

\begin{proof}
Since $R$ is prime, we have an embedding of $R$ into $\End_{eRe}(Re)$, where each element of $R$ gives rise to an endomorphism via left multiplication. Let $Q$ be the Goldie quotient ring of $R$. Then,  by \cite[Theorem 3]{small}, $eQe$ is the Goldie quotient ring of $eRe$. Since $Q$ is simple, we have $QeQ = Q$, and we claim that in fact $ReQ = Q$. Indeed, since $QeQ = Q$, there exist $r_i, s_i \in R$ and regular elements $x_i, y_i \in R$ such that $\sum_{i=1}^n x_i^{-1} r_i e y_i^{-1} s_i = 1$, and we may as well assume we have a common denominator and write $\sum_{i=1}^n x^{-1} r_i e y_i^{-1} s_i = 1$ for some regular element $x \in R$. Then, pre- and post-multiplying by $x$ and $x^{-1}$, respectively,
\begin{align*}
1 = \sum_{i=1}^n r_i e y_i^{-1} s_i x^{-1} \in ReQ,
\end{align*}
so that $ReQ = Q$. Hence 
\begin{align}
Qe = ReQe \cong Re \otimes_{eRe} eQe \cong Re \otimes_{eRe} Q(eRe). \label{injhullRe}
\end{align}
Since $Qe$ is torsionfree as a right $eQe$-module, $Qe$ and $Re$ are both torsionfree over $eRe$. From (\ref{injhullRe}) and \cite[Exercise 7F]{gw}, we deduce that $Qe$ is the $eRe$-injective hull of $Re$. Now consider any $f \in \End_{eRe}(Re)$, and view it as a map $Re \to Qe$. Since $Qe$ is an injective right $eRe$-module, $f$ lifts to a morphism $f' : Qe \to Qe$, and we claim that this lifting is unique. So suppose that $f', f''$ are two such lifts, and let $qe \in Qe$. By essentially the same proof as that of \cite[Lemma 4]{small}, there exists a regular element $ere \in eRe$ such that $qere = qe \cdot ere \in Re$. Then
\begin{align*}
(f'(qe)-f''(qe)) ere = f'(qere)-f''(qere) = 0.
\end{align*}
Since $Qe$ is $eRe$-torsionfree, this implies that $f'(qe) = f''(qe)$, and so $f' = f''$. Thus, given any $f \in \End_{eRe}(Re)$, we can lift to a unique $f' \in \End_{eRe}(Qe)$, so we get an embedding $\End_{eRe}(Re) \subseteq \End_{eRe}(Qe)$. We also have $\End_{eRe}(Qe) = \End_{eQe}(Qe)$ by \cite[Exercise 10H (b)]{gw}, where this last endomorphism ring is isomorphic to $Q$ since $Q$ and $eQe$ are Morita equivalent. Therefore we have inclusions $R \subseteq \End_{eRe}(Re) \subseteq Q$. But if $q \in \End_{eRe}(Re)$ then $qe \in Re \subseteq R$ so that $ \End_{eRe}(Re) \cdot e \subseteq R$. Therefore $R$ and $\End_{eRe}(Re)$ are equivalent orders in $Q$, and since $R$ is a maximal order, we must have $\End_{eRe}(Re) = R$.
\end{proof}

We now use the above lemma to show that Auslander's Theorem holds for the algebras $A \hash G$. This result was established in \cite[Theorem 4.1]{ckwz} but using very different techniques. We remark that our proof can be adapted to show that Auslander's Theorem holds for PBW deformations of the algebras $A \hash G$, which can be defined as in \cite{walton}. A proof of this result does not appear to be possible using the techniques of \cite{ckwz}.

\begin{thm} \label{auslandercor}
Suppose that $A^G$ is a quantum Kleinian singularity. Then
\begin{align*}
\End_{A^G}(A) \cong A \hash G.
\end{align*}
\end{thm}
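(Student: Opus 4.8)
The plan is to deduce the result directly from Lemma~\ref{endoringlem}, applied to $R = A \hash G$ together with the averaging idempotent $e = \tfrac{1}{|G|}\sum_{g \in G} \overline{g}$. This is a genuine idempotent of $T = A \hash G$ precisely because $\Bbbk$ has characteristic zero, so that $|G|$ is invertible and $\overline{g}\,e = e$ for every $g \in G$ (left multiplication by $\overline{g}$ merely permutes the basis $\overline{G}$ occurring in $e$). First I would verify the hypotheses of Lemma~\ref{endoringlem}: by Lemma~\ref{AGprime} the ring $R$ is prime and noetherian, hence prime Goldie, and by Theorem~\ref{maxorderthm} it is a maximal order. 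The lemma then supplies a ring isomorphism $\End_{eRe}(Re) \cong R$, so the task reduces to identifying the corner ring $eRe$ with $A^G$ and the module $Re$ with $A$ in a compatible way.

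For the corner ring I would invoke the isomorphism $eRe \cong A^G$ already used in Corollary~\ref{invmaxorder}, coming from \cite[Lemma 3.1]{bhz}; under it an invariant $a \in A^G$ corresponds to $eae$, and a short computation with the definition of $e$ gives $eae = ae = ea$ in this case. For the module, the relation $\overline{g}\,e = e$ immediately yields $Re = Ae$, and the map $A \to Re$, $b \mapsto be$, is a bijection: injectivity holds because $be = \tfrac{1}{|G|}\sum_{g \in G} b\,\overline{g}$ has coefficient $\tfrac{1}{|G|}b$ at the identity basis element. It remains to check that this bijection is an isomorphism of right $eRe$-modules when $A$ carries its natural right $A^G$-module structure (consistent with our standing convention of working with right modules). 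The relevant identity is, for $a \in A^G$ and $b \in A$, that $(be)(eae) = b\,e\,a\,e = b(eae) = b(ae) = (ba)e$, using $e^2 = e$ and $eae = ae$. Hence right multiplication by $a$ on $A$ corresponds to the right $eRe$-action on $Re$, so that $Re \cong A$ as right $eRe$-modules.

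Combining the two identifications produces a ring isomorphism $\End_{eRe}(Re) \cong \End_{A^G}(A)$, and composing this with Lemma~\ref{endoringlem} gives $\End_{A^G}(A) \cong A \hash G$, as required. I expect no serious obstacle here, since the substantial content has already been carried out: the primeness and noetherianity of $A \hash G$ (Lemma~\ref{AGprime}), the maximal order property established case by case in the previous section (Theorem~\ref{maxorderthm}), and the general corner-ring statement Lemma~\ref{endoringlem} itself. The only remaining work is the elementary bookkeeping needed to transport the right $eRe$-module structure on $Re$ to the right $A^G$-module structure on $A$ under $b \mapsto be$.
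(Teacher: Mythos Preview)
Your proposal is correct and follows essentially the same approach as the paper: apply Lemma~\ref{endoringlem} to $R = A \hash G$ with the averaging idempotent $e$, using Lemma~\ref{AGprime} and Theorem~\ref{maxorderthm} for the hypotheses and the identifications $eRe \cong A^G$, $Re \cong A$ from \cite[Lemma~3.1]{bhz}. The only difference is that you spell out the verification that $Re \cong A$ as right $eRe$-modules, whereas the paper simply cites this.
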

\begin{proof}
The ring $R = A \hash G$ is a prime noetherian maximal order by Lemma \ref{AGprime} and Theorem \ref{maxorderthm}. Moreover, $e = \tfrac{1}{|G|}\sum_{g \in G} g$ is an idempotent with $eRe \cong A^G$ and $Re \cong A$ as an $(A \hash G, A^G)$-bimodule by \cite[Lemma 3.1]{bhz}. The result then follows from Lemma \ref{endoringlem}.
\end{proof}


\bibliographystyle{amsalpha}

\providecommand{\bysame}{\leavevmode\hbox to3em{\hrulefill}\thinspace}
\providecommand{\MR}{\relax\ifhmode\unskip\space\fi MR }
\providecommand{\MRhref}[2]{%
  \href{http://www.ams.org/mathscinet-getitem?mr=#1}{#2}
}
\providecommand{\href}[2]{#2}

\end{document}